\newcommand{\norm}[1]{\left\Vert#1\right\Vert}
\newcommand{\abs}[1]{\left\vert#1\right\vert}
\newcommand{\R}{\mathbb R}
\newcommand{\D}{\partial}
\newcommand{\eps}{\varepsilon}
\newcommand{\dv}{\mathrm{div}\,}
\newcommand{\dm}{{\mathscr{D}_t}}
\newcommand{\idm}{{\mathscr{D}}}
\newcommand{\nb}{\nabla}
\newcommand{\bnb}{\overline{\nb}}
\newcommand{\tr}{\mathrm{tr}\,}
\newcommand{\curl}{\mathrm{curl}\,}
\newcommand{\dist}{\mathrm{dist}\,}
\newcommand{\ls}{\leqslant}
\newcommand{\gs}{\geqslant}
\newcommand{\N}{\mathcal{N}}
\newcommand{\vol}{\mathrm{Vol}\,}
\newcommand{\sgn}{\mathrm{sgn}}
\newcommand{\no}{\nonumber}
\newcommand{\K}{\mathcal{K}}
\newcommand{\E}{\mathcal{E}}
\newcommand{\T}{\mathscr{T}}
\newcommand{\F}{\mathbb{F}}
\newcommand{\detail}[1]{#1}%
\numberwithin{equation}{section}
\newtheorem{theorem}{Theorem}[section]
\newtheorem{lemma}[theorem]{Lemma}
\theoremstyle{definition}
\newtheorem{definition}[theorem]{Definition}
\theoremstyle{remark}
\begin{document}


\title[Free boundary problem of incompressible elastodynamics]
{A Priori Estimates for the Free Boundary Problem of Incompressible Neo-Hookean Elastodynamics}%

\author{Chengchun Hao}
\address{Institute of Mathematics,
Academy of Mathematics and Systems Science,
and Hua Loo-Keng Key Laboratory of Mathematics,
Chinese Academy of Sciences,
Beijing 100190,  and BCMIIS, Beijing 100048,  China}
\email{hcc@amss.ac.cn}

\author{Dehua Wang}
\address{Department of Mathematics, University of Pittsburgh,
                           Pittsburgh, PA 15260, USA.}
\email{dwang@math.pitt.edu}



\begin{abstract}
A free boundary problem for the incompressible neo-Hookean elastodynamics   is studied in two and three spatial dimensions. The \textit{a priori} estimates  in Sobolev norms of solutions  with the physical vacuum condition are established through a geometrical point of view of \cite{CL00}.  
Some estimates on  the second fundamental form and  velocity of the free surface are also obtained.
\end{abstract}

\keywords{Incompressible elastodynamics, free boundary, \textit{a priori} estimates, second fundamental form, velocity}

\subjclass[2000]{35A05, 76A10, 76D03.}
\date{December 31, 2013}
\maketitle


\section{Introduction}

We are concerned with the motion of neo-Hookean elastic waves in an incompressible material 
for which the deformation or strain   is proportional to the  stress.
Precisely, we consider the free boundary problem of the following incompressible elastodynamic equations  of neo-Hookean elastic materials:
\begin{subequations}\label{eld1}
  \begin{align}
    & v_t+ v\cdot\D  v+\D p=\dv(FF^\top),   \label{eld1.1}\\
    & F_t+ v\cdot \D F=\D v F , \label{eld1.2}\\
    &\dv  v=0,  \quad \dv F^\top=0,\label{eld1.3}\end{align}
\end{subequations}
in a set $$\idm=\underset{0\ls t\ls T}{\bigcup}\{t\}\times \idm_t,$$ 
where $\idm_t\subset \R^n, n=2 \text{ or } 3$, is the domain that the material  occupies at time $t\in [0,T]$
  for some $T>0$ ; where $\D=(\D_1,\cdots,\D_n)$ and $\dv$ are the usual gradient operator and spatial divergence in the Eulerian coordinates with   $\D_i=\D/\D{x^i}$, respectively;
 $v(t,x)=(v_1(t,x),\cdots, v_n(t,x))$ is the velocity vector field of the fluid, $p(t,x)$ is the pressure, 
 $F(t,x)=(F_{ij}(t,x))$ is the deformation tensor,  $F^\top=(F_{ji})$ denotes the transpose of the $n\times n$ matrix 
 $F$, $FF^\top$ is the Cauchy-Green tensor in the case of neo-Hookean elastic materials (cf. \cite{Lei12,ST07});
 and the notations 
$(\D v)_{ij}=\D_jv_i$, $(\D vF)_{ij}= (\D vF)^{ij}=(\D v)_{ik}F^{kj}=\D_kv^iF^{kj}$, $\dv v=\D_i v^i$, 
$(\dv F^\top)^i=\D_jF^{ji}$  follow  the Einstein summation convention: $v^i=\delta^{ij}v_j=v_i$ and $F^{ij}=\delta^{ik}\delta^{jl}F_{kl}=F_{ij}$. 
The boundary conditions on the free boundary:
 $$\D\idm=\underset{0\ls t\ls T}{\bigcup}\{t\}\times \D\idm_t$$
are prescribed as the following:
\begin{subequations}\label{eld1'}
  \begin{align}
   p=&0 \;\text{ on }  \D\idm, \label{eld1.5}\\
 \N\cdot F^\top=&0 \;\text{ on } \D\idm,\label{eld1.4}\\
   (\D_t+v\cdot \D)|_{\D\idm}&\in T(\D\idm), \label{eld1.5'}
  \end{align}
\end{subequations}
where $\N(t,x)$ is the exterior unit normal to the free surface $\D\dm$ and $T(\D\idm)$ is the tangential space to $\D\idm$. The boundary condition \eqref{eld1.5} implies that the pressure $p$ vanishes outside the domain,   \eqref{eld1.4} indicates that the normal component of $F^\top$ (i.e., $\N_kF^{kj}$) vanishes on the boundary, and   \eqref{eld1.5'} means that the free boundary moves with the velocity $v$ of the material particles, i.e., $v\cdot\N=\kappa$ on $ \D\idm_t$ with $\kappa$ the normal velocity of $ \D\idm_t$.

For a simply connected bounded domain $\idm_0\subset \R^n$  that is homeomorphic to the unit ball, and the initial data $(v_0(x), F_0(x))$ satisfying the constraint \eqref{eld1.3}:
$\dv  v_0=0,  \; \dv F_0^\top=0$, we shall establish  \textit{a priori} estimates for the set $\idm\subset [0, T]\times \R^n$ and the vector fields $v$ and $F$ solving \eqref{eld1}-\eqref{eld1'} with the initial conditions:
\begin{equation}\label{initialcondition}
\{x: (0, x)\in \idm\}=\idm_0, \quad  ( v, F)|_{t=0}=( v_0(x), F_0(x))  \text{ for } x\in \idm_0.
\end{equation}
We will study the  free boundary problem \eqref{eld1}-\eqref{initialcondition} under the following natural condition (cf. \cite{bealhou,CL00,Coutand,Ebin,HLarma,L1,L2,LN,SZ,wu1,wu2,zhang}):
\begin{align}\label{eq.phycond}
  \nb_\N p\ls -\eps<0 \text{ on } \D\dm,
\end{align}
where $\nb_\N=\N^i\D_i$ and $\eps>0$ is a constant.
We  assume that  \eqref{eq.phycond} holds initially, and will verify that it still holds   within a time period.
Roughly speaking,   the elastic body will not break up in the interior since the pressure is positive, 
the boundary moves according to the velocity, and the boundary is the level set of the pressure that,  together with the Cauchy-Green tensor, determines the acceleration, thus the regularity of the boundary in quite involved, which is a difficult issue for this problem.

There have been some results for the free surface problem of the incompressible Euler equations of fluids in the recent decades, see for examples \cite{AM,CL00,Coutand,Ebin,L1,L2,LN,SZ,wu1,wu2,zhang} and the references therein. 
For elastodynamics, there have been some studies on the fixed boundary problems, see for examples 
 Ebin \cite{Ebin93,Ebin96} for the global existence of small solutions to the three-dimensional incompressible and isotropic elasticity equations  and the special case of incompressible neo-Hookean materials, and Sideris-Thomases  \cite{ST05,ST07} for the global existence of the three-dimensional incompressible elasticity.
 In this paper, we shall prove the \textit{a priori} estimates for the free boundary problem \eqref{eld1}-\eqref{initialcondition}  in all physical spatial dimensions $n=2,3$ by adopting a geometrical point of view used in Christodoulou-Lindblad \cite{CL00} and establishing estimates on quantities such as the second fundamental form and the velocity of the free surface.

Define the material derivative by $D_t=\D_t+v^k\D_k$. We rewrite the system \eqref{eld1} as
\begin{subequations}\label{eld11}
  \begin{align}
    & D_t  v_i+\D_i p=\D_kF_{ij}F^{kj},  && \text{ in } \idm, \label{eld11.1}\\
    & D_t F_{ij}=\delta^{kl}\D_k v_iF_{lj}, && \text{ in } \idm, \label{eld11.2}\\
    &\D_i  v^i=0,  \quad \D_jF^{ji}=0 , && \text{ in } \idm.\label{eld11.3}\end{align}
\end{subequations}
From \eqref{eld11}, one has
\begin{align} \label{ener}
\frac{1}{2}\frac{d}{dt}\int_{\idm_t}(\abs{v}^2+\abs{F}^2)dx=-\int_{\D\idm_t}pv^i\N_idS+\int_{\D\idm_t} F_{ij}F^{kj}v^i\N_k dS,
\end{align}
where $dS$ is the surface measure. We see that \eqref{ener} and the boundary conditions \eqref{eld1'} yield  the conserved physical energy: 
\begin{align}\label{energy.phys}
E_0(t)=\int_{\idm_t}\left(\frac{1}{2}\abs{v(t,x)}^2+\frac{1}{2}\abs{F(t,x)}^2\right)dx.
\end{align}
Note that the identities $\dv F^\top=0$ in $\idm$ and $\N\cdot F^\top=0$ on $\D\idm$ are preserved, that is, they hold if $\dv F_0^\top=0$ in $\idm_0$ and $N\cdot F_0^\top=0$ on $\D\idm_0$ for initial data,  where $N$ denotes the exterior unit normal to the initial interface $\D\idm_0$, which will be verified later in the Lagrangian coordinates.

The higher order energy norm has a boundary part and an interior part.  Following the definitions and notations of \cite{CL00},  we define the boundary part   through the orthogonal projection to the tangent space of the boundary.
The orthogonal projection $\Pi$ to the tangent space of the boundary of a $(0, r)$ tensor $\alpha$ is defined as the projection of each component in the normal direction, that is,
\begin{equation}\label{projection}
(\Pi \alpha)_{i_1\cdots i_r}=\Pi_{i_1}^{j_1}\cdots \Pi_{i_r}^{j_r} \alpha_{j_1\cdots j_r},  
\end{equation}
where $\Pi_i^j=\delta_i^j-{\N}_i{\N}^j$ with the convention ${\N}^j=\delta^{ij} {\N}_i={\N}_j$.

Denote  by $\bar \partial_i=\Pi_i^j \partial_j$   a tangential derivative. Then we see that for  $q=0$ on $\partial \idm_t$, one has $\bar \partial_i q=0$ on $\partial \idm_t$ and
\begin{equation}\label{esff1}
(\Pi \partial^2 q)_{ij}=\theta_{ij} \nabla_{\N} q,
\end{equation}
where ${\theta}_{ij}=\bar \partial_i \N_j$ is the second fundamental form of $\partial \idm_t$.
 
Consider the following  positive definite quadratic form $Q$ of the form (see \cite{CL00}):
\begin{equation}\label{quadratic form}
Q(\alpha, \beta)=q^{i_1j_1}\cdots q^{i_rj_r}\alpha_{i_1\cdots i_r}\beta_{j_1\cdots j_r},
\end{equation}
where
\begin{equation}
q^{ij}=\delta^{ij}-\eta(d)^2\N^i\N^j, \quad d(x)=\dist(x, \D\idm_t), \quad\text{and } \N^i=-\delta^{ij}\D_j d, \end{equation}
and     $\eta$ is a smooth cutoff function satisfying 
$$0\ls \eta(d)\ls 1, \quad 
\eta(d)= 1 \text{ for } d < \frac{d_0}4,\quad
\eta(d)=0 \text{ for }  d > \frac{d_0}2,$$
 with $d_0$   a fixed number  less than
the injectivity radius of the normal exponential map $\iota_0$ which is the largest
number $\iota_0$ such that the map
\begin{equation}
 \D\idm_t\times (-\iota_0, \iota_0)\to \{x\in \R^n:  \dist(x,\  \D\idm_t) < \iota_0\},
\end{equation}
defined by
$$(\bar x, \iota)\to x=\bar x+\iota \N(\bar x),$$
is an injection.
The quadratic form $Q$  is the inner product of the tangential components when restricted to the boundary:
 $Q(\alpha, \beta)=\langle\Pi \alpha, \Pi \beta\rangle$,  and  $Q(\alpha, \alpha)=|\alpha|^2$ in the interior.

Let $\sgn(s)$ be the sign  function of the real number $s$. Denote 
$$(\curl F^\top)_{ijk}:=\D_iF_{jk}-\D_jF_{ik}$$ and $\vartheta=(-\nabla_{\N} p)^{-1}$. Then, we define the higher order energies for $r\gs 1$ as:
\begin{align}\label{ereen}
E_r(t)=&\int_{\idm_t}\left(\delta^{ij}Q(\D^r v_i, \D^r v_j)+\delta^{ij}\delta^{km}Q(\D^r F_{ik}, \D^r F_{jm})\right)dx\no\\
       &+\int_{\idm_t}\left(|\D^{r-1}\curl v|^2+|\D^{r-1}\curl F^\top|^2\right)dx\no\\
       &+\sgn(r-1)\int_{\D\idm_t}Q(\D^r p, \D^r p)\vartheta dS.
\end{align}
The higher order energy norm has a boundary part (for $r\gs 2$)  which controls the norms of the second fundamental form of the free surface, and   an interior part which  controls the norms of the velocity  and thus the pressure. We will prove that the time derivatives of the energy norms are controlled by themselves. 
One advantage of  the above higher order energy norms is that the time derivatives of the interior parts yield    some boundary terms which have some cancellation with
the leading-order  terms in the   time derivatives of the boundary integrals.

Now, we can state the main result of this paper as follows.
\begin{theorem} \label{MAIN}
Let
$$\K(0)=\max\left(\norm{\theta(0,\cdot)}_{L^\infty(\D\idm_0)}, \frac{1}{\iota_0(0)}\right)$$ and 
  $$\E(0)=\norm{\frac{1}{\nb_N p(0,\cdot)}}_{L^\infty(\D\idm_0)}.
$$
Then, there exists a continuous function $\T>0$ such that if
  \begin{align*}
    T\ls \T(\K(0),\E(0),E_0(0),\cdots, E_{n+1}(0),\vol \idm_0 ),
  \end{align*}
  any smooth solution of the free boundary problem   \eqref{eld1}-\eqref{eq.phycond} satisfies
  \begin{align*}
    \sum_{s=0}^{n+1} E_s(t)\ls 2\sum_{s=0}^{n+1} E_s(0), \quad 0\ls t\ls T.
  \end{align*}
\end{theorem}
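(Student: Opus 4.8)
The plan is to run a Gronwall-type argument on the total energy $\mathcal{E}(t)=\sum_{s=0}^{n+1}E_s(t)$, showing $\frac{d}{dt}\mathcal{E}(t)\ls C(\mathcal{E}(t),\mathcal{K}(t),\E(t),\vol\idm_0)$ as long as the auxiliary quantities $\mathcal{K}(t)$ (controlling $\norm{\theta}_{L^\infty}$ and $1/\iota_0$) and $\E(t)$ (controlling $1/|\nb_\N p|$) remain comparable to their initial values. First I would set up the Lagrangian formulation: introduce the flow map $x=x(t,y)$ with $\D_t x=v$, $x(0,\cdot)=\mathrm{id}$ on $\idm_0$, so that the moving domain becomes fixed and $D_t$ becomes $\D_t$ in $y$-coordinates; in these coordinates one checks that $\dv F^\top=0$ and $\N\cdot F^\top=0$ are propagated (as asserted in the excerpt) and that $p$ solves an elliptic equation $\Delta p = -(\D_i v^k)(\D_k v^i) + \D_i\D_k(F_{ij}F^{kj})$ with $p=0$ on $\D\idm_t$, obtained by taking $\dv$ of \eqref{eld11.1} and using $\dv v=0$, $\dv F^\top=0$. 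This elliptic problem, together with the physical sign condition \eqref{eq.phycond}, is what lets the pressure and its derivatives be estimated in terms of $v$, $F$, $\theta$, and $\vartheta=(-\nb_\N p)^{-1}$.

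Next I would differentiate $E_r(t)$ in time for each $r\le n+1$. For the interior terms $\int_{\idm_t}\delta^{ij}Q(\D^r v_i,\D^r v_j)\,dx$ and the $F$-analogue, applying $\D^r$ to \eqref{eld11.1}–\eqref{eld11.2} and commuting $\D^r$ past $D_t$ produces (i) a main term involving $\D^r\D p$ paired against $\D^r v$, which after integration by parts and use of $p=0$ on $\D\idm_t$ yields a boundary integral $\int_{\D\idm_t}Q(\D^r p,\D^r p)\,\vartheta\,dS$-type contribution that is designed to cancel the leading piece coming from differentiating the boundary integral in $E_r$ (this is the "one advantage" noted in the excerpt); (ii) commutator terms $[\D^r,v\cdot\D]v$, $[\D^r,v\cdot\D]F$, and terms where $\D^r$ hits the coefficients in $\D_k F_{ij}F^{kj}$ and $\D_k v_i F_{lj}$, all of which are lower order and bounded by $\mathcal{E}(t)$ times lower Sobolev norms via product/commutator estimates and Sobolev embedding in dimension $n\le 3$; and (iii) terms where $D_t$ hits the coefficients $q^{ij}$ in the quadratic form $Q$, which involve $D_t\N$, $D_t d$, hence the second fundamental form and the velocity of the free surface — these are controlled by $\mathcal{K}(t)$ together with the geometric estimates the paper has developed for $\theta$ and $\kappa$. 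The curl terms $\int_{\idm_t}(|\D^{r-1}\curl v|^2+|\D^{r-1}\curl F^\top|^2)\,dx$ are handled separately by noting from \eqref{eld11} that $D_t\curl v$ and $D_t\curl F^\top$ satisfy equations with no pressure contribution (the $\D p$ term is a gradient, so curl-free), giving transport-type estimates with right-hand sides again bounded by $\mathcal{E}(t)$.

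For the boundary term, differentiating $\int_{\D\idm_t}Q(\D^r p,\D^r p)\,\vartheta\,dS$ requires the transport formula for surface integrals (bringing in $\dv v$ on the tangent space, i.e. again $\theta$ and $\kappa$), an evolution equation for $\vartheta$ obtained from $D_t\nb_\N p$, and an evolution equation for $\D^r p$ obtained by applying $\D^r$ to the elliptic equation for $p$ and to $D_t p$ (using \eqref{eld11.1} to write $D_t p$ in terms of $v$, $F$, $p$). The crucial structural cancellation is that the worst term here — of the form $\int_{\D\idm_t}(\nb_\N\D^r p)(\text{tangential part of }\D^r v)\,dS$ or similar — matches the boundary term produced in step two from the interior energy, so that only terms controllable by $\mathcal{E}$, $\mathcal{K}$, $\E$ survive. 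Throughout, $\D^r p$ on the boundary is related to $\D^{r-1}\theta\cdot\nb_\N p$ via the identity \eqref{esff1} and its higher-order versions, which is how the boundary energy controls $\norm{\theta}_{H^{r-3/2}}$ and closes the loop with $\mathcal{K}(t)$.

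Finally I would combine these: choosing $r$ up to $n+1$ ensures (by Sobolev embedding, since $n+1 > n/2 + 1$ for $n\le 3$) that $\norm{v}_{C^1}$, $\norm{F}_{C^1}$, $\norm{\theta}_{L^\infty}$, and the Lagrangian map's regularity are all controlled by $\mathcal{E}(t)$, so the auxiliary quantities satisfy their own differential inequalities $|\frac{d}{dt}\mathcal{K}(t)|\ls C(\mathcal{E})$, $|\frac{d}{dt}\E(t)|\ls C(\mathcal{E},\E)$ — in particular $\iota_0(t)$ stays bounded below and $-\nb_\N p$ stays bounded below by a positive constant on $[0,T]$, verifying a posteriori that \eqref{eq.phycond} persists. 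A continuity/bootstrap argument then gives a time $\T$, depending only on $\mathcal{K}(0),\E(0),E_0(0),\dots,E_{n+1}(0),\vol\idm_0$, on which $\mathcal{E}(t)\ls 2\mathcal{E}(0)$. I expect the main obstacle to be the precise bookkeeping of the boundary cancellation in step three: one must identify the leading-order boundary term from $\frac{d}{dt}$ of the interior energy and the one from $\frac{d}{dt}$ of the boundary energy, show they cancel \emph{including} the $\vartheta$ weight and the surface-measure variation, and verify that every residual term is genuinely lower order — this is where the neo-Hookean structure ($\dv F^\top=0$, $\N\cdot F^\top=0$, and the specific form of $\dv(FF^\top)$) must be used to prevent a derivative loss from the elastic stress that is absent in the pure Euler case of \cite{CL00}.
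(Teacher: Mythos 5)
Your proposal follows essentially the same route as the paper: the Lagrangian reformulation with propagation of $\dv F^\top=0$ and $N\cdot\F^\top=0$, the elliptic equation for $p$ from taking the divergence of the momentum equation, the $r$-th order energies with projected derivatives, curl parts, and the $\vartheta$-weighted boundary integral whose leading term cancels the boundary contribution from the interior part, the use of $\Pi\nb^2p=\theta\,\nb_Np$ to control the second fundamental form, and finally the differential inequalities for $E_r$, $\K$, $\E$ closed by a Gronwall/bootstrap argument on a short time interval. This matches the paper's Sections 2--5, so the approach is correct and essentially identical.
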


We remark that Theorem \ref{MAIN} extends the result of \cite{CL00} for the Euler equations of incompressible flow to the elastodynamics \eqref{eld1}.  Our proof will be based on the geometric point of view following \cite{CL00}. We   need to develop   new ingredients in the proof to handle the deformation $F$ and the interaction with the velocity $v$, which requires some new thoughts.
For the well-posedness of incompressible Euler equations we refer the readers to \cite{L1,L2} and the references therein.
The well-posedness of the elastodynamics \eqref{eld1} is much harder. 
In this paper we shall explore all the symmetries of the equations and then we will be able to establish the  sharp \textit{a priori} estimates. Although the well-posedness does not follow directly, these estimates are crucial for  
the local existence of smooth solution for the system \eqref{eld1}  which could be possibly obtained by improving the estimates of this paper together with the Nash-Moser technique. 

The rest of the paper is organized as follows. In Section \ref{sec.Lag}, we reformulate the problem  to a fixed initial-boundary value problem in the Lagrangian coordinates. The Lagrangian transformation induces a Riemannian metric on $\idm_0$, for which we recall the time evolution properties derived in \cite{CL00} and prove some new identities which will be used later. Section \ref{sec.1order} is devoted to the first order energy estimates. In Section \ref{sec.rorder}, we derive the higher order energy estimates using the identities derived in Section \ref{sec.Lag}.  We justify the \textit{a priori} assumptions in Section \ref{sec.justifi}.  For the sake of completeness and the convenience of the readers we add an appendix  to state some estimates which are used in this paper but were already proved in \cite{CL00}.

\bigskip

\section{Reformulation in Lagrangian Coordinates}\label{sec.Lag}

In this section, we shall introduce the Lagrangian   coordinates to reformulate the free boundary problem to fix the boundaries.

Following the same terminology and lines of \cite{CL00},  we present the transformation between the Eulerian coordinates $(t,x)$ and the Lagrangian coordinates $(t,y)$. 
For a velocity vector field $v(t,x)$   in $\idm\subset [0,T]\times\R^n$ with
$(1, v)\in T(\D\idm)$ (i.e., the boundary  $\D\dm$ moves with the velocity $v$),
define  $x=x(t,y)=f_t(y)$ as the trajectory of   particles:
\begin{align}\label{trajectory}
  \left\{\begin{aligned}
    &\frac{dx}{dt}= v(t,x(t,y)), \quad (t,y)\in [0,T]\times \Omega,\\
    &x(0,y)=f_0(y), \quad y\in\Omega.
  \end{aligned}\right.
\end{align}
where, 
$f_0:\Omega\to \idm_0$   is some given diffeomorphism defined on a  given simple  domain $\Omega$,
and  $f_t:\Omega\to \dm$ with $f_t(y)=x(t,y)$ is a change of coordinates for each $t$.
As a result, for each $t$ from the Euclidean metric $\delta_{ij}$ in $\dm$,  a metric $g_{ab}$ (with inverse  $g^{cd}$) in $\Omega$ is induced as
\begin{align}\label{metric1}
  g_{ab}(t,y)=\delta_{ij}\frac{\D x^i}{\D y^a}\frac{\D x^j}{\D y^b},
\end{align}
and thus the covariant differentiation $\nb_a$ in the $y_a$-coordinates, $a=0, \cdots, n$,  in $\Omega$ will be used for the metric $g_{ab}(t,y)$.
An $(0,r)$ tensor $w(t,x)$ in the $x$-coordinates can be expressed as $k(t,y)$  in the $y$-coordinates:
\begin{align*}
  k_{a_1\cdots a_r}(t,y)=\frac{\D x^{i_1}}{\D y^{a_1}}\cdots \frac{\D x^{i_r}}{\D y^{a_r}}w_{i_1\cdots i_r}(t,x), \quad x=x(t,y),
\end{align*}
and 
the covariant differentiation of the tensor $k(t,y)$ is the $(0,r+1)$ tensor:
\begin{align}\label{eq.covtensor}
  \nb_a k_{a_1\cdots a_r}=\frac{\D x^i}{\D y^a}\frac{\D x^{i_1}}{\D y^{a_1}}\cdots \frac{\D x^{i_r}}{\D y^{a_r}}\frac{\D w_{i_1\cdots i_r}}{\D x^i}
\end{align}
with the invariant norms of tensors: 
\begin{align}\label{eq.norminv}
  g^{a_1b_1}\cdots g^{a_rb_r} k_{a_1\cdots a_r}k_{b_1\cdots b_r}=\delta^{i_1j_1}\cdots \delta^{i_rj_r}w_{i_1\cdots i_r}w_{j_1\cdots j_r}.
\end{align}
From
\begin{align}\label{eq.Di}
  \D_i=\frac{\D}{\D x^i}=\frac{\D y^a}{\D x^i}\frac{\D}{\D y^a},
\end{align}
the curvature vanishes in both the $x$-coordinates and the $y$-coordinates,  and then  
$[\nb_a,\nb_b]=0$.
Denote
 $${{k_{a\cdots}}^b}_{\cdots c}=g^{bd}k_{a\cdots d\cdots c}$$
 and 
 \begin{align*}
  D_t=\left.\frac{\D}{\D t}\right|_{y=\textrm{const}}=\left.\frac{\D}{\D t}\right|_{x=\textrm{const}}+v^k\frac{\D}{\D x^k}.
\end{align*}
Since covariant differentiation commutes with lowering and rising indices: 
$$g^{ce}\nb_a k_{b\cdot e\cdots d}=\nb_a g^{ce}k_{b\cdot e\cdots d},$$ 
one has,  from \cite[Lemma 2.2]{CL00}, 
\begin{align}\label{eq.Dt}
  D_tk_{a_1\cdots a_r}=\frac{\D x^{i_1}}{\D y^{a_1}}\cdots \frac{\D x^{i_r}}{\D y^{a_r}}\left(D_t w_{i_1\cdots i_r}+\frac{\D v^\ell}{\D x^{i_1}}w_{\ell\cdots i_r}+\cdots +\frac{\D v^\ell}{\D x^{i_r}}w_{i_1\cdots\ell}\right).
\end{align}

We now recall a result of \cite{CL00} concerning time derivatives of the change of
coordinates and commutators between time derivatives and space derivatives: 

\begin{lemma}[{\cite[Lemma 2.1]{CL00}}] \label{lem.CL00lem2.1}
  Let $x=f_t(y)$ be the change of variables given by \eqref{trajectory}, and let $g_{ab}$ be the metric given by \eqref{metric1}. Let $v_i=\delta_{ij}v^j=v^i$, and set
  \begin{align}\label{eq.CL00lem2.1.1}
  u_a(t,y)=v_i(t,x)\frac{\D x^i}{\D y^a}, \;  u^a=g^{ab}u_b,\;
    h_{ab}=\frac{1}{2}D_t g_{ab},\;  h^{ab}=&g^{ac}h_{cd}g^{db}.
  \end{align}
  Then  
  \begin{align}\label{eq.CL00lem2.1.3}
    D_t\frac{\D x^i}{\D y^a}=\frac{\D x^k}{\D y^a} \frac{\D v^i}{\D x^k},\quad D_t\frac{\D y^a}{\D x^i}=-\frac{\D y^a}{\D x^k}\frac{\D v^k}{\D x^i},
  \end{align}
  and
  \begin{align}\label{eq.CL00lem2.1.4}
    D_t g_{ab}=\nb_a u_b+\nb_b u_a,\quad D_t g^{ab}=-2h^{ab},  \quad D_t d\mu_g=g^{ab}h_{ab}d\mu_g,
  \end{align}
  where $d\mu_g$ is the Riemannian volume element on $\Omega$ in the metric $g$.
\end{lemma}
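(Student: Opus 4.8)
The plan is to obtain all five identities by direct computation from the defining ODE \eqref{trajectory} and the definition \eqref{metric1} of the induced metric, using only two elementary facts: that $D_t$, being the partial time derivative at fixed $y$, commutes with the coordinate derivatives $\D/\D y^a$ (both are partials in the independent variables $(t,y)$, so their mixed partials coincide); and that the Euclidean connection is flat, so that covariant differentiation $\nb_a$ in the $y$-coordinates is the pullback of the ordinary $x$-derivative, exactly as recorded in \eqref{eq.covtensor}.

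First I would establish \eqref{eq.CL00lem2.1.3}. Differentiating the trajectory equation $dx^i/dt=v^i(t,x(t,y))$ with respect to $y^a$ and interchanging the two partial derivatives gives $D_t(\D x^i/\D y^a)=\frac{\D}{\D y^a}\big(v^i(t,x(t,y))\big)=(\D v^i/\D x^k)(\D x^k/\D y^a)$ by the chain rule, which is the first formula. For the inverse Jacobian, apply $D_t$ to $(\D y^a/\D x^j)(\D x^j/\D y^b)=\delta^a_b$: since the right-hand side is constant, the Leibniz rule together with the formula just obtained gives $(D_t(\D y^a/\D x^j))(\D x^j/\D y^b)=-(\D y^a/\D x^j)(\D x^k/\D y^b)(\D v^j/\D x^k)$, and contracting with $\D y^b/\D x^i$ yields the second formula.

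Next I would prove $D_t g_{ab}=\nb_a u_b+\nb_b u_a$. Applying $D_t$ to \eqref{metric1} and using \eqref{eq.CL00lem2.1.3} in each factor, then lowering the free Euclidean indices with $\delta_{ij}$ and relabelling, one gets $D_t g_{ab}=(\D x^i/\D y^a)(\D x^j/\D y^b)(\D_i v_j+\D_j v_i)$. On the other hand, $u_a=v_i\,\D x^i/\D y^a$ is precisely the representation in the $y$-coordinates of the $1$-form with components $v_i$, so \eqref{eq.covtensor} gives $\nb_a u_b=(\D x^i/\D y^a)(\D x^j/\D y^b)\,\D_i v_j$; symmetrizing in $a,b$ matches the previous expression. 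Since $h_{ab}=\tfrac12 D_t g_{ab}$ by definition, this is the same as $D_t g_{ab}=2h_{ab}$.

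The remaining two identities are then purely algebraic consequences. Differentiating $g^{ac}g_{cb}=\delta^a_b$ and using $D_t g_{cb}=2h_{cb}$ gives $D_t g^{ab}=-g^{ac}g^{db}D_t g_{cd}=-2\,g^{ac}h_{cd}g^{db}=-2h^{ab}$. For the volume element, write $d\mu_g=\sqrt{\det g}\,dy$ and use Jacobi's formula $D_t\det g=(\det g)\,g^{ab}D_t g_{ab}$, whence $D_t\sqrt{\det g}=\tfrac12\sqrt{\det g}\,g^{ab}D_t g_{ab}=\sqrt{\det g}\,g^{ab}h_{ab}$, i.e. $D_t\,d\mu_g=g^{ab}h_{ab}\,d\mu_g$. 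I do not expect a genuine obstacle: the lemma is a short computation, and it is in any case quoted from \cite{CL00}. The only step requiring care is the identification in the third paragraph, where one must use the vanishing of the Christoffel symbols in the $x$-coordinates — so that the covariant and ordinary derivatives of $v$ coincide there — and must keep the convention $v^i=v_i$ straight when raising and lowering indices with $\delta$.
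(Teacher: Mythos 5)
Your computation is correct, and it is essentially the standard argument: the paper itself gives no proof of this lemma but quotes it from \cite{CL00}, where it is established by exactly the same direct calculation — differentiating \eqref{trajectory} in $y$ and swapping the $t$- and $y$-partials to get \eqref{eq.CL00lem2.1.3}, applying the Leibniz rule to \eqref{metric1} and identifying the result with $\nb_a u_b+\nb_b u_a$ via the flat-connection formula \eqref{eq.covtensor}, and then using the inverse-matrix identity and Jacobi's determinant formula for the last two identities in \eqref{eq.CL00lem2.1.4}. Your handling of the one delicate point (ordinary versus covariant derivatives coinciding in the Euclidean $x$-coordinates, with indices raised and lowered by $\delta$) is exactly what is needed, so the proposal fills the omitted proof correctly.
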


We also recall from \cite{CL00} the estimates of commutators between the material derivative $D_t$ and space derivatives $\D_i$ and covariant derivatives $\nb_a$:

\begin{lemma}[\mbox{\cite[Lemma 2.3]{CL00}}]
  Let $\D_i$ be given by \eqref{eq.Di}. Then
  \begin{align}\label{eq.DtDi}
    [D_t,\D_i]=-(\D_i v^k)\D_k.
  \end{align}
  Furthermore,
  \begin{align}\label{eq.DtDir}
    [D_t,\D^r]=\sum_{s=0}^{r-1}-\left(r\atop s+1\right)(\D^{1+s}v)\cdot \D^{r-s},
  \end{align}
  where the symmetric dot product is defined to be in components
  \begin{align}
    \left((\D^{1+s}v)\cdot \D^{r-s}\right)_{i_1\cdots i_r}=\frac{1}{r!}\sum_{\sigma\in \Sigma_r}\left(\D_{i_{\sigma_1}\cdots i_{\sigma_{1+s}}}^{1+s} v^k\right)\D_{ki_{\sigma_{s+2}}\cdots i_{\sigma_r}}^{r-s},
  \end{align}
  and $\sum_r$ denotes the collection of all permutations of $\{1,2,\cdots, r\}$.
\end{lemma}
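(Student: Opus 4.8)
The plan is to establish \eqref{eq.DtDi} by a direct computation and \eqref{eq.DtDir} by induction on $r$, the whole argument resting only on the definition $D_t=\D_t|_{x=\mathrm{const}}+v^k\D_k$ and on the commutativity of the flat partial derivatives $\D_t,\D_1,\dots,\D_n$.

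\textbf{First identity.} For a scalar $\phi(t,x)$, expand both sides:
\begin{align*}
D_t\D_i\phi-\D_i D_t\phi
&=\bigl(\D_t\D_i\phi+v^k\D_k\D_i\phi\bigr)-\bigl(\D_i\D_t\phi+(\D_i v^k)\D_k\phi+v^k\D_i\D_k\phi\bigr)\\
&=-(\D_i v^k)\D_k\phi,
\end{align*}
where every term but one cancels, which is \eqref{eq.DtDi}. (Equivalently, writing $\D_i=\tfrac{\D y^a}{\D x^i}\tfrac{\D}{\D y^a}$ as in \eqref{eq.Di}, using $\bigl[D_t,\tfrac{\D}{\D y^a}\bigr]=0$ since $D_t,\,\tfrac{\D}{\D y^a}$ are coordinate vector fields in $(t,y)$, and invoking $D_t\tfrac{\D y^a}{\D x^i}=-\tfrac{\D y^a}{\D x^k}\tfrac{\D v^k}{\D x^i}$ from \eqref{eq.CL00lem2.1.3}, one reaches the same conclusion.) Since $D_t$ and $\D_i$ act componentwise on tensors in the $x$-frame, the operator identity $[D_t,\D_i]=-(\D_i v^k)\D_k$ holds when applied to any tensor as well.

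\textbf{Higher-order identity.} I induct on $r$; the base case $r=1$ is \eqref{eq.DtDi}, as for $s=0,\,r=1$ the symmetric dot product degenerates to $\bigl((\D v)\cdot\D\bigr)_i=(\D_i v^k)\D_k$. For the step from $r$ to $r+1$, use $\D^{r+1}_{i_0i_1\cdots i_r}\phi=\D_{i_0}\bigl(\D^r_{i_1\cdots i_r}\phi\bigr)$, apply the first identity componentwise (treating each component of $\D^r\phi$ as a scalar) and substitute the inductive hypothesis into $D_t(\D^r\phi)$; this gives
\[
\bigl([D_t,\D^{r+1}]\phi\bigr)_{i_0\cdots i_r}=\D_{i_0}\bigl([D_t,\D^r]\phi\bigr)_{i_1\cdots i_r}-(\D_{i_0}v^k)\bigl(\D_k\D^r\phi\bigr)_{i_1\cdots i_r}.
\]
The left-hand side is totally symmetric in $i_0,\dots,i_r$ (flat partials commute), so the right-hand side may be replaced by its average over $\Sigma_{r+1}$. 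Feeding the inductive hypothesis for $[D_t,\D^r]$ into the first term and expanding $\D_{i_0}$ by the Leibniz rule—so that in each summand $\D_{i_0}$ either raises the velocity factor $\D^{1+s}v$ to $\D^{2+s}v$ or raises $\D^{r-s}\phi$ to $\D^{r-s+1}\phi$—while recognizing the last term of the displayed identity as the missing $s=0$ summand, one collects the coefficient of $(\D^{1+s}v)\cdot\D^{r+1-s}\phi$, for $0\ls s\ls r$, as $\binom{r}{s}+\binom{r}{s+1}=\binom{r+1}{s+1}$ (with the boundary cases $s=0$ giving $r+1=\binom{r+1}{1}$ and $s=r$ giving $1=\binom{r+1}{r+1}$), which is exactly the coefficient in \eqref{eq.DtDir}.

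\textbf{Main obstacle.} The only step requiring genuine care is the interaction of the symmetrization with the Leibniz rule: one must check that the full symmetrization over the $r+1$ indices of $\D_{i_0}$ applied to a symmetrized dot product $(\D^{1+s}v)\cdot\D^{r-s}\phi$ reproduces exactly $(\D^{2+s}v)\cdot\D^{r-s}\phi+(\D^{1+s}v)\cdot\D^{r-s+1}\phi$, with coefficient one each and no spurious combinatorial weight, so that the binomial identity above is all the arithmetic that remains. Writing this symmetrization bookkeeping down cleanly is the part I expect to be the most delicate; everything else follows immediately from the definition of $D_t$ and the commutativity of partial derivatives.
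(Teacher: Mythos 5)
Your proof is correct. Note that the paper itself gives no proof of this lemma --- it is quoted verbatim from \cite[Lemma 2.3]{CL00} --- so there is nothing internal to compare against; your argument (the one-line computation for \eqref{eq.DtDi}, then induction on $r$ using the componentwise commutator, full symmetrization justified by the symmetry of the left-hand side, and the Pascal identity $\binom{r}{s}+\binom{r}{s+1}=\binom{r+1}{s+1}$) is the standard derivation, and the symmetrization bookkeeping you flag does work out with unit weights precisely because both factors $\D^{1+s}v$ and $\D^{r-s}\phi$ are already symmetric in their own free indices, so averaging over $\Sigma_{r+1}$ of the Leibniz expansion reproduces exactly $(\D^{2+s}v)\cdot\D^{r-s}+(\D^{1+s}v)\cdot\D^{r+1-s}$.
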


\begin{lemma}\label{lem.CL00lem2.4}
  Let $T_{a_1\cdots a_r}$ be a $(0,r)$ tensor. Then
  \begin{align}\label{eq.CL00lem2.4.1}
    [D_t,\nb_a]T_{a_1\cdots a_r}=-(\nb_{a_1}\nb_a u^d)T_{da_2\cdots a_r}-\cdots -(\nb_{a_r}\nb_a u^d)T_{a_1\cdots a_{r-1}d}.
  \end{align}
  In particular, 
  \begin{align}
  [D_t,g^{ab}\nb_a]T_b=&-2h^{ab}\nb_a T_b-(\Delta u^e)T_e,\label{eq.CL002.4.2}\\
      [D_t,g^{ac}g^{bd}\nb_a]T_{cd}=&-\Delta u_eT^{eb}-g^{bd}\nb_d\nb_a u_e T^{ae}-g^{ae}\nb_e u_f\nb_a T^{fb}\no\\
      &-g^{be}\nb_e u_f\nb_aT^{af}-\nb_f u^a\nb_a T^{fb}-\nb_fu^b\nb_a T^{af}.\label{eq.dtnb}
  \end{align}
  If $\nabla$ denotes the first order covariant derivative,  $\Delta=g^{cd}\nb_c\nb_d$ is the Laplacian operator under the Lagrangian coordinates and $q$ is a function, then
  \begin{align}\label{eq.Dtpcommu}
    [D_t,\nb]q=0, \quad
    [D_t,\Delta]q=-2h^{ab}\nb_a\nb_b q-(\Delta u^e)\nb_e q,
  \end{align}
 and
  \begin{align}\label{eq.commutator}
    [D_t,\nb^r]q=\sum_{s=1}^{r-1}-\left(r\atop s+1\right)(\nb^{s+1}u)\cdot \nb^{r-s} q,
  \end{align}
  where the symmetric dot product is defined as, in components,
  \begin{align}
    \left((\nb^{s+1}u)\cdot \nb^{r-s} q\right)_{a_1\cdots a_r}=\frac{1}{r!}\sum_{\sigma\in\Sigma_r}\left(\nb_{a_{\sigma_1}\cdots a_{\sigma_{s+1}}}^{s+1} u^d\right)\nb_{da_{\sigma_{s+2}}\cdots a_{\sigma_r}}^{r-s}q.
  \end{align}
\end{lemma}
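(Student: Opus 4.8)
\medskip

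The plan is to prove the general commutator identity \eqref{eq.CL00lem2.4.1} first, by a direct pull-back computation, and then to derive all the remaining identities as algebraic consequences of it together with the metric evolution $D_tg^{ab}=-2h^{ab}$ and $h_{ab}=\tfrac12(\nb_au_b+\nb_bu_a)$ supplied by Lemma \ref{lem.CL00lem2.1}.

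To prove \eqref{eq.CL00lem2.4.1}, I would express $T_{a_1\cdots a_r}$ through its Eulerian components $w_{i_1\cdots i_r}$, writing $T_{a_1\cdots a_r}=\frac{\D x^{i_1}}{\D y^{a_1}}\cdots\frac{\D x^{i_r}}{\D y^{a_r}}w_{i_1\cdots i_r}$, and use \eqref{eq.covtensor} to express $\nb_aT_{a_1\cdots a_r}$ as the pull-back of $\D_iw_{i_1\cdots i_r}$. Applying $D_t$ and the Leibniz rule produces exactly three kinds of terms: $D_t$ falling on the leading Jacobian $\frac{\D x^i}{\D y^a}$ or on one of the $\frac{\D x^{i_m}}{\D y^{a_m}}$, each rewritten via \eqref{eq.CL00lem2.1.3}; and $D_t$ falling on $\D_iw$, which by \eqref{eq.DtDi} equals $\D_iD_tw-(\D_iv^k)\D_kw$. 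On the other hand, by \eqref{eq.covtensor} and \eqref{eq.Dt}, $\nb_aD_tT_{a_1\cdots a_r}$ is the pull-back of $\D_i\bigl(D_tw_{i_1\cdots i_r}+\sum_m(\D_{i_m}v^\ell)w_{i_1\cdots\ell\cdots i_r}\bigr)$. Forming the difference, the $\D_iD_tw$-contributions cancel, the term from $D_t$ on the leading Jacobian cancels $-(\D_iv^k)\D_kw$ after relabeling the dummy index, and the term from $D_t$ on the $m$-th Jacobian cancels the piece of $\D_i\bigl((\D_{i_m}v^\ell)w_{\cdots\ell\cdots}\bigr)$ in which $\D_i$ lands on $w$. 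What survives is $-\sum_m\frac{\D x^i}{\D y^a}\frac{\D x^{i_1}}{\D y^{a_1}}\cdots\frac{\D x^{i_r}}{\D y^{a_r}}(\D_i\D_{i_m}v^\ell)w_{i_1\cdots\ell\cdots i_r}$, and this is exactly $-\sum_m(\nb_{a_m}\nb_au^d)T_{a_1\cdots d\cdots a_r}$, because $\nb_{a_m}\nb_au^d$ is the pull-back of $\D_{i_m}\D_iv^k$ with the raised index converted by $\frac{\D y^d}{\D x^k}$, which cancels against the Jacobian factor $\frac{\D x^\ell}{\D y^d}$ absorbed in the $d$-th slot of $T_{a_1\cdots d\cdots a_r}$.

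Granting \eqref{eq.CL00lem2.4.1}, the identities \eqref{eq.CL002.4.2} and \eqref{eq.dtnb} follow by expanding $D_t(g^{ab}\nb_aT_b)$ and $D_t(g^{ac}g^{bd}\nb_aT_{cd})$ with the Leibniz rule: the terms in which $D_t$ hits a metric factor produce $-2h^{ab}\nb_aT_b$, respectively the four $h$-terms $-g^{ae}\nb_eu_f\nb_aT^{fb}-g^{be}\nb_eu_f\nb_aT^{af}-\nb_fu^a\nb_aT^{fb}-\nb_fu^b\nb_aT^{af}$, once one inserts $h^{ab}=\tfrac12(\nb^au^b+\nb^bu^a)$ and uses that $\nb$ commutes with raising and lowering of indices; and the term in which $D_t$ hits $\nb_a$ is computed from \eqref{eq.CL00lem2.4.1} with $r=1$ (resp.\ $r=2$), yielding, after contraction with $g^{ab}$ (resp.\ $g^{ac}g^{bd}$) and the use of $g^{cd}\nb_c\nb_d=\Delta$, the terms $-(\Delta u^e)T_e$ (resp.\ $-\Delta u_eT^{eb}-g^{bd}\nb_d\nb_au_eT^{ae}$). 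This step is pure index bookkeeping. In the same way, $[D_t,\nb]q=0$ in \eqref{eq.Dtpcommu} is the $r=0$ instance of \eqref{eq.CL00lem2.4.1} (equivalently, a one-line pull-back computation), and then $[D_t,\Delta]q=[D_t,g^{ab}\nb_a](\nb_bq)$ together with $D_t\nb_bq=\nb_bD_tq$ and \eqref{eq.CL002.4.2} gives $-2h^{ab}\nb_a\nb_bq-(\Delta u^e)\nb_eq$.

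Finally, for \eqref{eq.commutator} I would observe that, since $[\nb_a,\nb_b]=0$, the tensor $\nb^rq$ is fully symmetric and, by iterating \eqref{eq.covtensor}, equals the pull-back of $\D^rq$; applying $D_t$ and invoking \eqref{eq.DtDir}, the terms in which $D_t$ hits a Jacobian are cancelled exactly by the pull-back of the $s=0$ term $-\binom{r}{1}(\D v)\cdot\D^rq$, while the pull-back of the remaining terms $-\binom{r}{s+1}(\D^{1+s}v)\cdot\D^{r-s}q$, $s\gs1$, are precisely $-\binom{r}{s+1}(\nb^{s+1}u)\cdot\nb^{r-s}q$, since $\nb^{s+1}u^d$ is the pull-back of $\D^{1+s}v^k$ (the raised index converted by $\frac{\D y^d}{\D x^k}$) and this extra factor is absorbed by the one from $\nb^{r-s}q$ under the contraction. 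The only genuinely delicate point in the whole argument is the term-by-term cancellation in the pull-back computation for \eqref{eq.CL00lem2.4.1} — and its analogue for \eqref{eq.commutator} — since one must keep careful track of which dummy indices are being relabeled; every other step is routine algebra.
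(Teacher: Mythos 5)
Your proposal is correct and, for the one identity the paper actually proves in detail --- \eqref{eq.dtnb} --- it follows exactly the paper's route: Leibniz rule, $D_tg^{ab}=-2h^{ab}$ with $h_{ab}=\tfrac12(\nb_au_b+\nb_bu_a)$, and the commutator \eqref{eq.CL00lem2.4.1}, followed by index bookkeeping. The only difference is that where the paper simply cites \cite{CL00} and \cite{HLarma} for \eqref{eq.CL00lem2.4.1}, \eqref{eq.CL002.4.2}, \eqref{eq.Dtpcommu} and \eqref{eq.commutator}, you supply the standard pull-back/Jacobian-cancellation proofs of these (the same argument as in \cite{CL00}), and your cancellation bookkeeping there checks out.
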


\begin{proof}
 From \eqref{eq.CL00lem2.1.4} and \eqref{eq.CL00lem2.4.1}, it follows that
\begin{align*}
&[D_t,g^{ac}g^{bd}\nb_a]T_{cd}\\  
&=D_tg^{ac}g^{bd}\nb_aT_{cd}+g^{ac}D_tg^{bd}\nb_aT_{cd}+g^{ac}g^{bd}D_t\nb_aT_{cd}-g^{ac}g^{bd}\nb_aD_tT_{cd}\\
&=-2h^{ac}g^{bd}\nb_aT_{cd}-2g^{ac}h^{bd}\nb_aT_{cd}-g^{ac}g^{bd}(\nb_c\nb_au^eT_{ed}+\nb_d\nb_au^eT_{ce})\\
&=-\Delta u_eT^{eb}-g^{bd}\nb_d\nb_a u_e T^{ae}-g^{ae}\nb_e u_f\nb_a T^{fb}\\
&\quad -g^{be}\nb_e u_f\nb_aT^{af}-\nb_f u^a\nb_a T^{fb}-\nb_fu^b\nb_a T^{af}.
\end{align*}
For other identities, one can see the proofs in \cite[Lemma 2.4]{CL00} and \cite[Lemma 2.3]{HLarma}, and we omit the details.
\end{proof}

Let
$$\F_{ab}=F_{ij}\frac{\D x^i}{\D y^a}\frac{\D x^j}{\D y^b}, \quad
\F^{ab}=g^{ac}g^{bd}\F_{cd}, \quad |\F|^2=\F_{ab}\F^{ab}.$$
Then, it follows  from \eqref{eq.norminv} that
\begin{align}\label{eq.beta}
  |\F|^2=|F|^2 \quad \text{and} \quad \quad F_{ij}=\frac{\D y^a}{\D x^i}\frac{\D y^b}{\D x^j}\F_{ab}.
\end{align}
From \eqref{eq.CL00lem2.1.1}, \eqref{eq.beta}, \eqref{eq.CL00lem2.1.3} and \eqref{eq.covtensor}, it follows that
\begin{align*}
D_t u_a=&\detail{D_t\left(v_i\frac{\D x^i}{\D y^a}\right) =\frac{\D x^i}{\D y^a}D_t v_i+v_iD_t\frac{\D x^i}{\D y^a}\\
  =&-\nb_a p +\frac{\D y^b}{\D x^l}\frac{\D y^c}{\D x^k}\nb_c\F_{ab} \delta^{jk}\frac{\D y^e}{\D x^j} \delta^{ml}\frac{\D y^f}{\D x^m}\F_{ef}+ u_b\delta^{ij}\frac{\D y^b}{\D x^j}\frac{\D y^c}{\D x^i} \nb_a u_c\\
  =&}-\nb_a p +\nb_c\F_{ab}\F^{cb}+u^c\nb_a u_c.
\end{align*}
Similarly,  
\begin{align*}
  D_t\F_{ab}=&\detail{D_t\left(F_{ij}\frac{\D x^i}{\D y^a}\frac{\D x^j}{\D y^b}\right)=\frac{\D x^i}{\D y^a}\frac{\D x^j}{\D y^b}D_tF_{ij}+F_{ij}\left(D_t\frac{\D x^i}{\D y^a}\frac{\D x^j}{\D y^b}+\frac{\D x^i}{\D y^a}D_t\frac{\D x^j}{\D y^b}\right)\\
  =&\nb_du_a\frac{\D y^d}{\D x^k}\frac{\D y^c}{\D x^l}\delta^{kl}\F_{cb}+\F_{cb}\frac{\D y^c}{\D x^i}\nb_a u_e\frac{\D y^e}{\D x^j}\delta^{ij}+\F_{ac}\frac{\D y^c}{\D x^j}\nb_bu_d\frac{\D y^d}{\D x^i}\delta^{ij}\\
  =&}g^{dc}\nb_du_a \F_{cb}+\F_{cb}\nb_au^c+\F_{ac}\nb_bu^c.
\end{align*}
Then, we can rewrite the system \eqref{eld1} in the Lagrangian coordinates as
\begin{subequations}\label{eld41}
\begin{align}
    &D_tu_a+\nb_a p =\nb_c\F_{ab}\F^{cb}+u^c\nb_a u_c&&\quad \text{ in } [0,T]\times \Omega, \label{eld41.1}\\
    &D_t\F_{ab}=g^{dc}\nb_du_a \F_{cb}+\F_{cb}\nb_au^c+\F_{ac}\nb_bu^c &&\quad \text{ in } [0,T]\times \Omega,\label{eld41.2}\\
    &\nb_a u^a=0,\quad \nb_a\F^{ab}=0&&\quad \text{ in } [0,T]\times \Omega,\label{eld41.3}\\
    &p=0, \quad  N^a\F_{ab} =0 &&\quad \text{ on } [0,T]\times \D\Omega.\label{eld41.4}
\end{align}
\end{subequations}

From \eqref{energy.phys}, we also have the conserved energy
\begin{align}
E_0(t)=\int_{\Omega}\left(\frac{1}{2}\abs{u(t,y)}^2+\frac{1}{2}\abs{\F(t,y)}^2\right)d\mu_g.
\end{align}

We note that if
\begin{align}\label{eq.beta0}
     |\nb u(t,y)|\ls C\quad  \text{in } [0,T]\times\overline{\Omega},
\end{align}
and $\dv v=0$ in $[0,T]\times\Omega$, then  the divergence free property of $\F^\top$, i.e., $\dv\F^\top=0$, is preserved for all times under the Lagrangian coordinates or in view of the material derivative, i.e., $D_t\dv\F^\top=0$. Indeed, from \eqref{eq.dtnb} and Lemma \ref{lem.CL00lem2.1}, the divergence of $\eqref{eld41.2}$ gives
  \begin{align*}
    D_t\nb_a\F^{ab}=\detail{&D_t(g^{ac}g^{bd}\nb_a\F_{cd})=[D_t,g^{ac}g^{bd}\nb_a]\F_{cd}+g^{ac}g^{bd}\nb_a D_t \F_{cd}\\
    =&-\Delta u_e\F^{eb}-g^{bd}\nb_d\nb_a u_e \F^{ae}-g^{ae}\nb_e u_f\nb_a \F^{fb}-g^{be}\nb_e u_f\nb_a\F^{af}\\
    &-\nb_f u^a\nb_a \F^{fb}-\nb_fu^b\nb_a \F^{af}+\nb_f\nb_au^a\F^{fb} +\nb_fu^a\nb_a\F^{fb}\\
    &+g^{ac}\nb_a\F^{eb}\nb_c u_e+\F^{eb}\Delta u_e+g^{bd}\nb_a\F^{ae}\nb_d u_e+g^{bd}\F^{ae}\nb_a\nb_d u_e\\
    =&}-\nb_fu^b\nb_a \F^{af},
  \end{align*}
which implies, by the Gronwall inequality and  the identity $\big|D_t|f|\big|=\abs{D_t f}$, that
\begin{align*}
\abs{\nb_a\F^{ab}(t,y)}\ls e^{Ct}\abs{\nb_a\F^{ab}(0,y)}=0.
\end{align*}
Moreover,   $N\cdot \F^\top=0$ is also preserved for all time  $t$ in the lifespan $[0,T]$, that is, we have $N^a\F_{ab}=0$ on $[0,T]\times\D\Omega$ if $N\cdot \F^\top=0$ on the boundary $\D\Omega$ at initial time. In fact, we have, from \eqref{eld41.2},  Lemmas \ref{lem.CL00lem2.1} and \ref{lem.CL00lem3.9}, that
\begin{align*}
  D_t(N^a\F_{ab})=\detail{&D_t(g^{ac}N_c\F_{ab} )=D_t g^{ac}N_c\F_{ab}+g^{ac}D_t N_c\F_{ab}+N^aD_t\F_{ab}\\
  =&-2h^{ac}N_c\F_{ab}+g^{ac}h_{NN}N_c\F_{ab}+N^a(g^{dc}\nb_du_a \F_{cb}+\F_{cb}\nb_au^c+\F_{ac}\nb_bu^c)\\
  =&-g^{ad}\nb_du^cN_c\F_{ab}-\nb_eu^aN^e\F_{ab}+h_{NN}N^a\F_{ab}+g^{dc}\nb_du_a N^a\F_{cb}\\
  &+\F_{cb}\nb_au^cN^a+\F_{ac}\nb_bu^cN^a\\
  =&}h_{NN}N^a\F_{ab}+\nb_bu^cN^a\F_{ac},
\end{align*}
which yields similarly that
\begin{align*}
  |(N^a\F_{ab})(t,y)|\ls e^{Ct} |(N^a\F_{ab}(0,y)|=0.
\end{align*}

\bigskip

\section{The First Order Energy Estimates}\label{sec.1order}

In this section, we prove the first order energy estimate.
From \eqref{eq.CL00lem2.4.1} and \eqref{eld41.1},
we get
\begin{align*}
  D_t(\nb_b u_a)+\nb_b\nb_a p 
  =&-(\nb_a\nb_b u^d) u_d+\nb_b(\nb_c\F_{ad}\F^{cd})+\nb_b(u^c\nb_a u_c)\\
  =&\nb_b u^c\nb_au_c+\nb_b\nb_c\F_{ad}\F^{cd}+\nb_c\F_{ad}\nb_b\F^{cd}.
\end{align*}
From \eqref{eq.CL00lem2.4.1} and \eqref{eld41.2}, we obtain
\begin{align*}
D_t(\nb_c\F_{ab}) 
  =&-(\nb_a\nb_c u^d)\F_{db}-(\nb_b\nb_c u^d)\F_{ad}+g^{de}\nb_c\nb_d u_a\F_{eb}+g^{de}\nb_d u_a\nb_c\F_{eb}\\
  &+\nb_c\F_{db}\nb_a u^d+\F_{db}\nb_c\nb_a u^d+\nb_c\F_{ad}\nb_b u^d+\F_{ad}\nb_b\nb_c u^d\\
  =&g^{de}\nb_c\F_{eb}(\nb_d u_a+\nb_a u_d)+\nb_c\F_{ad}\nb_b u^d+g^{de}\F_{eb}\nb_c\nb_d u_a.
\end{align*}
Thus, we have
\begin{align}
D_t(\nb_b u_a)+&\nb_b\nb_a p
=\nb_b u^c\nb_au_c+\nb_b\nb_c\F_{ad}\F^{cd}+\nb_c\F_{ad}\nb_b\F^{cd},\label{eq.1energy1}\\
\intertext{and} 
D_t(\nb_c\F_{ab})=&g^{de}\nb_c\F_{eb}(\nb_d u_a+\nb_a u_d)+\nb_c\F_{ad}\nb_b u^d+g^{de}\F_{eb}\nb_c\nb_d u_a.\label{eq.1energy2}
\end{align}

We now derive the material derivative of $g^{bd}\gamma^{ae}\nb_a u_b\nb_e u_d$. From \eqref{eq.CL00lem2.1.4}, \eqref{eq.CL00lem2.1.1} and \eqref{eq.CL00lem3.9.1}, we have
\begin{align} \label{eq.1energy3}
  \detail{&}D_t(g^{bd}\gamma^{ae}\nb_a u_b\nb_e u_d)\no\\
  =&(D_t g^{bd})\gamma^{ae}\nb_a u_b\nb_e u_d +g^{bd}(D_t\gamma^{ae})\nb_a u_b\nb_e u_d +2g^{bd}\gamma^{ae}(D_t\nb_a u_b)\nb_e u_d\no\\
  =&-2\gamma^{ae}\gamma^{fc}\nb_eu_f\nb_a u^d\nb_c u_d-2\gamma^{ae}\nb_e u^b\nb_a\nb_b p \no\\
  &+2\gamma^{ae}\nb_e u^b(\nb_a\nb_c\F_{bf}\F^{cf}+\nb_c\F_{bf}\nb_a\F^{cf}),
\end{align}
 and from \eqref{eq.1energy2} it follows that
\begin{equation}\label{eq.1energy4}
\begin{split}
  &D_t(g^{bd}g^{cf}\gamma^{ae}\nb_a \F_{bc}\nb_e \F_{df}) \\
  =&-2g^{bq}h_{qs}g^{sd} g^{cf}\gamma^{ae}\nb_a \F_{bc}\nb_e \F_{df}-2g^{bd}g^{cq}h_{qs}g^{sf}\gamma^{ae}\nb_a \F_{bc}\nb_e \F_{df} \\
  &-2g^{bd}g^{cf}\gamma^{aq}h_{qs}\gamma^{se}\nb_a \F_{bc}\nb_e \F_{df}+2g^{bd}g^{cf}\gamma^{ae}D_t\nb_a \F_{bc}\nb_e \F_{df} \\
  =&-2\nb_qu_s\gamma^{aq}\gamma^{se}\nb_a \F^{df}\nb_e \F_{df}+2\nb_qu^d\gamma^{ae}\nb_a \F^{qf}\nb_e \F_{df}  \\   &+2\gamma^{ae}\nb_e \F_{bc}\F^{qc}\nb_a\nb_qu^b.
\end{split}\end{equation}
Combining \eqref{eq.1energy3} with \eqref{eq.1energy4} yields
\begin{align}\label{eq.1energy1.u}
  &D_t\left(g^{bd}\gamma^{ae}\nb_a u_b\nb_e u_d+g^{bd}g^{cf}\gamma^{ae}\nb_a \F_{bc}\nb_e \F_{df}\right)\no\\
  =&-2\gamma^{ae}\gamma^{fc}\nb_eu_f\nb_a u^d\nb_c u_d-2\gamma^{ae}\gamma^{fc}\nb_eu_f\nb_a \F^{ds}\nb_c \F_{ds}\no\\
  &-2\nb_b(\gamma^{ae}\nb_e u^b\nb_a p-\gamma^{ae}\nb_e u^c\nb_a\F_{cf}\F^{bf})\no\\
  &+2(\nb_b\gamma^{ae})(\nb_e u^b\nb_a p -\nb_e u^c\nb_a\F_{cf}\F^{bf})\no\\
  &+2\gamma^{ae}\nb_e u^d\nb_c\F_{df}\nb_a\F^{cf}+2\gamma^{ae}\nb_cu^d\nb_e \F_{df}\nb_a \F^{cf}.
\end{align}

Then we calculate the material derivatives of $|\curl u|^2$ and $|\curl \F^\top|^2$ where   $\curl \F^\top$ is defined as
$$(\curl\F^\top)_{abc}:=\nb_a\F_{bc}-\nb_b\F_{ac}.$$ 
Indeed, one has
\begin{align*}
    D_t|\curl u|^2=&D_t\left(g^{ac}g^{bd}(\curl u)_{ab}(\curl u)_{cd}\right)\\
    =&2(D_t g^{ac})g^{bd}(\curl u)_{ab}(\curl u)_{cd}+4 g^{ac}g^{bd}(D_t\nb_a u_b)(\curl u)_{cd}\\
    =&-4g^{ae}g^{bd}\nb_eu^c(\curl u)_{ab}(\curl u)_{cd}\\
    &+4g^{ac}g^{bd}(\curl u)_{cd}(\nb_a\F^{ef} \nb_e\F_{bf}+\nb_a\nb_e\F_{bf}\F^{ef}),
\end{align*}
and
\begin{align*}
    D_t|\curl \F^\top|^2=&D_t\left(g^{ac}g^{bd}g^{ef}(\curl\F^\top)_{abe}(\curl\F^\top)_{cdf}\right)\\
    =&-4g^{aq}g^{bd}g^{ef}\nb_qu^c(\curl \F^\top)_{abe}(\curl \F^\top)_{cdf}\\
    &-2g^{ac}g^{bd}g^{eq}\nb_q u^f(\curl \F^\top)_{abe}(\curl \F^\top)_{cdf}\\
    &+4 g^{ac}(\curl \F^\top)_{cdf}\nb_a\F^{sf}\nb_su^d+4 g^{ac}g^{bd}(\curl \F^\top)_{cdf}\nb_a\F^{qf}\nb_bu_q\\
    &+4 g^{ac}g^{ef}(\curl \F^\top)_{cdf}\nb_a\F^{ds}\nb_eu_s+4 g^{ac}(\curl \F^\top)_{cdf}\F^{sf}\nb_a\nb_su^d.
\end{align*}
Thus, we have obtained
\begin{align}\label{eq.1energy1.curl}
  &D_t(|\curl u|^2+|\curl \F^\top|^2)\no\\
  =&-4g^{ae}g^{bd}\nb_eu^c(\curl u)_{ab}(\curl u)_{cd}+4g^{ac}g^{bd}(\curl u)_{cd}\nb_a\F^{ef} \nb_e\F_{bf}\no\\
  &-4g^{aq}g^{bd}g^{ef}\nb_qu^c(\curl \F^\top)_{abe}(\curl \F^\top)_{cdf}\no\\
  &-2g^{ac}g^{bd}g^{eq}\nb_q u^f(\curl \F^\top)_{abe}(\curl \F^\top)_{cdf}\no\\
  &+4 g^{ac}(\curl \F^\top)_{cdf}\nb_a\F^{sf}\nb_su^d+4 g^{ac}g^{bd}(\curl \F^\top)_{cdf}\nb_a\F^{qf}\nb_bu_q\no\\
  &+4 g^{ac}g^{ef}(\curl \F^\top)_{cdf}\nb_a\F^{ds}\nb_eu_s+4\nb_e\left[g^{ac}g^{bd}(\curl u)_{cd}\nb_a\F_{bf}\F^{ef}\right].
\end{align}

Define the first order energy as
\begin{align}\label{energy.1}
   E_1(t)=&\int_\Omega \left(g^{bd}\gamma^{ae}\nb_a u_b\nb_e u_d+g^{bd}g^{cf}\gamma^{ae}\nb_a \F_{bc}\nb_e \F_{df}\right)d\mu_g\no\\
   &+\int_\Omega\left(|\curl u|^2+|\curl \F^\top|^2\right)d\mu_g.
\end{align}
Recall the Gauss formula for $\Omega$ and $\D\Omega$:
\begin{align}\label{Gauss}
  \int_\Omega \nb_a w^a d\mu_g=\int_{\D\Omega} N_a w^a d\mu_\gamma, \quad \text{and} \quad \int_{\D\Omega}\bnb_a\bar{f}^a d\mu_\gamma =0
\end{align}
if $\bar{f}$ is tangential to $\D\Omega$ and $N_a$ denotes the unit conormal to $\D\Omega$. Then, we can establish the following estimate on the first order energy:

\begin{theorem}\label{thm.1energy}
  For any smooth solution of system \eqref{eld41} for $0\ls t\ls T$ satisfying
  \begin{align}
    |\nb p|\ls M, \quad |\nb u|\ls& M,  &&\text{in } [0,T]\times \Omega,\\
    |\theta|+|\nb u|+\frac{1}{\iota_0}\ls &K,&&\text{on } [0,T]\times \D\Omega, \label{eq.1energy1.1}
  \end{align}
one has,  for any $t\in[0,T]$,
  \begin{align}
    E_1(t)\ls 2e^{CMt}E_1(0)+C K^2\left(\vol\Omega+ E_0(0)\right)\left(e^{CMt}-1\right)
  \end{align}
  with some constant $C>0$ which depends only on the dimension $n$.
\end{theorem}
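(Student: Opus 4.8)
The plan is to differentiate $E_1(t)$ in time, substitute the identities \eqref{eq.1energy1.u} and \eqref{eq.1energy1.curl} already obtained for the material derivatives of its two integrands, show that every resulting term is absorbed into $CM\,E_1(t)$ or into the fixed quantity $CMK^2(\vol\Omega+E_0(0))$, and then close by Gronwall's inequality. First I would observe that since $\nb_a u^a=0$, the relations \eqref{eq.CL00lem2.1.4} give $D_t d\mu_g=(g^{ab}h_{ab})\,d\mu_g=(\nb_a u^a)\,d\mu_g=0$; as $\Omega$ is fixed in the Lagrangian description, differentiation of \eqref{energy.1} under the integral sign therefore reduces $\tfrac{d}{dt}E_1(t)$ to the integral over $\Omega$, against $d\mu_g$, of $D_t$ applied termwise to the integrand of \eqref{energy.1}, into which \eqref{eq.1energy1.u} and \eqref{eq.1energy1.curl} are inserted.

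I would then sort the resulting terms into three groups. The \emph{pure divergence terms} $-2\nb_b(\gamma^{ae}\nb_e u^b\nb_a p-\gamma^{ae}\nb_e u^c\nb_a\F_{cf}\F^{bf})$ and $4\nb_e[g^{ac}g^{bd}(\curl u)_{cd}\nb_a\F_{bf}\F^{ef}]$ become, by the Gauss formula \eqref{Gauss}, surface integrals over $\D\Omega$, and I expect all of them to vanish: on $\D\Omega$ one has $\eta(d)=1$, so $\gamma^{ae}\nb_a p=\nb^e p-N^e\nb_N p=\bnb^e p=0$ since $p=0$ there, whereas the other surface integrands carry the factor $N_b\F^{bf}=g^{fd}N^a\F_{ad}=0$ by the boundary condition \eqref{eld41.4}. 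This cancellation, forced by $p=0$ and $N\cdot\F^\top=0$ on the boundary, is exactly what lets the first-order energy close without any boundary contribution.

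The \emph{interior cubic terms} — the remaining terms, trilinear in first derivatives of $(u,\F)$ with all indices contracted through $g$, $\gamma$, and the curl structure, e.g.\ $\gamma^{ae}\gamma^{fc}\nb_eu_f\nb_a u^d\nb_c u_d$, $\gamma^{ae}\nb_e u^d\nb_c\F_{df}\nb_a\F^{cf}$, $g^{ae}g^{bd}\nb_eu^c(\curl u)_{ab}(\curl u)_{cd}$, $g^{ac}g^{bd}(\curl u)_{cd}\nb_a\F^{ef}\nb_e\F_{bf}$ and the analogues with $\curl\F^\top$ — I would bound by pulling out one first-derivative-of-$u$ factor (or a component of $\curl u$, using $|\curl u|\ls 2|\nb u|$) and estimating it by $M$; since the a priori bounds keep $g$ and $\gamma$ comparable to the Euclidean metric, the remaining bilinear factor is then pointwise dominated by the quadratic density of \eqref{energy.1} or by $|\nb\F|^2$. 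The third group, the \emph{terms carrying a derivative of $\gamma$}, $2(\nb_b\gamma^{ae})(\nb_e u^b\nb_a p-\nb_e u^c\nb_a\F_{cf}\F^{bf})$, is supported in the collar $\{d<d_0/2\}$; from $\nb_c\gamma^{ab}=-2\eta\eta'(\nb_c d)N^aN^b-\eta^2(\nb_c N^a\,N^b+N^a\nb_c N^b)$ and the fact that $\nb N$ is the second fundamental form of the level sets of $d$, the collar estimates recalled in the Appendix from \cite{CL00} give $|\nb\gamma|\ls CK$ there, so Young's inequality together with $|\nb u|,|\nb p|\ls M$ bounds this group by $CM\!\int_\Omega|\nb\F|^2\,d\mu_g+CMK^2\big(\vol\Omega+\!\int_\Omega|\F|^2\,d\mu_g\big)$. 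It then remains to use $\int_\Omega|\F|^2\,d\mu_g\ls 2E_0$ and the first-order div--curl inequality of the Appendix, which (since $\dv\F^\top=0$) yields $\int_\Omega|\nb\F|^2\,d\mu_g\ls C\big(E_1(t)+K^2(\vol\Omega+E_0)\big)$ and also lets one dispose of any leftover product $|\curl\F^\top|\,|\nb\F|$ by Young's inequality; collecting everything gives $\tfrac{d}{dt}E_1(t)\ls CM\,E_1(t)+CMK^2(\vol\Omega+E_0(0))$ on $[0,T]$, and Gronwall's inequality produces the stated bound.

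The hard part will be the third group, the terms carrying a derivative of $\gamma$: one must control the geometry of the collar — above all the second fundamental form of the level sets of the distance function and the lower bound on the injectivity radius — purely through the single boundary quantity $K$, and one needs the first-order div--curl inequality to exchange $\int_\Omega|\nb\F|^2$ for $E_1$ modulo lower-order terms; both are geometric facts imported from \cite{CL00} and collected in the Appendix. The cubic interior terms are routine once the metric comparability $c\,\delta\ls g\ls C\,\delta$ (which follows from $|\nb u|\ls M$ on $[0,T]$ via $D_t g_{ab}=\nb_a u_b+\nb_b u_a$) is in hand, and the divergence terms are precisely where the boundary conditions remove what would otherwise be an uncontrolled boundary contribution.
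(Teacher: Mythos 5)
Your proposal is correct and follows essentially the same route as the paper: differentiate $E_1$, insert \eqref{eq.1energy1.u} and \eqref{eq.1energy1.curl}, kill the boundary terms via $p=0$ and $N\cdot\F^\top=0$ on $\D\Omega$, bound the $\nb\gamma$ term through $\nb_b\gamma^{ae}=-\theta_b^aN^e-\theta_b^eN^a$ with $|\theta|\ls K$, control $\norm{\nb u}_{L^2}$, $\norm{\nb\F}_{L^2}$ by $E_1^{1/2}$ with the div--curl Lemma \ref{lem.CL00lem5.5} and $\norm{\F}_{L^2}$ by the conserved $E_0$, and close with Gronwall. The only cosmetic difference is that you use $\tr h=\nb_au^a=0$ to drop the volume-element terms and apply Young's inequality so as to run Gronwall on $E_1$ itself, whereas the paper keeps the critical term as $CKM\big((\vol\Omega)^{1/2}+E_0^{1/2}(0)\big)E_1^{1/2}(t)$ and runs Gronwall on $E_1^{1/2}$; both give the stated estimate.
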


\begin{proof}
  It follows, from \eqref{eq.1energy1.u}, \eqref{eq.1energy1.curl} and Gauss' formula, that
  \begin{align}
    &\frac{d}{dt}E_1(t)=\int_\Omega D_t\left(g^{bd}\gamma^{ae}\nb_a u_b\nb_e u_d+g^{bd}g^{cf}\gamma^{ae}\nb_a \F_{bc}\nb_e \F_{df}\right)d\mu_g\no\\
    &\qquad\qquad\quad+\int_\Omega \left(g^{bd}\gamma^{ae}\nb_a u_b\nb_e u_d+g^{bd}g^{cf}\gamma^{ae}\nb_a \F_{bc}\nb_e \F_{df}\right)\tr h d\mu_g\no\\
    &\qquad\qquad\quad+\int_\Omega D_t\left(|\curl u|^2+|\curl \F^\top|^2\right)d\mu_g\no \\
    &\qquad\qquad\quad+\int_\Omega \left(|\curl u|^2+|\curl \F^\top|^2\right)\tr h d\mu_g\no\\
   =&-2\int_\Omega \gamma^{ae}\gamma^{fc}\nb_eu_f\nb_a u^d\nb_c u_dd\mu_g-2\int_\Omega \gamma^{ae}\gamma^{fc}\nb_eu_f\nb_a \F^{ds}\nb_c \F_{ds}d\mu_g\no\\
     &-2\int_{\D\Omega} N_b(\gamma^{ae}\nb_e u^b\nb_a p-\gamma^{ae}\nb_e u^c\nb_a\F_{cf}\F^{bf})d\mu_\gamma\label{energy.1.2}\\
     &+2\int_\Omega (\nb_b\gamma^{ae})(\nb_e u^b\nb_a p -\nb_e u^c\nb_a\F_{cf}\F^{bf})d\mu_g\label{energy.1.3}\\
     &+2\int_\Omega \gamma^{ae}\nb_e u^d\nb_c\F_{df}\nb_a\F^{cf}d\mu_g+2\int_\Omega \gamma^{ae}\nb_cu^d\nb_e \F_{df}\nb_a \F^{cf}d\mu_g\no\\
     &-4\int_\Omega g^{ae}g^{bd}\nb_eu^c(\curl u)_{ab}(\curl u)_{cd}d\mu_g+4\int_\Omega g^{ac}g^{bd}(\curl u)_{cd}\nb_a\F^{ef} \nb_e\F_{bf}d\mu_g\no\\
       &-4\int_\Omega g^{aq}g^{bd}g^{ef}\nb_qu^c(\curl \F^\top)_{abe}(\curl \F^\top)_{cdf}d\mu_g\no\\
       &-2\int_\Omega g^{ac}g^{bd}g^{eq}\nb_q u^f(\curl \F^\top)_{abe}(\curl \F^\top)_{cdf}d\mu_g\no\\
       &+4\int_\Omega  g^{ac}(\curl \F^\top)_{cdf}\nb_a\F^{sf}\nb_su^dd\mu_g+4\int_\Omega  g^{ac}g^{bd}(\curl \F^\top)_{cdf}\nb_a\F^{qf}\nb_bu_qd\mu_g\no\\
       &+4\int_\Omega  g^{ac}g^{ef}(\curl \F^\top)_{cdf}\nb_a\F^{ds}\nb_eu_sd\mu_g\no\\
       &+4\int_{\D\Omega} N_e\F^{ef}g^{ac}g^{bd}(\curl u)_{cd}\nb_a\F_{bf}d\mu_\gamma\label{energy.1.9}\\
    &+\int_\Omega \left(g^{bd}\gamma^{ae}\nb_a u_b\nb_e u_d+g^{bd}g^{cf}\gamma^{ae}\nb_a \F_{bc}\nb_e \F_{df}\right)\tr h d\mu_g\no\\
   &+\int_\Omega \left(|\curl u|^2+|\curl \F^\top|^2\right)\tr h d\mu_g.\no
  \end{align}
  Since $p=0$ on the boundary $\D\Omega$, it follows that $\bnb p=0$, i.e., $\gamma_a^d\nb_d p=0$, and then $\gamma^{ae}\nb_a p=g^{ce}\gamma_c^a\nb_a p=0$ on  $\D\Omega$. In addition,  $N\cdot\F^\top=0$ on $\D\Omega$. Thus, the integrals in \eqref{energy.1.2} and \eqref{energy.1.9} vanish.

 For the term \eqref{energy.1.3}, we first have from \eqref{2ndfundform} and \eqref{gammaauc},  
  $$ \theta_{ab}=\detail{(\delta_a^c-N_aN^c)\nb_c N_b=\nb_a N_b-N_a\nb_N N_b=}\nb_a N_b$$
  since  $\nb_N N=0$ in geodesic coordinates, and then
  \begin{align*}
    \nb_b\gamma^{ae}&=\nb_b (g^{ae}-N^aN^e)=-\nb_b(N^aN^e)\\
    &=-(\nb_b N^a)N^e-(\nb_b N^e)N^a
    =-\theta_b^a N^e-\theta_b^eN^a.
  \end{align*}
  Thus,  by the H\"older inequality, \eqref{eq.1energy1.1} and Lemma \ref{lem.CL00lem5.5}, one has
  \begin{align*}
   & |\eqref{energy.1.3}|\\ &\ls  CK\left(\norm{\nb u}_{L^2(\Omega)}\norm{\nb p}_{L^\infty(\Omega)}(\vol\Omega)^{1/2}+\norm{\nb u}_{L^\infty(\Omega)}\norm{\F}_{L^2(\Omega)}\norm{\nb \F}_{L^2(\Omega)}\right)\\
    &\ls CK M\left((\vol\Omega)^{1/2}+ E_0^{1/2}(0)\right)E_1^{1/2}(t).
  \end{align*}
  
  For other terms, we can use the H\"older inequality directly. Hence, we obtain
  \begin{align*}
   & \frac{d}{dt}E_1(t)\\ 
 &  \ls  CK M\left((\vol\Omega)^{1/2}+ E_0^{1/2}(0)\right)E_1^{1/2}(t)\\
    &\quad+C\norm{\nb u}_{L^\infty(\Omega)}\left(\norm{\nb u}_{L^2(\Omega)}^2+\norm{\nb \F}_{L^2(\Omega)}^2+\norm{\curl u}_{L^2(\Omega)}^2+\norm{\curl\F^\top}_{L^2(\Omega)}^2\right)\\
&    \ls C K M\left((\vol\Omega)^{1/2}+ E_0^{1/2}(0)\right)E_1^{1/2}(t)
    +CM E_1(t).
  \end{align*}
  By the Gronwall inequality, we have
  \begin{align*}
    E_1^{1/2}(t)\ls e^{CMt/2}E_1^{1/2}(0)+C K\left((\vol\Omega)^{1/2}+ E_0^{1/2}(0)\right)\left(e^{CMt/2}-1\right),
  \end{align*}
  which yields the desired estimate.
\end{proof}

\bigskip

\section{The General $r$-th Order Energy Estimates}\label{sec.rorder}

In this section, we establish the higher order energy estimates.
In view of \eqref{eq.Dt}, \eqref{eq.DtDir} and \eqref{eld11.1}, one has
\begin{align*}
  D_t\nb^r u_a=\detail{&D_t\nb_{a_1}\cdots \nb_{a_r}u_a=D_t\nb_{a_1}\cdots \nb_{a_{r-1}}\left(\frac{\D x^i}{\D y^a}\frac{\D x^{i_r}}{\D y^{a_r}}\frac{\D v_i}{\D x^{i_r}}\right)\\
  =&D_t\Big(\frac{\D x^i}{\D y^a}\frac{\D x^{i_1}}{\D y^{a_1}}\cdots\frac{\D x^{i_r}}{\D y^{a_r}}\frac{\D^r v_i}{\D x^{i_1}\cdots\D x^{i_r}}\Big)\\
  =&\frac{\D x^i}{\D y^a}\frac{\D x^{i_1}}{\D y^{a_1}}\cdots\frac{\D x^{i_r}}{\D y^{a_r}}\Bigg(D_t\frac{\D^r v_i}{\D x^{i_1}\cdots\D x^{i_r}}+\frac{\D v^l}{\D x^{i_1}}\frac{\D^r v_i}{\D x^{l}\cdots\D x^{i_r}}+\cdots\\
  &\qquad\qquad\qquad\qquad\qquad+\frac{\D v^l}{\D x^{i_r}}\frac{\D^r v_i}{\D x^{i_1}\cdots\D x^{l}}+\frac{\D v^l}{\D x^{i}}\frac{\D^r v_l}{\D x^{i_1}\cdots\D x^{x_r}}\Bigg)\\
  =&\frac{\D x^i}{\D y^a}\frac{\D x^{i_1}}{\D y^{a_1}}\cdots\frac{\D x^{i_r}}{\D y^{a_r}}\Big([D_t,\D^r]v_i+\D^rD_t v_i\Big)+\nb u\cdot\nb^ru_a+\nb_a u^c\nb^r u_c\\
  =}&-\nb^r\nb_a p+\nb_a u^c\nb^r u_c -\sum_{s=1}^{r-1}\left(r\atop s+1\right)(\nb^{1+s}u)\cdot \nb^{r-s}u_a\\
  &+\sum_{s=0}^r\left(r\atop s\right) \nb^s\F^{cb}\nb^{r-s}\nb_c \F_{ab},
\end{align*}
where
\begin{align*}
  \left(\nb^s\F^{cb}\nb^{r-s} \nb_c\F_{ab}\right)_{a_1\cdots a_r}=\sum_{\Sigma_r}\nb_{a_{\sigma_1}\cdots a_{\sigma_s}}^s \F^{cb} \nb_{a_{\sigma_{s+1}}\cdots a_{\sigma_r}}^{r-s}\nb_c\F_{ab}.
\end{align*}
Then, using $\dv \F^\top=0$, we obtain,  for $r\gs 2$,
\begin{align}\label{eq.r.u}
  D_t\nb^r u_a+\nb^r\nb_a p
  =&(\curl u)_{ac}\nb^r u^c+\sgn(2-r)\sum_{s=1}^{r-2}\left(r\atop s+1\right)(\nb^{1+s}u)\cdot \nb^{r-s}u_a\no\\
  &+\nb_c\left(\F^{cb}\nb^r\F_{ab}\right)+\sum_{s=1}^r\left(r\atop s\right) \nb^s\F^{cb}\nb^{r-s}\nb_c \F_{ab}.
\end{align}
Similarly, by $\dv\F^\top=0$ again, we have,  for $r\gs 2$,
\begin{align}\label{eq.nb.beta}
 D_t\nb^r \F_{ab}=&D_t\Big(\frac{\D x^i}{\D y^a}\frac{\D x^j}{\D y^b}\frac{\D x^{i_1}}{\D y^{a_1}}\cdots\frac{\D x^{i_r}}{\D y^{a_r}}\frac{\D^r F_{ij}}{\D x^{i_1}\cdots\D x^{i_r}}\Big)\no\\
 =&\frac{\D x^i}{\D y^a}\frac{\D x^j}{\D y^b}\frac{\D x^{i_1}}{\D y^{a_1}}\cdots\frac{\D x^{i_r}}{\D y^{a_r}}\Big(D_t\frac{\D^r F_{ij}}{\D x^{i_1}\cdots\D x^{i_r}}+\frac{\D v^l}{\D x^{i_1}}\frac{\D^r F_{ij}}{\D x^{l}\cdots\D x^{i_r}} +\cdots\no\\
 &\qquad+\frac{\D v^l}{\D x^{i_r}}\frac{\D^r F_{ij}}{\D x^{x_1}\cdots\D x^{l}}+\frac{\D v^l}{\D x^{i}}\frac{\D^r F_{lj}}{\D x^{x_1}\cdots\D x^{x_r}}+\frac{\D v^l}{\D x^{j}}\frac{\D^r F_{il}}{\D x^{x_1}\cdots\D x^{x_r}}\Big)\no\\
 =&\frac{\D x^i}{\D y^a}\frac{\D x^j}{\D y^b}\frac{\D x^{i_1}}{\D y^{a_1}}\cdots\frac{\D x^{i_r}}{\D y^{a_r}}\Big([D_t,\D^r]F_{ij}+\D^r D_tF_{ij}\Big)\no\\
 &+\nb u\cdot \nb^r\F_{ab}+\nb_au^c\nb^r\F_{cb}+\nb_bu^c\nb^r\F_{ac}\no\\
 =&\nb_au^c\nb^r\F_{cb}+\nb_bu^c\nb^r\F_{ac}-\nb^r u^c\nb_c\F_{ab}+\nb_c\left(g^{cd}\F_{db}\nb^r u_a\right)\no\\
  &-\sgn(2-r)\sum_{s=1}^{r-2}\left(r\atop s+1\right)(\nb^{1+s}u)\cdot \nb^{r-s}\F_{ab}\no \\
  &+\sum_{s=1}^{r}\left(r\atop s\right) g^{cd}\nb^s\F_{db}\nb^{r-s} \nb_c u_a.
\end{align}

From Lemmas \ref{lem.CL00lem2.1} and \ref{lem.CL00lem3.9}, and \eqref{eq.r.u}, it follows that
\begin{align}
  &D_t\left(g^{bd}\gamma^{af}\gamma^{AF}\nb_A^{r-1}\nb_a u_b\nb_F^{r-1}\nb_f u_d\right)\no\\
  \detail{=&(D_tg^{bd})\gamma^{af}\gamma^{AF}\nb_A^{r-1}\nb_a u_b\nb_F^{r-1}\nb_f u_d+rg^{bd}(D_t\gamma^{af})\gamma^{AF}\nb_A^{r-1}\nb_a u_b\nb_F^{r-1}\nb_f u_d\no\\
  &+2g^{bd}\gamma^{af}\gamma^{AF}D_t(\nb_A^{r-1}\nb_a u_b)\nb_F^{r-1}\nb_f u_d\no\\}
  =&-2\nb_c u_e\gamma^{af}\gamma^{AF}\nb_A^{r-1}\nb_a u^c\nb_F^{r-1}\nb_f u^e-2r\nb_c u_e\gamma^{ac}\gamma^{ef}\gamma^{AF}\nb_A^{r-1}\nb_a u^d\nb_F^{r-1}\nb_f u_d\no\\
  &-2\nb_b\left(\gamma^{af}\gamma^{AF}\nb_F^{r-1}\nb_f u^b\nb_A^{r-1}\nb_a p\right) \label{eq.r.4}\\
  &+2\nb_b\left(\gamma^{af}\gamma^{AF}\right)\nb_F^{r-1}\nb_f u^b\nb_A^{r-1}\nb_a p+2\gamma^{af}\gamma^{AF}\nb_F^{r-1}\nb_f u^b(\curl u)_{bc}\nb_A^{r-1}\nb_a u^c\no\\
  &+2\sgn(2-r)\gamma^{af}\gamma^{AF}\nb_F^{r-1}\nb_f u_d\sum_{s=1}^{r-2}\left(r\atop s+1\right)\left((\nb^{s+1}u)\cdot \nb^{r-s}u^d\right)_{Aa}\no\\
  &+2\gamma^{af}\gamma^{AF}\nb_F^{r-1}\nb_f u^d\nb_c\left(\F^{cb}\nb_{A}^{r-1}\nb_a\F_{db}\right)\label{eq.r.1}\\
  &+2\gamma^{af}\gamma^{AF}\nb_F^{r-1}\nb_f u^d\sum_{s=1}^r\left(r\atop s\right) \left(\nb^s\F^{cb}\nb^{r-s}\nb_c \F_{db}\right)_{Aa}.\no
\end{align}
Similarly, we have
\begin{align}
  &D_t\left(g^{bd}g^{ce}\gamma^{af}\gamma^{AF}\nb_A^{r-1}\nb_a \F_{bc}\nb_F^{r-1}\nb_f \F_{de}\right)\no\\
  \detail{=&D_t(g^{bd})g^{ce}\gamma^{af}\gamma^{AF}\nb_A^{r-1}\nb_a \F_{bc}\nb_F^{r-1}\nb_f \F_{de}+g^{bd}D_t(g^{ce})\gamma^{af}\gamma^{AF}\nb_A^{r-1}\nb_a \F_{bc}\nb_F^{r-1}\nb_f \F_{de}\no\\
  &+rD_t(\gamma^{af})\gamma^{AF}\nb_A^{r-1}\nb_a \F_{bc}\nb_F^{r-1}\nb_f \F^{bc}+2\gamma^{af}\gamma^{AF}D_t(\nb_A^{r-1}\nb_a \F_{bc})\nb_F^{r-1}\nb_f \F^{bc}\no\\}
  =&-2\nb_k u^m\gamma^{af}\gamma^{AF}\nb_A^{r-1}\nb_a \F^{kc}\nb_F^{r-1}\nb_f \F_{mc}-2\nb_k u^m\gamma^{af}\gamma^{AF}\nb_A^{r-1}\nb_a \F^{dk}\nb_F^{r-1}\nb_f \F_{dm}\no\\
  &-2r\nb_d u_e\gamma^{ad}\gamma^{ef}\gamma^{AF}\nb_A^{r-1}\nb_a \F^{bc}\nb_F^{r-1}\nb_f \F_{bc}+2\gamma^{af}\gamma^{AF}\nb_A^{r-1}\nb_a \F_{ec} \nb_bu^e\nb_F^{r-1}\nb_f\F^{bc}\no\\
  &+2\gamma^{af}\gamma^{AF}\nb_A^{r-1}\nb_a \F_{be} \nb_cu^e\nb_F^{r-1}\nb_f\F^{bc}-2\gamma^{af}\gamma^{AF}\nb_F^{r-1}\nb_f \F^{bc}\nb_A^{r-1}\nb_a u^e\nb_e\F_{bc}\no\\
  &+2\gamma^{af}\gamma^{AF}\nb_F^{r-1}\nb_f \F^{bc}g^{ed}\F_{dc}\nb_e\nb_{A}^{r-1}\nb_a u_b\label{eq.r.2}\\
  &+2\sgn(2-r)\gamma^{af}\gamma^{AF}\nb_F^{r-1}\nb_f \F^{bc}\sum_{s=1}^{r-2}\left(r\atop s+1\right)\left((\nb^{1+s}u)\cdot \nb^{r-s}\F_{bc}\right)_{Aa}\no\\
  &+2\gamma^{af}\gamma^{AF}\nb_F^{r-1}\nb_f \F^{bc}\sum_{s=1}^{r}\left(r\atop s\right)\left( g^{ed}\nb^s\F_{dc}\nb^{r-s} \nb_e u_b\right)_{Aa}.\no
\end{align}

For \eqref{eq.r.1} and \eqref{eq.r.2}, one has
\begin{align}
\eqref{eq.r.1}+\eqref{eq.r.2}=\detail{&2\gamma^{af}\gamma^{AF}\nb_A^{r-1}\nb_a u^b\nb_e\left(\F^{ec}\nb_{F}^{r-1}\nb_f\F_{bc}\right)\no\\
&+2\gamma^{af}\gamma^{AF}\nb_e\nb_{A}^{r-1}\nb_a u^b\F^{ec}\nb_F^{r-1}\nb_f \F_{bc}\no\\
=}&2\nb_e\left(\gamma^{af}\gamma^{AF}\nb_A^{r-1}\nb_a u^b\F^{ec}\nb_{F}^{r-1}\nb_f\F_{bc}\right)\label{eq.r.3}\\
&-2\nb_e(\gamma^{af}\gamma^{AF})\nb_A^{r-1}\nb_a u^b\F^{ec}\nb_{F}^{r-1}\nb_f\F_{bc}.\label{eq.r.5}
\end{align}
The boundary integral stemmed from the integration of \eqref{eq.r.3} over $\Omega$ will vanish since it involves the term $N_e\F^{ec}$ which is zero on the boundary. Since \eqref{energy.1.2}, especially the integral involving $p$, vanishes, we do not need the boundary integral in the first order energy $E_1(t)$.  However, the boundary integral derived from the integral of \eqref{eq.r.4} over $\Omega$ will be out of control for higher order energies. Thus, we have to include a boundary integral to overcome this difficulty.

Define the $r$-th order energy for an integer $r\gs 2$ as
\begin{align*}
  E_r(t)=&\int_\Omega g^{bd}\gamma^{af}\gamma^{AF}\nb_A^{r-1}\nb_a u_b\nb_F^{r-1}\nb_f u_d d\mu_g+\int_\Omega |\nb^{r-1}\curl u|^2 d\mu_g\\
  &+\int_\Omega g^{bd}g^{ce}\gamma^{af}\gamma^{AF}\nb_A^{r-1}\nb_a \F_{bc}\nb_F^{r-1}\nb_f \F_{de} d\mu_g+\int_\Omega |\nb^{r-1}\curl \F^\top|^2 d\mu_g\\
  &+\int_{\D\Omega} \gamma^{af}\gamma^{AF}\nb_A^{r-1}\nb_a  p\nb_F^{r-1}\nb_f  p\, \vartheta d\mu_\gamma,
\end{align*}
where $\vartheta=1/(-\nb_N p)$ as before.  Then, we have the following theorem.

\begin{theorem}\label{thm.renergy}
  For the integer $r\in \{2,\cdots,n+1\}$,   there exists a constant $T>0$ such that, for any smooth solution to system \eqref{eld41} for $0\ls t\ls T$ satisfying
  \begin{align}
   |\F|\ls& M_1 \quad\text{for } r=2,&&\text{in } [0,T]\times \Omega,\label{eq.2energy81}\\
   \quad |\nb p|\ls M, \quad |\nb u|\ls& M, \quad |\nb \F|\ls M,  &&\text{in } [0,T]\times \Omega,\label{eq.2energy8}\\
    |\theta|+1/\iota_0\ls &K,&&\text{on } [0,T]\times \D\Omega,\label{eq.2energy9}\\
    -\nb_N p\gs \eps>&0, &&\text{on } [0,T]\times \D\Omega,\label{eq.2energy91}\\
    |\nb^2p|+|\nb_ND_tp|\ls& L,&&\text{on } [0,T]\times \D\Omega,\label{eq.2energy92}
  \end{align}
  the following estimate holds for any $t\in[0,T]$,
  \begin{align}\label{eq.renergy}
  E_r(t)\ls e^{C_1t}E_r(0)+C_2\left(e^{C_1t}-1\right),
\end{align}
where the constants $C_1$ and $C_2$ depend on $K$, $K_1$, $M$, $M_1$, $L$, $1/\eps$, $\vol\Omega$, $E_0(0)$, $E_1(0)$, $\cdots$, and $E_{r-1}(0)$.
\end{theorem}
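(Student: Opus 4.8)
The plan is to reproduce, at order $r$, the mechanism of the proof of Theorem~\ref{thm.1energy}: bound $\tfrac{d}{dt}E_r(t)$ by $C_1E_r(t)+C_2$ and close by Gronwall. First I would differentiate $E_r(t)$ in time. Using Lemma~\ref{lem.CL00lem2.1} for $D_tg^{ab}$, $D_t\gamma^{ab}$ and $D_td\mu_g$, together with the identities \eqref{eq.r.u}, \eqref{eq.nb.beta}, the two displayed formulas above for $D_t\big(g^{bd}\gamma^{af}\gamma^{AF}\nb_A^{r-1}\nb_au_b\nb_F^{r-1}\nb_fu_d\big)$ and $D_t\big(g^{bd}g^{ce}\gamma^{af}\gamma^{AF}\nb_A^{r-1}\nb_a\F_{bc}\nb_F^{r-1}\nb_f\F_{de}\big)$, and the analogous expressions for $D_t|\nb^{r-1}\curl u|^2$ and $D_t|\nb^{r-1}\curl\F^\top|^2$ (got by commuting $D_t$ past $\nb^{r-1}\curl$ via \eqref{eq.commutator} and \eqref{eq.CL00lem2.4.1}; the curl pieces are indispensable because $\nb^r$ of a gradient is not controlled by the $u$-part of $E_r$), this writes $\tfrac{d}{dt}E_r(t)$ as interior integrals, perfect-divergence integrals stemming from \eqref{eq.r.3}, from \eqref{eq.r.4}, and from the divergences in the curl-evolutions, and the time derivative of the boundary part of $E_r$. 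After applying the Gauss formula \eqref{Gauss}, the boundary integrands produced by \eqref{eq.r.3} and by the curl divergences all carry the factor $N_e\F^{ef}$, which vanishes on $\D\Omega$ by \eqref{eld41.4}; hence they drop out, exactly as $N\cdot\F^\top=0$ was used in the first-order estimate.

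What survives dangerously is the boundary integral $B=-2\int_{\D\Omega}N_b\,\gamma^{af}\gamma^{AF}\,\nb_F^{r-1}\nb_fu^b\,\nb_A^{r-1}\nb_ap\,d\mu_\gamma$ coming from \eqref{eq.r.4}, together with the leading term $\widetilde B=2\int_{\D\Omega}\gamma^{af}\gamma^{AF}(D_t\nb_A^{r-1}\nb_ap)\,\nb_F^{r-1}\nb_fp\,\vartheta\,d\mu_\gamma$ produced by differentiating the boundary energy; the remaining pieces of $\tfrac{d}{dt}$ of the boundary energy, in which $D_t$ hits $\gamma^{af}\gamma^{AF}$, $\vartheta$ or $d\mu_\gamma$, I would bound using Lemma~\ref{lem.CL00lem2.1}, \eqref{eq.Dtpcommu}, $D_t\vartheta=\vartheta^2D_t\nb_N p$, the hypotheses \eqref{eq.2energy9}, \eqref{eq.2energy91}, \eqref{eq.2energy92}, and the two-sided bound $1/M\ls\vartheta\ls1/\eps$ on $\D\Omega$, so that they are $\ls C(K,L,M,1/\eps)\,E_r(t)$. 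For the genuine cancellation I would use the momentum equation \eqref{eld41.1} as $D_tu_a=-\nb_ap+G_a$ with $G_a:=\nb_c\F_{ab}\F^{cb}+u^c\nb_au_c$: differentiating by $\nb^{r-1}$, contracting an index with $N$, commuting $D_t$ past $\nb_F^{r-1}\nb_f$ via \eqref{eq.commutator} and past the $N$-contraction (so $D_tN$ appears, controlled on $\D\Omega$ by $|\nb u|$), and using $N^b\nb_bp=\nb_Np=-1/\vartheta$ with $-\nb_Np\gs\eps$, one should reach on $\D\Omega$ an identity of the shape $\gamma^{af}N_b\,\nb_F^{r-1}\nb_fu^b=\gamma^{af}\vartheta\,D_t\nb_F^{r-1}\nb_fp+R_f$, where $R_f$ contains only terms of order $\ls r$ that, after contraction with $\nb_A^{r-1}\nb_ap$ and integration over $\D\Omega$, are $\ls C\,(E_r^{1/2}+1)\,E_r^{1/2}$; substituting this into $B$ makes the principal parts of $B$ and $\widetilde B$ cancel.

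To close, I would estimate every remaining interior integral by Hölder's inequality together with the a priori bounds \eqref{eq.2energy8} (and \eqref{eq.2energy81} when $r=2$) and the interpolation lemmas of \cite{CL00} recalled in the appendix, bounding the curvature factor $\nb\gamma^{af}=-\theta^a_bN^f-\theta^f_bN^a$ by $K$ as in Theorem~\ref{thm.1energy}; the lower-order pieces — the $\sum_{s=1}^{r-2}$ commutator terms, the curl-manipulation remainders, and the boundary remainder $R_f$ — I would absorb into quantities depending on $\vol\Omega$, $E_0(0),\dots,E_{r-1}(0)$, using the lower-order estimates (established inductively on $r$) to bound $E_0,\dots,E_{r-1}$ on $[0,T]$. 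Gagliardo--Nirenberg interpolation reduces every factor carrying more than one but fewer than $r$ derivatives to $\|\nb(\cdot)\|_{L^\infty}$ (bounded by $M$ or $K$) times $E_r^{1/2}$; this is where $r\ls n+1$ with $n=2,3$ enters, in particular to control $\|\F\|_{L^\infty(\Omega)}$ by $E_0,\dots,E_{r-1}$ through Sobolev embedding when $r\gs3$ — for $r=2$ the embedding $H^1\not\hookrightarrow L^\infty(\R^n)$ fails, which is precisely why \eqref{eq.2energy81} is imposed. Collecting everything yields $\tfrac{d}{dt}E_r(t)\ls C_1E_r(t)+C_2$ with $C_1,C_2$ of the stated dependence, and Gronwall's inequality gives \eqref{eq.renergy}.

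\textbf{The step I expect to be the main obstacle} is the boundary cancellation: producing the precise algebraic matching between $B$ (from integrating \eqref{eq.r.4}) and the leading term $\widetilde B$ of the time derivative of the boundary energy, and verifying that the genuinely new $\F$-contributions — the term $\nb_c\F_{ab}\F^{cb}$ in the momentum equation and the consequent need to control $\nb^r\F$ and $\F^{cb}\nb^{r-1}\nb_c\F_{ab}$ on $\D\Omega$ — do not spoil it. I anticipate this will require the boundary condition $N\cdot\F^\top=0$ a second time, combined with a trace (elliptic) estimate for $\F$, to keep the extra boundary terms at the level of $E_r$ and lower-order energies.
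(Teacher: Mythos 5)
Your overall architecture coincides with the paper's: differentiate $E_r$, let the boundary terms carrying $N_e\F^{ec}$ drop by \eqref{eld41.4}, cancel the dangerous pressure boundary term coming from \eqref{eq.r.4} against the leading part of the time derivative of the boundary energy, and close by H\"older, interpolation, induction on $r$ and Gronwall. The cancellation identity you posit is indeed true, but your route to it is off: it does not come from the momentum equation \eqref{eld41.1} (applying $\nb^{r-1}$ to that equation and contracting with $N$ produces $D_t\nb^r u$ and $\nb^{r+1}p$, not $\nb^r u$ and $D_t\nb^r p$). In the paper it follows from two facts only: since $p=0$ on $\D\Omega$ the \emph{full} gradient is normal, $\nb_b p=-\vartheta^{-1}N_b$ as in \eqref{eq.nunb} (knowing merely $N^b\nb_b p=-1/\vartheta$ is not sufficient), and then in \eqref{eq.r.e11} the resulting term $\nb^{r}u\cdot\nb p$ is exactly the top ($s=r-1$) term of the commutator $[D_t,\nb^r]p$ from \eqref{eq.commutator}, so that $D_t\nb^r p-\vartheta^{-1}N_b\nb^r u^b=\nb^r D_tp+\sgn(2-r)\sum_{s=1}^{r-2}\binom{r}{s+1}(\nb^{s+1}u)\cdot\nb^{r-s}p$, which is \eqref{eq.Piterm1}; no use of the momentum equation and no $D_tN$ terms occur here.

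The genuine gap is in what you do with the remainder. After the cancellation the dominant leftover is $\Pi\nb^r D_t p$ on $\D\Omega$, and the interior integral \eqref{eq.r.e12} carries $\nb^r p$ in $\Omega$; neither quantity is part of $E_r$ nor of the a priori assumptions, and neither is a ``lower-order piece'' absorbable into $E_0(0),\dots,E_{r-1}(0)$ as your closing step suggests. Controlling them linearly in $E_r^{1/2}$ is the bulk of the paper's proof (Steps 1.1--1.2): one must use that $D_tp=0$ on $\D\Omega$ (never stated in your plan) together with the projection estimate \eqref{eq.CL00prop5.9.1} and the elliptic estimate \eqref{eq.CL00prop5.8.2}, compute $\Delta p$ and $\Delta D_tp$ from the equations \eqref{eq.2e.p} and the commutators (this is where the new elastic terms of the schematic form $\F\,\nb^{2}u$ and $\F\,\nb^{2}\F$ enter and must be checked to contribute only linearly in $E_r^{1/2}$, cf. \eqref{eq.nbdtp}), and, for $r=3,4$, first derive the $L^2(\D\Omega)$ bounds on $\theta$ and $\bnb\theta$ from $\Pi\nb^2p$, $\Pi\nb^3p$ and the sign condition \eqref{eq.2energy91}, with a smallness absorption to avoid circular dependence on $E_r$ (cf. \eqref{eq.est.nbrp}--\eqref{eq.nb4p}). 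Your anticipated fix --- invoking $N\cdot\F^\top=0$ a second time plus a trace estimate for $\F$ --- is not the missing ingredient; what is needed is the elliptic machinery for $p$ and $D_tp$ and the induced second-fundamental-form bounds, not additional boundary control of $\F$.
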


\begin{proof}
The derivative of $E_r(t)$ with respect to $t$  is
  \begin{align}
    \frac{d}{dt}E_r(t)=&\int_\Omega D_t\left(g^{bd}\gamma^{af}\gamma^{AF}\nb_A^{r-1}\nb_a u_b\nb_F^{r-1}\nb_f u_d\right) d\mu_g\label{eq.r.e1}\\
  &+\int_\Omega D_t\left(g^{bd}g^{ce}\gamma^{af}\gamma^{AF}\nb_A^{r-1}\nb_a \F_{bc}\nb_F^{r-1}\nb_f \F_{de}\right) d\mu_g\label{eq.r.e2}\\
    &+\int_\Omega D_t|\nb^{r-1}\curl u|^2 d\mu_g
+\int_\Omega D_t|\nb^{r-1}\curl \F^\top|^2 d\mu_g\label{eq.r.e3}\\
    &+\int_\Omega g^{bd}\gamma^{af}\gamma^{AF}\nb_A^{r-1}\nb_a u_b\nb_F^{r-1}\nb_f u_d \tr h d\mu_g\label{eq.r.e4}\\
    &+\int_\Omega |\nb^{r-1}\curl u|^2 \tr hd\mu_g
  +\int_\Omega |\nb^{r-1}\curl \F^\top|^2 \tr hd\mu_g\label{eq.r.e5}\\
  &+\int_\Omega g^{bd}g^{ce}\gamma^{af}\gamma^{AF}\nb_A^{r-1}\nb_a \F_{bc}\nb_F^{r-1}\nb_f \F_{de} \tr h d\mu_g\label{eq.r.e6}\\
  &+\int_{\D\Omega} D_t\left(\gamma^{af}\gamma^{AF}\nb_A^{r-1}\nb_a  p\nb_F^{r-1}\nb_f  p\right)\, \vartheta d\mu_\gamma\label{eq.r.e7}\\
   &+\int_{\D\Omega} \gamma^{af}\gamma^{AF}\nb_A^{r-1}\nb_a  p\nb_F^{r-1}\nb_f  p\left(\frac{\vartheta_t}{\vartheta}+\tr h-h_{NN}\right)\, \vartheta d\mu_\gamma.\label{eq.r.e8}
  \end{align}

\subsection*{Step 1:  Estimate the integrals \eqref{eq.r.e1}, \eqref{eq.r.e2} and \eqref{eq.r.e7}} \quad   \\
  From the previous derivations for the integrands in \eqref{eq.r.e1} and \eqref{eq.r.e2}, \eqref{eq.r.3}, \eqref{eq.r.5}  and
\begin{align*}
  &D_t\left(\gamma^{af}\gamma^{AF}\nb_A^{r-1}\nb_a  p\nb_F^{r-1}\nb_f  p\right)\\
  =&-2r\nb_c u_e\gamma^{ac}\gamma^{ef}\gamma^{AF}\nb_A^{r-1}\nb_a  p\nb_F^{r-1}\nb_f  p+2 \gamma^{af}\gamma^{AF}\nb_A^{r-1}\nb_a  p D_t\left(\nb_F^{r-1}\nb_f  p\right),
\end{align*}
we have
\begin{align}
  &\eqref{eq.r.e1}+\eqref{eq.r.e2}+\eqref{eq.r.e7}\no\\
  \ls& C\left(\norm{\nb u}_{L^\infty(\Omega)}+\norm{\nb \F}_{L^\infty(\Omega)}\right) E_r(t)\no\\
  &+C E_r^{1/2}(t)\sum_{s=1}^{r-2}\norm{\nb^{s+1} u}_{L^4(\Omega)}\left(\norm{\nb^{r-s} u}_{L^4(\Omega)}+\norm{\nb^{r-s} \F}_{L^4(\Omega)}\right)\label{eq.r.e111}\\
  &+C E_r^{1/2}(t)\sum_{s=2}^{r-1}\norm{\nb^s \F}_{L^4(\Omega)}\left(\norm{\nb^{r-s+1}u}_{L^4(\Omega)}+\norm{\nb^{r-s+1} \F}_{L^4(\Omega)}\right)\label{eq.r.e112}\\
  &+2\int_{\D\Omega}\gamma^{af}\gamma^{AF}\nb_{Aa}^{r} p \left(D_t\nb_{Ff}^{r}  p-\frac{1}{\vartheta}N_b\nb_{Ff}^{r} u^b\right) \vartheta d\mu_\gamma \label{eq.r.e11}\\
  &+2\int_{\Omega}\nb_b\left(\gamma^{af}\gamma^{AF}\right)\nb_F^{r-1}\nb_f u^b\nb_A^{r-1}\nb_a p d\mu_g\label{eq.r.e12}\\
  &+\int_{\D\Omega} N_e\gamma^{af}\gamma^{AF}\nb_A^{r-1}\nb_a u^b\F^{ec}\nb_{F}^{r-1}\nb_f\F_{bc} d\mu_\gamma\label{eq.r.e13}\\
  &-\int_{\Omega} \nb_e(\gamma^{af}\gamma^{AF})\nb_A^{r-1}\nb_a u^b\F^{ec}\nb_{F}^{r-1}\nb_f\F_{bc} d\mu_g.\label{eq.r.e14}
\end{align}

Since $N\cdot\F^\top=0$ on $\D\Omega$,  \eqref{eq.r.e13} vanishes.  From Lemma \ref{lem.CL00lemA.4}, we see that, for $\iota_1\gs 1/K_1$,  
\begin{align}\label{eq.CllemmaA.4}
  \norm{\F}_{L^\infty(\Omega)}\ls C\sum_{0\ls s\ls 2} K_1^{n/2-s} \norm{\nb^s \F}_{L^2(\Omega)}\ls C(K_1)\sum_{s=0}^2 E_s^{1/2}(t).
\end{align}
Using the H\"older inequality and the assumption \eqref{eq.2energy8}, we obtain for any integers $r\gs 3$,
\begin{align}
  \abs{\eqref{eq.r.e14}}\ls &CK\norm{\F}_{L^\infty(\Omega)}E_r(t)\ls C(K,K_1) \left(\sum_{s=0}^2 E_s^{1/2}(t)\right)E_r(t).
\end{align}
For $r=2$, by \eqref{eq.2energy81}, one has
\begin{align}
  \abs{\eqref{eq.r.e14}}\ls &CK\norm{\F}_{L^\infty(\Omega)}E_r(t)\ls C(K,M_1) E_r(t).
\end{align}

\subsubsection*{Step 1.1: Estimate \eqref{eq.r.e12}} \quad \\
From  H\"older's inequality, we have
\begin{align}\label{eq.up}
  \abs{\eqref{eq.r.e12}}\ls CKE_r^{1/2}(t)\norm{\nb^r  p}_{L^2(\Omega)}.
\end{align}

It follows from \eqref{eld41.1} and \eqref{eq.CL002.4.2} that
\begin{align}\label{eq.2e.p}
  \Delta p=-\nb_a u^b\nb_b u^a+g^{cb}\nb_a \F_{cd} \nb_b \F^{ad}.
\end{align}
Then,   for $r\gs 2$, 
\begin{align*}
  \nb^{r-2}\Delta  p=&-\sum_{s=0}^{r-2}\left(r-2\atop s\right) \nb^s\nb_a u^b\nb^{r-2-s}\nb_b u^a\\
  &+\sum_{s=0}^{r-2}\left(r-2\atop s\right)g^{cb}\nb^s\nb_a \F_{cd}\nb^{r-2-s}\nb_b \F^{ad}.
\end{align*}
In view of \eqref{eq.CllemmaA.4},  one has,  for $s\gs 0$,
\begin{align}\label{eq.r.einfbeta}
  \norm{\nb^s \F}_{L^\infty(\Omega)}\ls &C\sum_{\ell=0}^{2} K_1^{n/2-\ell}\norm{\nb^{\ell+s}\F}_{L^2(\Omega)}
  \ls C(K_1)\sum_{\ell=0}^{2} E_{s+\ell}^{1/2}(t),
\end{align}
and
\begin{align}\label{eq.r.einfu}
  \norm{\nb^s u}_{L^\infty(\Omega)}\ls C(K_1)\sum_{\ell=0}^{2} E_{s+\ell}^{1/2}(t).
\end{align}
From the H\"older inequality, \eqref{eq.r.einfbeta} and \eqref{eq.r.einfu}, we have, for $r\in\{3,4\}$,  
\begin{align}\label{eq.deltap34}
  &\norm{\nb^{r-2}\Delta p}_{L^2(\Omega)}\no \\
  &\ls  C\sum_{s=0}^{r-2} \norm{\nb^s\nb_a u^b\nb^{r-2-s}\nb_b u^a}_{L^2(\Omega)}\no\\
  &\quad+C\sum_{s=0}^{r-2}\norm{g^{cb}\nb^s\nb_a \F_{cd}\nb^{r-2-s}\nb_b \F^{ad}}_{L^2(\Omega)}\no\\
 & \ls C\norm{\nb u}_{L^\infty(\Omega)}\norm{\nb^{r-1} u}_{L^2(\Omega)}+C\norm{\nb \F}_{L^\infty(\Omega)}\norm{\nb^{r-1} \F}_{L^2(\Omega)}\no\\
  &\quad+(r-3)C\left(\norm{\nb^{2} u}_{L^\infty(\Omega)}\norm{\nb^{2} u}_{L^2(\Omega)}+\norm{\nb^{2} \F}_{L^\infty(\Omega)}\norm{\nb^{2} \F}_{L^2(\Omega)}\right)\no\\
 & \ls C(K_1)\sum_{\ell=1}^{r-1} E_\ell(t)+C(K_1)E_2^{1/2}(t)E_r^{1/2}(t).
\end{align}
For the case $r=2$, we have the following estimate from the assumption \eqref{eq.2energy8} and the H\"older inequality:
\begin{align}\label{eq.deltap2}
  \norm{\Delta p}_{L^2(\Omega)}\ls&C\norm{\nb u}_{L^2(\Omega)}\norm{\nb u}_{L^\infty(\Omega)}+C\norm{\nb \F}_{L^2(\Omega)}\norm{\nb \F}_{L^\infty(\Omega)}
  \ls CME_1^{1/2}(t),
\end{align}
which is a lower order energy term.
Hence, from \eqref{eq.CL00prop5.8.2}, \eqref{eq.deltap34} and \eqref{eq.deltap2}, we have for any real number $\delta_r>0$,
\begin{align}\label{eq.est.nbrp}
\norm{\nb^r  p}_{L^2(\Omega)}
  \ls &\delta_r\norm{\Pi \nb^r  p}_{L^2(\D\Omega)} +C(1/\delta_r,K,\vol\Omega)\sum_{s\ls r-2}\norm{\nb^s\Delta  p}_{L^2(\Omega)}\no\\
  \ls&\delta_r\norm{\Pi \nb^r  p}_{L^2(\D\Omega)}+C(1/\delta_r,K,K_1,M,\vol\Omega)\sum_{\ell=1}^{r-1} E_\ell(t)\no\\
  &+(r-2)C(1/\delta_r,K,K_1,M,\vol\Omega)E_2^{1/2}(t)E_r^{1/2}(t).
\end{align}

Next, we estimate the boundary terms. Because $p=0$ on the boundary $\D\Omega$, from \eqref{eq.CL00prop5.9.1}, we obtain for $r\gs 1$,
\begin{align}\label{eq.pinbrp}
    \norm{\Pi\nb^r p}_{L^2(\D\Omega)}\ls & C(K,K_1)\Big(\norm{\theta}_{L^\infty(\D\Omega)}+(r-2)\sum_{k\ls r-3} \norm{\bnb^k \theta}_{L^2(\D\Omega)}\Big)\no \\
    &\times\sum_{k\ls r-1}\norm{\nb^k p}_{L^2(\D\Omega)}.
  \end{align}
Due to \eqref{eq.CL004.20}, we have $\Pi\nb^2 p=\theta \nb_N p$. From \eqref{eq.2energy91}, \eqref{eq.2energy9}, \eqref{eq.CL00lemA.7.1}, \eqref{eq.2energy8} and \eqref{eq.est.nbrp}, we obtain
\begin{align}
  \norm{\theta}_{L^2(\D\Omega)}=&\norm{\frac{\Pi\nb^2 p}{\nb_N p}}_{L^2(\D\Omega)}\ls \frac{1}{\eps} \norm{\Pi\nb^2 p}_{L^2(\D\Omega)},
  \end{align}
  and 
  \begin{align}
  \norm{\Pi\nb^2 p}_{L^2(\D\Omega)}\ls& \norm{\theta}_{L^\infty(\D\Omega)}\norm{\nb p}_{L^2(\D\Omega)}\ls C(K,\vol\Omega)\left(\norm{\nb^2 p}_{L^2(\Omega)}+\norm{\nb p}_{L^2(\Omega)}\right)\no\\
  \ls& C(K,\vol\Omega)\delta_2\norm{\Pi\nb^2 p}_{L^2(\D\Omega)} +C(K,\vol\Omega)(\vol\Omega)^{1/2} M\no\\
  &+C(1/\delta_2,K,K_1,M,\vol\Omega)E_1(t),\label{eq.pinb2p}
\end{align}
where the first term on the right hand side of \eqref{eq.pinb2p} can be absorbed by the left hand side if we take $\delta_2$ small such that  $C(K,\vol\Omega)\delta_2\ls 1/2$. Then,  
\begin{align}
  \norm{\Pi\nb^2 p}_{L^2(\D\Omega)}+\norm{\nb^2 p}_{L^2(\Omega)}\ls&C(K,K_1,M,\vol\Omega)(1+E_1(t)),\label{eq.nb2p}\\
  \norm{\theta}_{L^2(\D\Omega)}\ls&C(K,K_1,M,\vol\Omega,1/\eps)(1+E_1(t)).\label{eq.theta}
\end{align}
From Theorem \ref{thm.1energy}, there exists a constant $T>0$ such that $E_1(t)$ can be controlled by the initial energy $E_1(0)$ for $t\in[0,T]$, e.g., $E_1(t)\ls 2E_1(0)$.
Then, from \eqref{eq.pinbrp}, \eqref{eq.theta}, \eqref{eq.2energy8} and \eqref{eq.nb2p}, we get
\begin{align*}
  \norm{\Pi\nb^3 p}_{L^2(\D\Omega)}\ls &C(K,K_1)\left(K+\norm{\theta}_{L^2(\D\Omega)}\right) \sum_{k\ls 2}\norm{\nb^k p}_{L^2(\D\Omega)}\no\\
  \ls& C(K,K_1,M,\vol\Omega,1/\eps,E_1(0))\norm{\nb^3 p}_{L^2(\Omega)} \no\\ &+C(K,K_1,M,\vol\Omega,1/\eps,E_1(0)).
\end{align*}
It follows from \eqref{eq.est.nbrp} that
\begin{align*}
  \norm{\nb^3  p}_{L^2(\Omega)}
  \ls& \delta_3 C(K,K_1,M,\vol\Omega,1/\eps,E_1(0))\norm{\nb^3 p}_{L^2(\Omega)}\\
  &+\delta_3 C(K,K_1,M,\vol\Omega,1/\eps,E_1(0)) \no\\
  &+C(1/\delta_3,K,K_1,M,\vol\Omega) (E_1(t)+E_2(t))\no\\
  &+C(1/\delta_3,K,K_1,M,\vol\Omega)E_2^{1/2}(t)E_3^{1/2}(t),
\end{align*}
which, if we take $\delta_3>0$ so small that $\delta_3 C(K,K_1,M,\vol\Omega,1/\eps,E_1(0))\ls 1/2$,
implies  
\begin{align}\label{eq.nb3p}
  \norm{\nb^3 p}_{L^2(\Omega)}
  \ls&  C(K,K_1,M,\vol\Omega,1/\eps,E_1(0))+C(K,K_1,M,\vol\Omega)\sum_{\ell=1}^2 E_\ell(t)\no\\
  &+C(K,K_1,M,\vol\Omega)E_2^{1/2}(t)E_3^{1/2}(t),
\end{align}
and thus  
\begin{align*}
  \norm{\Pi\nb^3  p}_{L^2(\D\Omega)}
  \ls&  C(K,K_1,M,\vol\Omega,1/\eps,E_1(0))\left(1+\sum_{\ell=1}^2 E_\ell(t)+E_2^{1/2}(t)E_3^{1/2}(t)\right).
\end{align*}
Because
\begin{align*}
  \bnb_b\nb_N  p=&\gamma_b^d\nb_d (N^a\nb_a  p)
  =\detail{(\delta_b^d-N_bN^d)((\nb_dN^a)\nb_a  p+N^a\nb_d\nb_a p)\\
  =&}\theta_b^a\nb_a  p+N^a\nb_b\nb_a p-N_bN^d(\theta_d^a\nb_a p+N^a\nb_d\nb_a p),
\end{align*}
we have from \eqref{eq.CL00lemA.7.1} that
\begin{align*}
  \norm{\bnb\nb_N p}_{L^2(\D\Omega)}
  \ls& C\norm{\theta}_{L^\infty(\D\Omega)}\norm{\nb  p}_{L^2(\D\Omega)}+C\norm{\nb^2 p}_{L^2(\D\Omega)}\\
  \ls&C(K,\vol\Omega)\left(\norm{\nb^3  p}_{L^2(\Omega)}+\norm{\nb^2  p}_{L^2(\Omega)}+\norm{\nb  p}_{L^2(\Omega)}\right)\\
  \ls& C(K,K_1,M,\vol\Omega,1/\eps,E_1(0))+C(K,K_1,M,\vol\Omega)\sum_{\ell=1}^2 E_\ell(t)\\
  &+C(K,K_1,M,\vol\Omega)E_2^{1/2}(t)E_3^{1/2}(t).
\end{align*}
Then, by \eqref{eq.CL004.21}, we have $$(\bnb\theta)\nb_N  p=\Pi\nb^3  p-3\theta\tilde{\otimes}\bnb\nb_N  p$$ and
\begin{align*}
  \norm{\bnb\theta}_{L^2(\D\Omega)}\ls& \frac{1}{\eps}\left(\norm{\Pi\nb^3 p}_{L^2(\D\Omega)}+C\norm{\theta}_{L^\infty(\D\Omega)}\norm{\bnb\nb_N p}_{L^2(\D\Omega)}\right)\no\\
  \ls&C(K,K_1,M,\vol\Omega,1/\eps,E_1(0))\Big(1+\sum_{\ell=1}^2 E_\ell(t)+E_2^{1/2}(t)E_3^{1/2}(t)\Big).
\end{align*}
From \eqref{eq.pinbrp} and \eqref{eq.CL00lemA.7.1},  
\begin{align}
  \norm{\Pi\nb^4 p}_{L^2(\D\Omega)}\ls &C(K,K_1)\left(K+\norm{\theta}_{L^2(\D\Omega)}+ \norm{\bnb \theta}_{L^2(\D\Omega)}\right)\sum_{k\ls 4}\norm{\nb^k p}_{L^2(\Omega)}.\label{eq.nb4p}
\end{align}

Thus, by \eqref{eq.est.nbrp}  we can absorb the highest order term $\norm{\nb^4 p}_{L^2(\Omega)}$ by the left hand side for $\delta_4>0$ small enough which is independent of the highest order energy $E_4(t)$, and  
\begin{align*}
  &\norm{\nb^4 p}_{L^2(\Omega)}+\norm{\Pi\nb^4 p}_{L^2(\D\Omega)}\\
  \ls& C(K,K_1,M,\vol\Omega,1/\eps,E_1(0))\Big(1+\sum_{\ell=1}^3 E_\ell(t)+E_2^{1/2}(t)E_4^{1/2}(t)\Big).
\end{align*}
Hence, from \eqref{eq.nb2p}, \eqref{eq.nb3p} and \eqref{eq.nb4p}, we have for $r\gs 2$,   
\begin{align*}
  \norm{\nb^r p}_{L^2(\Omega)}\ls &C(K,K_1,M,\vol\Omega,1/\eps,E_1(0))\Big(1+\sum_{\ell=1}^{r-1} E_\ell(t)+(r-2)E_2^{1/2}(t)E_r^{1/2}(t)\Big),
\end{align*}
which, from \eqref{eq.up}, yields  
\begin{align*}
  \abs{\eqref{eq.r.e12}}\ls& C(K,K_1,M,\vol\Omega,1/\eps,E_1(0))E_r^{1/2}(t)\Big(1+\sum_{\ell=1}^{r-1} E_\ell(t)+(r-2)E_2^{1/2}(t)E_r^{1/2}(t)\Big).
\end{align*}

\subsubsection*{Step 1.2: Estimate \eqref{eq.r.e11}} \quad \\
  The boundary condition $p=0$ on $\D\Omega$ implies $\gamma_b^a\nb_a  p=0$ on $\D\Omega$. Then we have, from \eqref{gammaauc} and $\vartheta=-1/\nb_N  p$,  
\begin{align}\label{eq.nunb}
  -\vartheta^{-1}N_b=&\nb_N  p N_b=N^a\nb_a  p N_b=\delta_b^a\nb_a p-\gamma_b^a\nb_a p
  =\nb_b  p.
\end{align}
From the H\"older inequality and \eqref{eq.nunb}, we get
\begin{align}
  \abs{\eqref{eq.r.e11}}\ls &C\norm{\vartheta}_{L^\infty(\D\Omega)}^{1/2} E_r^{1/2}(t)\norm{\Pi\left(D_t\left(\nb^{r}  p\right)-\vartheta^{-1}N_b\nb^{r} u^b\right)}_{L^2(\D\Omega)}\no\\
  = &C\norm{\vartheta}_{L^\infty(\D\Omega)}^{1/2} E_r^{1/2}(t)\norm{\Pi\left(D_t\left(\nb^{r}  p\right)+\nb^{r} u\cdot\nb  p\right)}_{L^2(\D\Omega)}.
\end{align}
It follows from \eqref{eq.commutator} that
\begin{align}\label{eq.Piterm1}
  D_t\nb^r  p+\nb^{r} u\cdot\nb  p=&[D_t,\nb^r] p+\nb^r D_t  p+\nb^{r} u\cdot\nb  p\no\\
  =&\sgn(2-r)\sum_{s=1}^{r-2}\left(r\atop s+1\right)(\nb^{s+1}u)\cdot \nb^{r-s}  p+\nb^r D_t  p.
\end{align}

Now, we consider   the last term in \eqref{eq.Piterm1}. From \eqref{eq.CL00prop5.9.1} and \eqref{eq.CL00lemA.7.1}, we have, for $2\ls r\ls 4$,   
\begin{align}\label{eq.est.pinbdtp}
  &\norm{\Pi\nb^rD_t p}_{L^2(\D\Omega)}\no\\
  \ls& C(K,K_1,\vol\Omega)\left(\norm{\theta}_{L^\infty(\D\Omega)}+(r-2)\sum_{k\ls r-3}\norm{\bnb^k\theta}_{L^2(\D\Omega)}\right) \sum_{k\ls r} \norm{\nb^k D_t p}_{L^2(\Omega)}.
\end{align}
It follows from \eqref{eq.CL00prop5.8.2} that
\begin{align}\label{eq.est.nbkdtp}
  \norm{\nb^r D_t p}_{L^2(\Omega)}
  \ls \delta \norm{\Pi\nb^r D_t p}_{L^2(\D\Omega)}+C(1/\delta, K,\vol\Omega)\sum_{s\ls r-2}\norm{\nb^s\Delta D_t  p}_{L^2(\Omega)}.
\end{align}
From \eqref {eq.Dtpcommu}, \eqref{eq.2e.p}, Lemma \ref{lem.CL00lem2.1}, \eqref{eq.1energy1}, \eqref{eq.1energy2} and \eqref{eld41},  it follows that
\begin{align*}
  \Delta D_t  p=&2h^{ab}\nb_a\nb_b p+(\Delta u^e)\nb_e  p-D_t(g^{bd}g^{ac}\nb_a u_d\nb_b u_c)+D_t(g^{cb}g^{ae}g^{df}\nb_a \F_{cd} \nb_b \F_{ef})\\
  =&2h^{ab}\nb_a\nb_b p+(\Delta u^e)\nb_e  p-2D_t(g^{bd})\nb_a u_d\nb_b u^a-2 g^{bd}D_t(\nb_a u_d)\nb_b u^a\\
  &+2 D_t(g^{cb})\nb_a \F_{cd} \nb_b \F^{ad} +g^{cb}g^{ae}D_t(g^{df})\nb_a \F_{cd} \nb_b \F_{ef}+2 g^{cb} D_t(\nb_a \F_{cd}) \nb_b \F^{ad}\\
  =&2g^{ac}\nb_c u^b\nb_a\nb_b p+(\Delta u^e)\nb_e  p+2\nb_e u^b\nb_b u^a\nb_a u^e-2g^{ce}\nb_e u^b\nb_a\F_{cd}\nb_b\F^{ad}\\
  &-2\nb_du_f\nb_a\F^{bd}\nb_b\F^{af}+2g^{ce}\nb_c\F^{ad}\nb_a\F_{eb}\nb_d u^b-2g^{bd}\nb_b u^a \nb_a\nb_c \F_{de}\F^{ce}\\
  &+2g^{ce}\nb_b\F^{ad}\F_{ed}\nb_a\nb_c u^b.
\end{align*}
From \eqref{eq.r.einfbeta}, \eqref{eq.est.nbrp} and Lemma \ref{lem.CL00lemA.4}, it implies that,  for $s\ls 2$,
\begin{align*} 
&  \norm{\nb^s\Delta D_t p}_{L^2(\Omega)}\\
  \ls&C\norm{\nb  u}_{L^\infty(\Omega)}\norm{\nb^{s+2}  p}_{L^2(\Omega)}+s(s-1)C\norm{\nb^3 u}_{L^2(\Omega)}\norm{\nb^2  p}_{L^\infty(\Omega)}\no\\
  &+sC\norm{ \nb^2 u}_{L^4(\Omega)}\norm{\nb^{s+1}  p}_{L^4(\Omega)}+C\norm{\nb^{s+2} u}_{L^2(\Omega)}\norm{\nb   p}_{L^\infty(\Omega)}\no\\
  &+C\left(\norm{\nb u}_{L^\infty(\Omega)}\norm{\nb u}_{L^\infty(\Omega)}+\norm{\nb \F}_{L^\infty(\Omega)}\norm{\nb \F}_{L^\infty(\Omega)}\right) \norm{\nb^{s+1} u}_{L^2(\Omega)}\no\\
  &+s(s-1)C\norm{\nb u}_{L^\infty(\Omega)}\norm{\nb^2 u}_{L^4(\Omega)} \norm{\nb^2 u}_{L^4(\Omega)}\no\\
  &+C\norm{\nb u}_{L^\infty(\Omega)}\norm{\nb \F}_{L^\infty(\Omega)}\norm{\nb^{s+1} \F}_{L^2(\Omega)}\no\\
  &+sC\norm{\nb^2 u}_{L^4(\Omega)}\norm{\nb^2 \F}_{L^4(\Omega)} \left((s-1)\norm{\nb  \F}_{L^\infty(\Omega)}+\norm{\F}_{L^\infty(\Omega)}\right)\no\\
  &+s(s-1)C\norm{\nb u}_{L^\infty(\Omega)}\norm{\nb^2 \F}_{L^4(\Omega)} \norm{\nb^2  \F}_{L^4(\Omega)}\no\\
  &+C\norm{\nb u}_{L^\infty(\Omega)}\norm{\F}_{L^\infty(\Omega)} \norm{\nb^{s+2} \F}_{L^2(\Omega)}\no\\
  &+sC\norm{\nb^3 u}_{L^2(\Omega)}\norm{\F}_{L^\infty(\Omega)} \left((s-1)\norm{\nb^{2} \F}_{L^\infty(\Omega)}+\norm{\nb \F}_{L^\infty(\Omega)}\right)\no\\
  &+s(s-1)C\norm{\nb^3 \F}_{L^2(\Omega)}\norm{\F}_{L^\infty(\Omega)} \norm{\nb^{2} u}_{L^\infty(\Omega)}\no\\
  &+s(s-1)C\norm{\nb \F}_{L^\infty(\Omega)}\norm{\nb^2\F}_{L^4(\Omega)} \norm{\nb^{2} u}_{L^4(\Omega)}\no\\
  &+s(s-1)C\norm{\nb \F}_{L^\infty(\Omega)}\norm{\F}_{L^\infty(\Omega)} \norm{\nb^4 u}_{L^2(\Omega)}\no\\
  &+s(s-1)C\norm{\nb^2 \F}_{L^\infty(\Omega)}\norm{\F}_{L^\infty(\Omega)} \norm{\nb^3 u}_{L^2(\Omega)}.
\end{align*}
In view of Lemma~\ref{lem.CL00lemA.3} and \eqref{eq.r.einfu}, the following holds
\begin{align*}
  \norm{\nb^{s+1} u}_{L^4(\Omega)}\ls& C\norm{\nb^s u}_{L^\infty(\Omega)}^{1/2}\left(\sum_{\ell=0}^2\norm{\nb^{s+\ell} u}_{L^2(\Omega)}K_1^{2-\ell}\right)^{1/2}
  \ls C(K_1)\sum_{\ell=0}^2 E_{s+\ell}^{1/2}(t).
\end{align*}
We can estimate all the terms with $L^4(\Omega)$ norms in the same way in view of \eqref{eq.r.einfbeta}, \eqref{eq.r.einfu}, the similar estimate of $p$ and the assumptions. Hence, we obtain the bound which is linear with respect to the highest-order derivative or the highest-order energy $E_r^{1/2}(t)$, i.e.,
\begin{align}\label{eq.nbdtp}
  \norm{\nb^s\Delta D_t p}_{L^2(\Omega)}\ls& C(K,K_1,M,M_1,L,1/\eps,\vol\Omega,E_0(0))\Big(1+\sum_{\ell=0}^{r-1}E_\ell(t)\Big)\big(1+E_r^{1/2}(t)\big).
\end{align}
Therefore, by \eqref{eq.est.pinbdtp}, \eqref{eq.est.nbkdtp}, \eqref{eq.nbdtp} and for some small $\delta$   independent of $E_r(t)$, we obtain, by the induction argument for $r$,  
\begin{align}\label{eq.pinbrdtp}
  \norm{\Pi \nb^r D_t p}_{L^2(\D\Omega)}\ls &C(K,K_1,M,M_1,L,1/\eps,\vol\Omega,E_0(0))\no\\
   &\qquad\times\Big(1+\sum_{\ell=0}^{r-1}E_\ell(t)\Big)\big(1+E_r^{1/2}(t)\big).
\end{align}

For the estimate of \eqref{eq.Piterm1}, it only remains to estimate
\begin{align*}
    \norm{\Pi\left((\nb^{s+1}u)\cdot \nb^{r-s}  p\right)}_{L^2(\D\Omega)} \text{ for } 1\ls s\ls r-2.
\end{align*}
For the cases $r=3,4$ and $s=r-2$, we have, from \eqref{eq.2energy92} and Lemma \ref{lem.CL00lemA.7}, that
\begin{align*}
  &\norm{\Pi\left((\nb^{r-1}u)\cdot \nb^2  p\right)}_{L^2(\D\Omega)}\no\\
  \ls& \norm{\nb^{r-1} u}_{L^2(\D\Omega)}\norm{\nb^2 p}_{L^\infty(\D\Omega)}
  \ls CL\norm{\nb^2 u}_{L^{2(n-1)/(n-2)}(\D\Omega)}\no\\
  \ls &C(K,\vol\Omega)L\left(\norm{\nb^r u}_{L^2(\Omega)}+\norm{\nb^{r-1} u}_{L^2(\Omega)}\right)\no\\
  \ls &C(K,L,\vol\Omega)\left(E_{r-1}^{1/2}(t)+E_r^{1/2}(t)\right).
\end{align*}
For the cases $n=3$, $r=4$ and $s=1$, from \eqref{eq.CL004.48}, Lemma \ref{lem.CL00lemA.7} and \eqref{eq.est.nbrp}, we have
\begin{align*} 
  &\norm{\Pi\left((\nb^2u)\cdot \nb^3  p\right)}_{L^2(\D\Omega)}\no\\
  =&\norm{\Pi\nb^2u\cdot \Pi\nb^3  p+\Pi(\nb^2 u\cdot N)\tilde{\otimes}\Pi(N\cdot \nb^3 p)}_{L^2(\D\Omega)}\no\\
  \ls&C\norm{\Pi\nb^2 u}_{L^4(\D\Omega)}\norm{\Pi\nb^3 p}_{L^4(\D\Omega)}  +C\norm{\Pi(N^a\nb^2 u_a)}_{L^4(\D\Omega)}\norm{\Pi(\nb_N\nb^2  p)}_{L^4(\D\Omega)}\no\\
  \ls &C\norm{\nb^2 u}_{L^4(\D\Omega)}\norm{\nb^3  p}_{L^4(\D\Omega)}\no\\
  \ls&C(K,\vol\Omega)\left(\norm{\nb^3 u}_{L^2(\Omega)}+\norm{\nb^2 u}_{L^2(\Omega)}\right) \left(\norm{\nb^4  p}_{L^2(\Omega)}+\norm{\nb^3  p}_{L^2(\Omega)}\right)\no\\
  \ls&C(K, K_1,\vol\Omega)(E_3^{1/2}(t)+E_2^{1/2}(t))\left(\sum_{s=0}^3 E_s(t)+\left(\sum_{\ell=0}^{2}E_\ell^{1/2}(t)\right)E_4^{1/2}(t)\right)\no\\
  \ls &C(K, K_1,\vol\Omega)\sum_{s=0}^3 E_s(t)\sum_{\ell=0}^4 E_\ell^{1/2}(t).
\end{align*}
Thus, we get
\begin{align*}
  \abs{\eqref{eq.r.e11}}\ls& C(K,K_1,M,M_1,L,1/\eps,\vol\Omega,E_0(0))\Big(1+\sum_{s=0}^{r-1}E_s(t)\Big)\big(1+E_r(t)\big).
\end{align*}
From Lemma \ref{lem.CL00lemA.3}, it follows that
\begin{align*}
  \abs{\eqref{eq.r.e111}+ \eqref{eq.r.e112}}\ls C(K,K_1,M,\vol\Omega,1/\eps)\Big(1+\sum_{s=0}^{r-1} E_s(t)\Big)E_r(t).
\end{align*}
Therefore, we have shown that
\begin{align*}
  &\abs{\eqref{eq.r.e1}+\eqref{eq.r.e2}+\eqref{eq.r.e7}}\\
  \ls& C(K,K_1,M,M_1,L,1/\eps,\vol\Omega,E_0(0)) \Big(1+\sum_{s=0}^{r-1}E_s(t)\Big)\big(1+E_r(t)\big).
\end{align*}

\subsection*{Step 2: Estimate \eqref{eq.r.e3}-\eqref{eq.r.e6} and \eqref{eq.r.e8}} \quad \\
From Lemma \ref{lem.CL00lem2.1}, \eqref{eq.r.u} and \eqref{eq.nb.beta}, we have
\begin{align*}
  &D_t\left(|\nb^{r-1}\curl u|^2+|\nb^{r-1}\curl \F^\top|^2\right)\\
  \detail{=&D_t\left(g^{ac}g^{bd}g^{AF}\nb_A^{r-1}(\curl u)_{ab} \nb_F^{r-1}(\curl u)_{cd}\right)\\
  &+D_t\left(g^{ac}g^{bd}g^{ef}g^{AF}\nb_A^{r-1}(\curl \F^\top)_{abe} \nb_F^{r-1}(\curl \F^\top)_{cdf}\right)\\
  =&(r+1)D_t(g^{ac})g^{bd}g^{AF}\nb_A^{r-1}(\curl u)_{ab} \nb_F^{r-1}(\curl u)_{cd}\\
   &+4 g^{ac}g^{bd}g^{AF}D_t\left(\nb_A^{r-1}\nb_a u_b\right) \nb_F^{r-1}(\curl u)_{cd}\\
   &+(r+1)D_t(g^{ac})g^{bd}g^{ef}g^{AF}\nb_A^{r-1}(\curl \F^\top)_{abe} \nb_F^{r-1}(\curl \F^\top)_{cdf}\\
   &+g^{ac}g^{bd}D_t(g^{ef})g^{AF}\nb_A^{r-1}(\curl \F^\top)_{abe} \nb_F^{r-1}(\curl \F^\top)_{cdf}\\
   &+4 g^{ac}g^{bd}g^{ef}g^{AF}D_t\left(\nb_A^{r-1}\nb_a \F_{be}\right) \nb_F^{r-1}(\curl \F^\top)_{cdf}\\}
  =&-2(r+1)g^{ae}\nb_e u^cg^{bd}g^{AF}\nb_A^{r-1}(\curl u)_{ab} \nb_F^{r-1}(\curl u)_{cd}\\
  &-2(r+1)g^{ae}\nb_e u^cg^{bd}g^{ef}g^{AF}\nb_A^{r-1}(\curl \F^\top)_{abe} \nb_F^{r-1}(\curl \F^\top)_{cdf}\\
  &+2g^{ac}g^{bd}g^{es}\nb_su^f g^{AF}\nb_A^{r-1}(\curl \F^\top)_{abe} \nb_F^{r-1}(\curl \F^\top)_{cdf}\\
  &+ 4g^{ac}g^{bd}g^{AF}\nb_F^{r-1}(\curl u)_{cd}(\curl u)_{be}\nb_{Aa}^r u^e\\
  &+4\sgn(2-r)g^{ac}g^{AF}\nb_F^{r-1}(\curl u)_{cd}\sum_{s=1}^{r-2}\left(r\atop s+1\right)\left((\nb^{1+s}u)\cdot\nb^{r-s}u^d\right)_{Aa}\\
  &+4\sgn(2-r)g^{ac}g^{AF}\nb_F^{r-1}(\curl \F^\top)_{cdf}\sum_{s=1}^{r-2}\left(r\atop s+1\right)\left((\nb^{1+s}u)\cdot\nb^{r-s}\F^{df}\right)_{Aa}\\
&+4 g^{ac}g^{bd}g^{ef}g^{AF}\nb_F^{r-1}(\curl \F^\top)_{cdf}\nb_{Aa}^r\F_{se}\nb_b u^s\\
&- 4g^{ac}g^{bd}g^{ef}g^{AF}\nb_F^{r-1}(\curl \F^\top)_{cdf}\nb_{Aa}^r u^s\nb_s\F_{be}\\
&+4 g^{ac}g^{bd}g^{ef}g^{AF}\nb_F^{r-1}(\curl \F^\top)_{cdf}\nb_eu^s\nb_{Aa}^r\F_{bs}\\
  &+4\nb_f \left(g^{ac}g^{bd}g^{AF}\F^{fe}\nb_F^{r-1}(\curl u)_{cd}\nb_{Aa}^r \F_{be}\right)\\
  &+4g^{ac}g^{bd}g^{AF}\nb_F^{r-1}(\curl u)_{cd}\sum_{s=1}^r\left(r\atop s\right) \left(\nb^s\F^{ef}\nb^{r-s}\nb_e\F^{bf}\right)_{Aa}\\
  &+4g^{ac}g^{AF}\nb_F^{r-1}(\curl \F^\top)_{cdf}\sum_{s=1}^r\left(r\atop s\right) \left(\nb^s\F^{bf}\nb^{r-s}\nb_b u^d\right)_{Aa}.
\end{align*}
Since $N\cdot\F^\top=0$ on $\D\Omega$,  by the H\"older inequality and the Gauss formula, we have
\begin{align}
  \eqref{eq.r.e3}\ls C(K,K_1,M,\vol\Omega,1/\eps)\left(1+\sum_{s=0}^{r-1} E_s(t)\right)E_r(t).
\end{align}
From \eqref{eq.CL00lem3.9.1} and \eqref{eq.Dtpcommu}, we get
  \begin{align*}
    D_t(\nb_N p)\detail{=&D_t(N^a\nb_a p)=(D_t N^a)\nb_a p+N^aD_t\nb_a p\\
    =&(-2h_d^a N^d+h_{NN} N^a)\nb_a p+N^a\nb_a D_tp\\}
    =&-2h_d^a N^d\nb_a p+h_{NN}\nb_N p+\nb_N D_tp,
  \end{align*}
  which implies
  \begin{align}
    \frac{\vartheta_t}{\vartheta}=-\frac{D_t\nb_N  p}{\nb_N p}=\frac{2h_d^a N^d\nb_a  p}{\nb_N p}-h_{NN}+\frac{\nb_ND_t p}{\nb_N p}.
  \end{align}
Hence, \eqref{eq.r.e8} can be controlled by $C(K,M,L,1/\eps)E_r(t)$. The remaining integrals  \eqref{eq.r.e4}, \eqref{eq.r.e5} and \eqref{eq.r.e6} vanish due to the fact $\tr h=0$.

Therefore, we have
\begin{align}
  \frac{d}{dt}E_r(t)\ls &C(K,K_1,M,M_1,L,1/\eps,\vol\Omega,E_0(0))\Big(1+\sum_{s=0}^{r-1}E_s(t)\Big)\big(1+E_r(t)\big),
\end{align}
which implies the desired result \eqref{eq.renergy} by the Gronwall inequality and the induction argument for $r\in\{2,\cdots, n+1\}$.
\end{proof}

\bigskip

\section{Justification of A Priori Assumptions}\label{sec.justifi}

In the derivation of the higher order energy estimates in Section \ref{sec.rorder}, some \textit{a priori} assumptions are made.
In this section we shall justify these  \textit{a priori} assumptions.

Denote
\begin{equation} \label{eq.E}
\begin{split}
  \K(t)=&\max\left(\norm{\theta(t,\cdot)}_{L^\infty(\D\Omega)}, 1/\iota_0(t)\right), \\
  \E(t)=&\norm{1/(\nb_N  p(t,\cdot))}_{L^\infty(\D\Omega)},  \quad \eps(t)=\frac1{\E(t)}. 
\end{split}
\end{equation}

As in Definition \ref{defn.3.5}, 
  let $0<\eps_1<2$ be a fixed number, take $\iota_1=\iota_1(\eps_1)$ to be the largest number such that
 $\abs{\N(\bar{x}_1)-\N(\bar{x}_2)}\ls \eps_1$   whenever  $\abs{\bar{x}_1-\bar{x}_2}\ls \iota_1$ for
 $ \bar{x}_1,\bar{x}_2\in\D\dm$.
 
\begin{lemma}\label{lem.7.6}
  Let $K_1\gs 1/\iota_1$. 
  Then there are continuous functions $G_j$, $j=1,2,3,4$, such that
  \begin{align}
    \norm{\nb u}_{L^\infty(\Omega)}+\norm{\nb \F}_{L^\infty(\Omega)}+\norm{ \F}_{L^\infty(\Omega)}\ls& G_1(K_1,E_0,\cdots, E_{n+1}),\label{eq.e.1}\\
    \norm{\nb  p}_{L^\infty(\Omega)}+\norm{\nb^2 p}_{L^\infty(\D\Omega)}\ls &G_2(K_1,\E,E_0,\cdots, E_{n+1},\vol\Omega),\label{eq.e.2}\\
    \norm{\theta}_{L^\infty(\D\Omega)}\ls &G_3(K_1,\E,E_0,\cdots, E_{n+1},\vol\Omega),\label{eq.e.3}\\
    \norm{\nb D_t p}_{L^\infty(\D\Omega)}\ls &G_4(K_1,\E,E_0,\cdots, E_{n+1},\vol\Omega).\label{eq.e.4}
  \end{align}
\end{lemma}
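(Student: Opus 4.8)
The plan is to bootstrap the pointwise bounds \eqref{eq.e.1}--\eqref{eq.e.4} from the higher-order energies $E_0,\cdots,E_{n+1}$ by combining Sobolev-type embeddings on $\Omega$ and $\D\Omega$ (the versions of these stated in the appendix, e.g.\ Lemmas \ref{lem.CL00lemA.3}, \ref{lem.CL00lemA.4}, \ref{lem.CL00lemA.7} and the elliptic estimate \eqref{eq.CL00prop5.8.2}) with the algebraic identities already derived in Sections \ref{sec.Lag}--\ref{sec.rorder}. The one structural point that makes the embeddings usable is that the constants in Lemmas \ref{lem.CL00lemA.3}--\ref{lem.CL00lemA.4} depend on the geometry of $\D\Omega$ only through $K_1\gs 1/\iota_1$, so as soon as we have $L^2$-control of $\nb^s u$, $\nb^s\F$ for $s\ls n+1$ in terms of the $E_s$, the $L^\infty$ bounds follow with the claimed dependence. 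Concretely, for \eqref{eq.e.1} I would apply Lemma \ref{lem.CL00lemA.4} (interior Sobolev with $n/2-s$ powers of $K_1$) exactly as in \eqref{eq.CllemmaA.4} and \eqref{eq.r.einfbeta}--\eqref{eq.r.einfu}, taking $s=1$ and using that the quadratic forms $Q$ and $g^{ab}$ are comparable to $\delta^{ij}$ (again with $K_1$-dependence via the metric bounds recalled in Section \ref{sec.Lag}); since $n\ls 3$, one needs $\nb^s u,\nb^s\F$ in $L^2$ only for $s\ls 3\ls n+1$, so all of these are controlled by $E_0^{1/2}+\cdots+E_{n+1}^{1/2}$.

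For \eqref{eq.e.2}--\eqref{eq.e.3} the pressure must be brought in through the elliptic problem it solves: $p=0$ on $\D\Omega$ together with $\Delta p = -\nb_a u^b\nb_b u^a + g^{cb}\nb_a\F_{cd}\nb_b\F^{ad}$ from \eqref{eq.2e.p}. Running the same argument as in Step 1.1 of Theorem \ref{thm.renergy} --- i.e.\ \eqref{eq.est.nbrp} combined with the boundary estimate \eqref{eq.pinbrp} for $\Pi\nb^r p$ and the identity $\Pi\nb^2p=\theta\,\nb_N p$ from \eqref{esff1} --- gives $\|\nb^r p\|_{L^2(\Omega)}$ for $r\ls n+1$ in terms of $E_0,\cdots,E_{n+1}$, $\vol\Omega$, $K_1$ and $\E$ (the last because dividing $\Pi\nb^2p$ by $\nb_N p$ costs a factor $\E$). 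Then \eqref{eq.e.2} is Sobolev embedding applied to $\nb p$ (interior) and to $\nb^2 p$ on the boundary (Lemma \ref{lem.CL00lemA.7}), while \eqref{eq.e.3} is the pointwise version of \eqref{eq.theta}: $\|\theta\|_{L^\infty(\D\Omega)}\ls C(K_1)\sum_{k\ls n-1}\|\bnb^k\theta\|_{L^2(\D\Omega)}$ and each $\|\bnb^k\theta\|_{L^2(\D\Omega)}$ is bounded, via the Gauss-equation identities \eqref{eq.CL004.20}--\eqref{eq.CL004.21}, by $\E$ times $\|\Pi\nb^{k+2}p\|_{L^2(\D\Omega)}$ plus lower-order $\theta$-terms, which closes by induction on $k$ exactly as in the passage leading to \eqref{eq.theta} and the bound on $\|\bnb\theta\|_{L^2(\D\Omega)}$.

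Finally, \eqref{eq.e.4} is handled by the same scheme applied to $D_tp$ instead of $p$: from \eqref{eq.Dtpcommu} and \eqref{eq.2e.p} one has the formula for $\Delta D_tp$ displayed in Step 1.2, whose right-hand side is a polynomial in $\nb^{\ls n+1}u$, $\nb^{\ls n+1}\F$, $\nb^{\ls n+1}p$; hence $\|\nb^s\Delta D_tp\|_{L^2(\Omega)}$ is controlled as in \eqref{eq.nbdtp}, the elliptic estimate \eqref{eq.est.nbkdtp} and boundary estimate \eqref{eq.est.pinbdtp} give $\|\nb^rD_tp\|_{L^2(\Omega)}$ and $\|\Pi\nb^rD_tp\|_{L^2(\D\Omega)}$ for $r\ls n+1$, and Sobolev embedding on $\D\Omega$ (Lemma \ref{lem.CL00lemA.7}) yields the $L^\infty(\D\Omega)$ bound on $\nb D_tp$; note $D_tp$ itself does not vanish on $\D\Omega$, so one keeps the full $\nb^rD_tp$ rather than only its tangential part, but this only affects which Sobolev lemma is cited, not the structure. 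The main obstacle is bookkeeping: one must check at each stage that no estimate secretly requires an $L^2$-norm of order higher than $n+1$ (this is why the theorem stops at $E_{n+1}$ and why the dimension restriction $n\ls3$ matters), and that the apparent circularity --- $\theta$ bounds need $p$ bounds which need $\theta$ bounds --- is genuinely resolved by the one-sided condition $-\nb_Np\gs\eps>0$, i.e.\ by the factor $\E$, so that each induction step on $k$ gains a derivative on $p$ before it spends one on $\theta$; all the remaining work is routine H\"older and interpolation as in Sections \ref{sec.1order}--\ref{sec.rorder}.
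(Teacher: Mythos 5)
Your treatment of \eqref{eq.e.1}, \eqref{eq.e.2} and \eqref{eq.e.4} is essentially the paper's own proof: interior Sobolev embedding (Lemma \ref{lem.CL00lemA.4}) through \eqref{eq.CllemmaA.4} and \eqref{eq.r.einfbeta}--\eqref{eq.r.einfu} for \eqref{eq.e.1}; the elliptic and boundary estimates for $p$ from Step 1.1 (\eqref{eq.deltap2}, \eqref{eq.nb2p}, \eqref{eq.nb3p}) followed by the embeddings of Lemmas \ref{lem.CL00lemA.4} and \ref{lem.CL00lemA.2} for \eqref{eq.e.2}; and the $D_t p$ estimates \eqref{eq.est.nbkdtp}, \eqref{eq.nbdtp}, \eqref{eq.pinbrdtp} plus a boundary Sobolev embedding for \eqref{eq.e.4}.

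The gap is in \eqref{eq.e.3}. You propose to obtain $\norm{\theta}_{L^\infty(\D\Omega)}$ from $\norm{\bnb^k\theta}_{L^2(\D\Omega)}$, $k\ls n-1$, by boundary Sobolev embedding, bounding these norms ``by induction exactly as in the passage leading to \eqref{eq.theta} and the bound on $\norm{\bnb\theta}_{L^2(\D\Omega)}$.'' That passage, however, estimates the term $3\theta\tilde{\otimes}\bnb\nb_N p$ coming from \eqref{eq.CL004.21} by $\norm{\theta}_{L^\infty(\D\Omega)}\norm{\bnb\nb_N p}_{L^2(\D\Omega)}$ and then absorbs $\norm{\theta}_{L^\infty(\D\Omega)}$ into the constant using the a priori assumption $\abs{\theta}\ls K$ of \eqref{eq.2energy9}. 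Reproducing it here either reintroduces $\norm{\theta}_{L^\infty(\D\Omega)}$ --- the very quantity you are trying to bound --- or makes $G_3$ depend on $K$, which Lemma \ref{lem.7.6} must avoid, since its whole purpose is to justify that a priori assumption: the admissible arguments are $K_1,\E,E_0,\dots,E_{n+1},\vol\Omega$ only. In addition, for $n=3$ your embedding needs $\bnb^2\theta\in L^2(\D\Omega)$, which would require a $\Pi\nb^4 p$ analogue of \eqref{eq.CL004.21} and its estimate, neither of which is established in the paper. The paper's proof avoids all of this with a one-line pointwise argument: since $p=0$ on $\D\Omega$, \eqref{eq.CL004.20} gives $\abs{\nb^2 p}\gs\abs{\Pi\nb^2 p}=\abs{\nb_N p}\,\abs{\theta}\gs\E^{-1}\abs{\theta}$ on $\D\Omega$, hence $\norm{\theta}_{L^\infty(\D\Omega)}\ls\E\,\norm{\nb^2 p}_{L^\infty(\D\Omega)}$, and \eqref{eq.e.3} is an immediate consequence of \eqref{eq.e.2}. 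Replacing your induction for \eqref{eq.e.3} by this observation repairs the argument.
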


\begin{proof}
 The estimate \eqref{eq.e.1} follows from \eqref{eq.r.einfu}, \eqref{eq.r.einfbeta} and \eqref{eq.CllemmaA.4}. By Lemmas \ref{lem.CL00lemA.4} and \ref{lem.CL00lemA.2},  we obtain
\begin{align}
  \norm{\nb  p}_{L^\infty(\Omega)}\ls &C(K_1)\sum_{\ell=0}^{2} \norm{\nb^{\ell+1} p}_{L^2(\Omega)},\label{eq.e.5}\\
  \intertext{and}
  \norm{\nb^2 p}_{L^\infty(\D\Omega)}\ls &C(K_1)\sum_{\ell=0}^{n+1} \norm{\nb^{\ell} p}_{L^2(\D\Omega)}.\label{eq.e.6}
\end{align}
Thus, the estimate \eqref{eq.e.2} follows from \eqref{eq.e.5}, \eqref{eq.e.6}, Lemmas \ref{lem.CL00lemA.5}--\ref{lem.CL00lemA.7}, \eqref{eq.deltap2}, \eqref{eq.nb2p} and \eqref{eq.nb3p}. Since $|\nb^2 p|\gs |\Pi\nb^2 p|=|\nb_N p||\theta|\gs \E^{-1}|\theta|$ in view of \eqref{eq.CL004.20}, the estimate \eqref{eq.e.3} follows from \eqref{eq.e.2}. The estimate \eqref{eq.e.4} follows from Lemma \ref{lem.CL00lemA.2}, \eqref{eq.est.nbkdtp}, \eqref{eq.nbdtp} and \eqref{eq.pinbrdtp}.
\end{proof}

\begin{lemma}\label{lem.7.7}
  Let $K_1\gs 1/\iota_1$.
  Then we have
  \begin{align}\label{eq.e.8}
    \abs{\frac{d}{dt}E_r}\ls C_r(K_1,\E,E_0,\cdots, E_{n+1},\vol\Omega)\sum_{s=0}^r E_s,
  \end{align}
  and
  \begin{align}\label{eq.e.9}
    \abs{\frac{d}{dt}\E}\ls C_r(K_1,\E,E_0,\cdots, E_{n+1},\vol\Omega).
  \end{align}
\end{lemma}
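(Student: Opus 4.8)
The plan is to obtain both estimates by substituting the pointwise bounds of Lemma~\ref{lem.7.6} into the differential inequalities already derived in the proofs of Theorems~\ref{thm.1energy} and~\ref{thm.renergy}, together with the evolution identity for $\vartheta$ obtained there. The key observation is that Lemma~\ref{lem.7.6} converts every \textit{a priori} $L^\infty$-quantity used as a hypothesis in those theorems into a continuous function of $(K_1,\E,E_0,\dots,E_{n+1},\vol\Omega)$: one has $M\ls G_1+G_2$ from \eqref{eq.e.1}--\eqref{eq.e.2}, $M_1\ls G_1$ from \eqref{eq.e.1}, $L\ls G_2+G_4$ from \eqref{eq.e.2} and \eqref{eq.e.4} (using $\abs{\nb_N D_t p}\ls\norm{\nb D_t p}_{L^\infty(\D\Omega)}$), $\norm{\theta}_{L^\infty(\D\Omega)}\ls G_3$ from \eqref{eq.e.3}, and $1/\eps=\E$; the remaining ingredient of $K$, namely $1/\iota_0$, is controlled by a continuous function of $\norm{\theta}_{L^\infty(\D\Omega)}$ and $K_1$ through the geometric estimate of \cite{CL00} relating the injectivity radius to the second fundamental form. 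Since $E_0$ is conserved, any appearance of $E_0(0)$ may be read as $E_0(t)$.

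For \eqref{eq.e.8} I would proceed by the same induction on $r$ as in Theorem~\ref{thm.renergy}. For $r=0$ the left-hand side vanishes, $E_0$ being conserved. For $r=1$, the proof of Theorem~\ref{thm.1energy} gives
\[
\tfrac{d}{dt}E_1\ls CME_1+CKM\big((\vol\Omega)^{1/2}+E_0^{1/2}\big)E_1^{1/2},
\]
into which I insert the bounds for $M$ and $K$. For $r\gs2$, the proof of Theorem~\ref{thm.renergy} established
\[
\tfrac{d}{dt}E_r\ls C\big(K,K_1,M,M_1,L,1/\eps,\vol\Omega,E_0(0)\big)\Big(1+\sum_{s=0}^{r-1}E_s\Big)\big(1+E_r\big),
\]
and after the substitution the constant becomes a continuous function of $(K_1,\E,E_0,\dots,E_{n+1},\vol\Omega)$. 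It remains to absorb the inhomogeneous terms and the product structure into $C_r$; this is legitimate because $\sum_{s=0}^{n+1}E_s$ is one of the arguments of $C_r$ and, in fact, is bounded below by a positive multiple of $\vol\Omega$: the relation $\det F=1$ (natural for incompressible elasticity) is propagated by the flow, since $D_t\det F=(\det F)\dv v=0$, whence $\abs{\F}^2=\tr(FF^\top)\gs n$ pointwise and $E_0\gs\tfrac{n}{2}\vol\Omega$. This gives \eqref{eq.e.8}.

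For \eqref{eq.e.9}, write $\E(t)=\norm{\vartheta(t,\cdot)}_{L^\infty(\D\Omega)}$, where $\vartheta=-1/\nb_N p>0$ by the physical sign condition. The proof of Theorem~\ref{thm.renergy} yields the identity
\[
\frac{\vartheta_t}{\vartheta}=\frac{2h_d^aN^d\nb_a p}{\nb_N p}-h_{NN}+\frac{\nb_N D_t p}{\nb_N p},
\]
in which, by Lemma~\ref{lem.CL00lem2.1}, $h_{ab}=\tfrac12(\nb_a u_b+\nb_b u_a)$, so $\abs{h}\ls\norm{\nb u}_{L^\infty(\Omega)}\ls G_1$; combining this with $\abs{\nb p}\ls G_2$ and $\abs{\nb_N D_t p}\ls G_4$ on $\D\Omega$ and with $1/\abs{\nb_N p}=\vartheta\ls\E$ gives $\abs{\vartheta_t}\ls\vartheta\cdot A$, where $A$ is a continuous function of $(K_1,\E,E_0,\dots,E_{n+1},\vol\Omega)$; hence $\norm{\vartheta_t}_{L^\infty(\D\Omega)}\ls\E\cdot A$. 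Because $\D\Omega$ is fixed in the Lagrangian picture and $\partial_t\vartheta=D_t\vartheta=\vartheta_t$ there, the map $t\mapsto\vartheta(t,\cdot)$ is locally Lipschitz into $L^\infty(\D\Omega)$, so $\E(t)$ is locally Lipschitz and $\abs{\tfrac{d}{dt}\E}\ls\norm{\vartheta_t}_{L^\infty(\D\Omega)}\ls\E\cdot A$ for a.e.\ $t$, which is \eqref{eq.e.9}.

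No step is analytically deep beyond what was done in Sections~\ref{sec.1order}--\ref{sec.rorder}; the main work --- and the step I expect to occupy most of the write-up --- is the careful term-by-term verification that after substitution the constants depend on precisely the quantities listed (in particular that the boundary term $L$ needs only $\norm{\nb^2 p}_{L^\infty(\D\Omega)}$ and $\norm{\nb D_t p}_{L^\infty(\D\Omega)}$, and that $1/\iota_0$ is controlled via \cite{CL00}), together with the mild technicality of differentiating the $L^\infty(\D\Omega)$-norm in \eqref{eq.e.9}, handled by the Lipschitz-in-time regularity noted above.
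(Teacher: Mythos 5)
Your route is the same as the paper's: for \eqref{eq.e.8} the paper simply feeds the $L^\infty$ bounds of Lemma \ref{lem.7.6} back into the differential inequalities established in the proofs of Theorems \ref{thm.1energy} and \ref{thm.renergy}, and for \eqref{eq.e.9} it differentiates $\E(t)=\norm{1/(-\nb_N p)}_{L^\infty(\D\Omega)}$ and invokes \eqref{eq.e.4}. Your conversions $M,M_1,L,\norm{\theta}_{L^\infty},1/\eps$ into continuous functions of $(K_1,\E,E_0,\dots,E_{n+1},\vol\Omega)$, the control of $1/\iota_0$ by $\norm{\theta}_{L^\infty(\D\Omega)}$ and $K_1$ via the injectivity-radius estimate of \cite{CL00} (a point the paper leaves implicit), and your treatment of $\vartheta_t$ together with the Lipschitz-in-time differentiation of the sup-norm are all consistent with, indeed slightly more careful than, the paper's two-line argument.

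There is, however, one concretely wrong step: the claim that $\det F=1$ is propagated, hence $\abs{\F}^2=\tr(FF^\top)\gs n$ pointwise and $E_0\gs \tfrac{n}{2}\vol\Omega$. The paper never assumes $\det F_0=1$, and this normalization is actually incompatible with the boundary condition \eqref{eld1.4}, equivalently \eqref{eld41.4}: $\N_kF^{kj}=0$ on $\D\idm_t$ means $F^\top\N=0$, so $F^\top$ is singular and $\det F=0$ on the free surface (the columns of $F$ are divergence-free vector fields tangent to the boundary, as in the MHD analogue), so no pointwise lower bound on $\abs{\F}$ and no lower bound $E_0\gs \tfrac n2\vol\Omega$ are available. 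Fortunately this step is peripheral: it is used only to absorb the additive constants into $C_r\sum_{s=0}^{r}E_s$. Dropping it, what your computation (and the paper's) actually yields is
\begin{align*}
\abs{\tfrac{d}{dt}E_r}\ls C_r(K_1,\E,E_0,\cdots,E_{n+1},\vol\Omega)\Big(1+\sum_{s=0}^{r}E_s\Big),
\end{align*}
which is all that the subsequent bootstrap in Lemma \ref{lem.7.8} uses; the sharper homogeneous form in \eqref{eq.e.8} is cosmetic at this point. So remove the $\det F$ argument (or state the inequality in the inhomogeneous form above) rather than rely on it; with that correction the rest of your write-up matches the paper's proof.
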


\begin{proof}
  The first estimate \eqref{eq.e.8} follows  immediately from Lemma \ref{lem.7.6} and  the proof of Theorems \ref{thm.1energy} and \ref{thm.renergy}. The second estimate follows from
  \begin{align*}
    \abs{\frac{d}{dt}\norm{\frac{1}{-\nb_N p(t,\cdot)}}_{L^\infty(\D\Omega)}}\ls C\norm{\frac{1}{-\nb_N p(t,\cdot)}}_{L^\infty(\D\Omega)}^2 \norm{\nb_ND_t p(t,\cdot)}_{L^\infty(\D\Omega)}
  \end{align*}
  and \eqref{eq.e.4}.
\end{proof}

As a consequence of Lemma \ref{lem.7.7}, we have the following result:

\begin{lemma}\label{lem.7.8}
  There exists a continuous function $\T>0$ depending on $K_1$, $\E(0)$, $E_0(0)$, $\cdots$, $E_{n+1}(0)$ and $\vol\Omega$ such that for
  \begin{align}
    0\ls t\ls \T(K_1,\E(0),E_0(0),\cdots, E_{n+1}(0),\vol\Omega),
  \end{align}
  one has
  \begin{align}\label{eq.7.37}
    E_s(t)\ls 2E_s(0), \quad 0\ls s\ls n+1, \quad \E(t)\ls 2\E(0).
  \end{align}
  Furthermore,  
  \begin{align}\label{eq.7.38}
    \frac{1}{2}g_{ab}(0,y)Y^aY^b\ls g_{ab}(t,y)Y^aY^b\ls 2g_{ab}(0,y)Y^aY^b,
  \end{align}
  and  
  \begin{align}
    \qquad\abs{\N(x(t,\bar{y}))-\N(x(0,\bar{y}))}\ls&\frac{\eps_1}{16}, &&\bar{y}\in\D\Omega,\qquad\label{eq.7.39}\\
    \abs{x(t,y)-x(t,y)}\ls&\frac{\iota_1}{16}, &&y\in\Omega,\label{eq.7.40}\\
    \abs{\frac{\D x(t,\bar{y})}{\D y}-\frac{\D (0,\bar{y})}{\D y}}\ls &\frac{\eps_1}{16}, &&\bar{y}\in\D\Omega.\label{eq.7.41}
  \end{align}
\end{lemma}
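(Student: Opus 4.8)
The plan is to run a continuity (bootstrap) argument built on the differential inequalities of Lemma~\ref{lem.7.7}. First I would fix $\eps_1$ once and for all and then choose $K_1\gs 2/\iota_1(0)$; the point of the latter is that, as long as \eqref{eq.7.39} (equivalently \eqref{eq.7.41}) holds, the exterior normal $\N$ moves by at most $\eps_1/16$, so by the very definition of $\iota_1$ one has $\iota_1(t)\gs\iota_1(0)/2$ and the choice $K_1\gs 1/\iota_1(t)$ stays legitimate throughout. Next I would let $T_*$ be the supremum of those $t\in[0,T]$ for which the doubled bounds $E_s(\tau)\ls 2E_s(0)$ $(0\ls s\ls n+1)$ and $\E(\tau)\ls 2\E(0)$, together with the geometric estimates \eqref{eq.7.38}--\eqref{eq.7.41}, all hold on $[0,\tau]$; here we may assume $\sum_{s\ls n+1}E_s(0)>0$, since otherwise the solution is trivial. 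Every defining inequality is strict at $t=0$, so continuity gives $T_*>0$, and the aim is to show $T_*\gs\T$ for a suitable continuous function $\T=\T(K_1,\E(0),E_0(0),\cdots,E_{n+1}(0),\vol\Omega)$.

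On $[0,T_*)$ all the \textit{a priori} hypotheses of Theorems~\ref{thm.1energy} and \ref{thm.renergy} are met. Indeed, Lemma~\ref{lem.7.6} bounds $\norm{\nb u}_{L^\infty(\Omega)}$, $\norm{\nb\F}_{L^\infty(\Omega)}$, $\norm{\F}_{L^\infty(\Omega)}$, $\norm{\nb p}_{L^\infty(\Omega)}$, $\norm{\nb^2 p}_{L^\infty(\D\Omega)}$, $\norm{\theta}_{L^\infty(\D\Omega)}$ and $\norm{\nb D_t p}_{L^\infty(\D\Omega)}$ by continuous functions of $K_1,\E(0),E_0(0),\cdots,E_{n+1}(0),\vol\Omega$ (using $E_s(t)\ls 2E_s(0)$ and $\E(t)\ls 2\E(0)$); combined with the metric comparability \eqref{eq.7.38} this also controls $1/\iota_0$, while $-\nb_N p\gs\frac{1}{2\E(0)}>0$ follows from $\E(t)\ls 2\E(0)$. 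This furnishes admissible constants $M,M_1,K,\eps,L$ for those theorems, so Lemma~\ref{lem.7.7} applies and yields
\[
  \Big|\frac{d}{dt}\sum_{s=0}^{n+1}E_s\Big|\ls C_\star\sum_{s=0}^{n+1}E_s,\qquad \Big|\frac{d}{dt}\E\Big|\ls C_\star
\]
on $[0,T_*)$, with $C_\star=C_\star(K_1,\E(0),E_0(0),\cdots,E_{n+1}(0),\vol\Omega)$ \emph{independent of $T_*$}. Gronwall's inequality then gives $\sum_s E_s(t)\ls e^{C_\star t}\sum_s E_s(0)$ and $\E(t)\ls\E(0)+C_\star t$, so both stay strictly below their doubled values for $t<(\log 2)/C_\star$ and $t<\E(0)/C_\star$.

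It remains to re-derive the geometric bounds \eqref{eq.7.38}--\eqref{eq.7.41} directly. From $D_tg_{ab}=\nb_au_b+\nb_bu_a$ (Lemma~\ref{lem.CL00lem2.1}) one has $|D_tg_{ab}|\ls 2\norm{\nb u}_{L^\infty(\Omega)}$, bounded by $2G_1$; integrating at fixed $y$ gives $|g_{ab}(t,y)-g_{ab}(0,y)|\ls 2G_1t$, and since $g_{ab}(0,\cdot)$ is uniformly positive definite on $\overline\Omega$ this forces \eqref{eq.7.38} for small $t$. From $\dot x=v$ and the Sobolev bound $\norm{v}_{L^\infty(\Omega)}\ls C(K_1)\sum_{s\ls n+1}E_s^{1/2}$ (Lemma~\ref{lem.CL00lemA.4}) one gets $|x(t,y)-x(0,y)|\ls t\norm{v}_{L^\infty(\Omega)}$, hence \eqref{eq.7.40} for small $t$. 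From $D_t(\D x^i/\D y^a)=(\D x^k/\D y^a)(\D v^i/\D x^k)$ (Lemma~\ref{lem.CL00lem2.1}) one has $|D_t(\D x/\D y)|\ls\norm{\nb u}_{L^\infty(\Omega)}|\D x/\D y|$, so $\D x/\D y$ is Lipschitz in $t$ with a controlled constant, giving \eqref{eq.7.41}; and since $\N$ is a smooth function of $\D x/\D y$ through the metric (alternatively, from the evolution of $\N$ in Lemma~\ref{lem.CL00lem3.9}), \eqref{eq.7.39} follows the same way. Taking $\T$ to be the minimum of the finitely many smallness thresholds produced above --- each a continuous function of $K_1,\E(0),E_0(0),\cdots,E_{n+1}(0),\vol\Omega$ --- one concludes $T_*\gs\T$, which is exactly the assertion.

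The step I expect to be the main obstacle is not a single estimate but the self-referential structure: the \textit{a priori} assumptions that make Theorems~\ref{thm.1energy} and \ref{thm.renergy} usable are themselves consequences (via Lemma~\ref{lem.7.6}) of the very energy bounds being proved. The resolution is the bookkeeping above --- checking that the constant $C_\star$ in the differential inequality on $[0,T_*)$ can be taken \emph{independent of $T_*$} and of the top-order energy --- so that the doubled bounds cannot be saturated before the explicit time $\T$, forcing $T_*\gs\T$.
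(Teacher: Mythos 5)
Your proposal is correct and follows essentially the same route as the paper: a continuity/bootstrap argument that feeds the doubled bounds back into Lemma \ref{lem.7.6} to justify the hypotheses of Theorems \ref{thm.1energy} and \ref{thm.renergy}, applies Lemma \ref{lem.7.7} with Gronwall to keep $E_s$ and $\E$ below their doubled values, and then integrates the evolution equations for $g_{ab}$, $x$, $\D x/\D y$ and $\N$ to obtain \eqref{eq.7.38}--\eqref{eq.7.41}. The only cosmetic difference is that you bound $\norm{v}_{L^\infty}$ directly by Sobolev embedding from the energies, whereas the paper gets it from a bound on $D_t v$ and Gronwall; both are fine.
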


\begin{proof}
First when $\T(K_1,\E(0),E_0(0),\cdots, E_{n+1}(0)$, $\vol\Omega)>0$ is sufficiently small,  Lemma \ref{lem.7.7} yields the estimate \eqref{eq.7.37}. Then, from \eqref{eq.7.37} and Lemma \ref{lem.7.6},  
  \begin{align}
    \norm{\nb u}_{L^\infty(\Omega)}+\norm{\nb \F}_{L^\infty(\Omega)} +&\norm{ \F}_{L^\infty(\Omega)}+\norm{\nb  p}_{L^\infty(\Omega)}\no\\
    &\ls C(K_1,\E(0),E_0(0),\cdots, E_{n+1}(0)),\label{eq.7.42}\\
    \norm{\nb^2 p}_{L^\infty(\D\Omega)}+\norm{\theta}_{L^\infty(\D\Omega)} &\ls C(K_1,\E(0),E_0(0),\cdots, E_{n+1}(0),\vol\Omega),\label{eq.7.43}\\
    \intertext{and}
\norm{\nb D_t p}_{L^\infty(\D\Omega)}&\ls C(K_1,\E(0),E_0(0),\cdots, E_{n+1}(0),\vol\Omega).\label{eq.7.44}
  \end{align}
From \eqref{eq.1energy1} and \eqref{eq.1energy2}, we have
  \begin{align*}
    \abs{D_t\nb u}\ls \abs{\nb^2  p}+\abs{\nb u}^2+\abs{\nb \F}^2+\abs{\F}\abs{\nb^2\F},\quad 
    \abs{D_t \nb\F}\ls \abs{\nb\F}\abs{\nb u}+\abs{\F}\abs{\nb^2 u}. 
  \end{align*}
  With the help of \eqref{eq.A.32}, \eqref{eq.CL00lemA.7.1}, Lemma \ref{lem.7.6} and \eqref{eq.7.37}, we obtain
  \begin{align*}
    \norm{\nb u}_{L^\infty(\D\Omega)}+\norm{\nb \F}_{L^\infty(\D\Omega)}\ls C(K_1,\E(0),E_0(0),\cdots, E_{n+1}(0),\vol\Omega).
  \end{align*}
 It follows, from \eqref{eq.7.42}, \eqref{eq.7.43}, Lemmas \ref{lem.CL00lemA.2} and \ref{lem.CL00lemA.7}, \eqref{eq.r.einfu} and \eqref{eq.r.einfbeta}, that
  \begin{align*}
    &\norm{D_t\nb u}_{L^\infty(\D\Omega)}+\norm{D_t\nb\F}_{L^\infty(\D\Omega)}\\
    \ls& \norm{\nb^2 p}_{L^\infty(\D\Omega)}+\left(\norm{\nb u}_{L^\infty(\D\Omega)}+\norm{\nb \F}_{L^\infty(\D\Omega)}\right)^2\no\\
    &+\norm{\F}_{L^\infty(\Omega)}\left(\norm{\nb^2 u}_{L^\infty(\D\Omega)}+\norm{\nb^2 \F}_{L^\infty(\D\Omega)}\right)\no\\
    \ls& C(K_1,\E(0),E_0(0),\cdots, E_{n+1}(0),\vol\Omega)\\
    &\times\left(1+\norm{\nb u}_{L^\infty(\D\Omega)}+\norm{\nb \F}_{L^\infty(\D\Omega)}\right),
  \end{align*}
  which implies, by Gronwall's inequality, for $t\gs 0$
  \begin{align*}
    &\norm{\nb u(t,\cdot)}_{L^\infty(\D\Omega)}+\norm{\nb \F(t,\cdot)}_{L^\infty(\D\Omega)}\no\\
\ls &e^{C(K_1,\E(0),E_0(0),\cdots, E_{n+1}(0),\vol\Omega)t}\left(\norm{\nb u(0,\cdot)}_{L^\infty(\D\Omega)}+\norm{\nb \F(0,\cdot)}_{L^\infty(\D\Omega)}\right)\no\\
    &+e^{C(K_1,\E(0),E_0(0),\cdots, E_{n+1}(0),\vol\Omega)t}-1.
  \end{align*}
   It follows, for $0\ls t\ls T$, that 
  \begin{align}\label{eq.7.45}
    &\norm{\nb u(t,\cdot)}_{L^\infty(\D\Omega)}+\norm{\nb \F(t,\cdot)}_{L^\infty(\D\Omega)}\no\\
\ls& 2\left(\norm{\nb u(0,\cdot)}_{L^\infty(\D\Omega)}+\norm{\nb \F(0,\cdot)}_{L^\infty(\D\Omega)}\right),
  \end{align}
 if we take  $T$ small enough, which also guarantees the \textit{a priori} assumption of \eqref{eq.beta0}.

From \eqref{eq.Dtpcommu}, \eqref{eq.A.4.2}, \eqref{eq.est.nbkdtp}, \eqref{eq.nbdtp} and \eqref{eq.pinbrdtp}, we obtain
 \begin{align*}
   \norm{D_t\nb  p}_{L^\infty(\Omega)}=&\norm{\nb D_t p}_{L^\infty(\Omega)}\ls C(K_1)\sum_{\ell=0}^2\norm{\nb^{\ell+1} D_t p}_{L^2(\Omega)}\\
   \ls & C(K_1,\E(0),E_0(0),\cdots, E_{n+1}(0),\vol\Omega),
 \end{align*}
 which yields,  for  sufficiently small $t>0$, 
 \begin{align}
   \norm{\nb p(t,\cdot)}_{L^\infty(\Omega)}\ls 2\norm{\nb p(0,\cdot)}_{L^\infty(\Omega)}.
 \end{align}
 In view of \eqref{eld41} and \eqref{eq.7.42}, we get
 \begin{align*}
   \norm{D_t v}_{L^\infty(\dm)}\ls& \norm{\D  p}_{L^\infty(\dm)}+\norm{F}_{L^\infty(\dm)}\norm{\D F}_{L^\infty(\dm)}\\
   \ls&\norm{\nb  p}_{L^\infty(\Omega)}+\norm{\F}_{L^\infty(\Omega)} \norm{\nb\F}_{L^\infty(\Omega)}\\
   \ls&C(K_1,\E(0),E_0(0),\cdots, E_{n+1}(0)),
 \end{align*}
 which implies
 \begin{align}\label{eq.7.47}
   \norm{ v(t,\cdot)}_{L^\infty(\dm)}\ls 2\norm{ v(0,\cdot)}_{L^\infty(\Omega)}.
 \end{align}

The relation \eqref{eq.7.38} follows from the same argument because $D_t g_{ab}=\nb_a u_b+\nb_b u_a$ and by \eqref{eq.7.42}
 \begin{align*}
   &\abs{g_{ab}(T,y)Y^aY^b-g_{ab}(0,y)Y^aY^b}   \\
   \ls &\int_0^T\abs{D_t g_{ab}(s,y)} ds Y^aY^b\ls2\int_0^T\norm{\nb_a u_b(s)}_{L^\infty(\Omega)} ds Y^aY^b \ls \frac{1}{2}g_{ab}(0,y)Y^aY^b,
 \end{align*}
 as long as $T$ is sufficiently small. 
 
Recalling the fact $$D_t n_a=h_{NN} n_a,$$
and
 \begin{align*}
   D_t x(t,y)=v(t,x(t,y)),\quad
   D_t\frac{\D x}{\D y} =&\frac{\D  v(t,x(t,y))}{\D y} =\frac{\D  v(t,x)}{\D x}\frac{\D x}{\D y},
 \end{align*}
 one obtains \eqref{eq.7.39}-\eqref{eq.7.41}  from \eqref{eq.7.47} and \eqref{eq.7.45}.
 \end{proof}


As a consequence of \eqref{eq.7.39},  \eqref{eq.7.40} and the triangle inequality, we have the following result:\begin{lemma}\label{lem.7.9}
  Let $\T$ be as in Lemma \ref{lem.7.8}. There exists some $\iota_1>0$ such that, 
  if  $$\abs{\N(x(0,y_1))-\N(x(0,y_2))}\ls \frac{\eps_1}{2}$$ 
  when $\abs{x(0,y_1)-x(0,y_2)}\ls 2\iota_1$,  then 
   $$\abs{\N(x(t,y_1))-\N(x(t,y_2))}\ls \eps_1$$
   when $\abs{x(t,y_1)-x(t,y_2)}\ls 2\iota_1$.

\end{lemma}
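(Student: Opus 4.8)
The plan is to deduce the statement directly from the triangle inequality together with the transport estimates \eqref{eq.7.39} and \eqref{eq.7.40} of Lemma~\ref{lem.7.8}. First I would fix $t\in[0,\T]$ and take boundary parameters $y_1,y_2\in\D\Omega$ with $\abs{x(t,y_1)-x(t,y_2)}\ls 2\iota_1$; since $y_1,y_2$ lie on $\D\Omega$ the points $x(t,y_j)$ lie on the free surface $\D\idm_t$, so $\N(x(t,y_j))$ is meaningful and \eqref{eq.7.39} applies to them.

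The first step is to bring the two points back to the initial time: applying \eqref{eq.7.40} to $y_1$ and to $y_2$ gives $\abs{x(0,y_j)-x(t,y_j)}\ls \iota_1/16$, so by the triangle inequality $\abs{x(0,y_1)-x(0,y_2)}\ls 2\iota_1+\iota_1/8$. Invoking the hypothesis of the lemma (which, up to the harmless constant adjustment discussed below, is available on this radius) then yields $\abs{\N(x(0,y_1))-\N(x(0,y_2))}\ls \eps_1/2$. The second step is to transport the normals forward again: \eqref{eq.7.39} gives $\abs{\N(x(t,y_j))-\N(x(0,y_j))}\ls \eps_1/16$ for $j=1,2$. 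Adding the three contributions,
$$\abs{\N(x(t,y_1))-\N(x(t,y_2))}\ls \frac{\eps_1}{16}+\frac{\eps_1}{2}+\frac{\eps_1}{16}=\frac{5\eps_1}{8}<\eps_1,$$
which is the desired bound.

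I do not expect a genuine obstacle. The only point needing a little care is the constant bookkeeping in the first step, where the time-$t$ separation $2\iota_1$ turns into the slightly larger time-$0$ separation $2\iota_1+\iota_1/8$; this is harmlessly absorbed either by taking $\iota_1$ a fixed factor smaller at the outset or simply by the slack in the strict inequality $\tfrac58\eps_1<\eps_1$ above. What the lemma really records is that the modulus of continuity of the free-surface normal --- and with it the lower bound $\iota_1$ underlying the \textit{a priori} assumption $K_1\gs 1/\iota_1$ --- is propagated, with a factor of two to spare, from $t=0$ to every $t\le\T$, which is precisely what is needed to close the \textit{a priori} assumptions of Section~\ref{sec.justifi}.
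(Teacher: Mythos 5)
Your argument is exactly the paper's: the lemma is stated there as an immediate consequence of \eqref{eq.7.39}, \eqref{eq.7.40} and the triangle inequality, which is precisely the three-term estimate you carry out. The small radius adjustment you flag (the time-$0$ separation being $2\iota_1+\iota_1/8$) is handled correctly by your remark that $\iota_1$ may be taken a fixed factor smaller, consistent with the ``there exists some $\iota_1>0$'' formulation, so the proof stands as essentially identical to the paper's.
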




Lemmas \ref{lem.7.8} and \ref{lem.7.9} yield immediately the main Theorem \ref{MAIN}.

\bigskip

\appendix

\section{Preliminaries and Some Estimates}

For the convenience of the readers and completeness of preliminary results, we record some definitions and estimates directly from Christodoulou-Lindblad \cite{CL00} in this appendix. 

Let $N^a$ denote the unit normal to $\D\Omega$, $g_{ab}N^aN^b=1$, $g_{ab}N^a T^b=0$ if $T\in T(\D\Omega)$, and let $N_a=g_{ab}N^b$ denote the unit conormal, $g^{ab} N_aN_b=1$. The induced metric $\gamma$ on the tangent space to the boundary $T(\D\Omega)$ extended to be $0$ on the orthogonal complement in $T(\Omega)$ is then given by
\begin{align}
  \gamma_{ab}=g_{ab}-N_aN_b,\quad \gamma^{ab}=g^{ab}-N^aN^b.
\end{align}
The orthogonal projection of an $(r,s)$ tensor $S$ to the boundary is given by
\begin{align}
  (\Pi S)_{b_1\cdots b_s}^{a_1\cdots a_r}=\gamma_{c_1}^{a_1}\cdots \gamma_{c_r}^{a_r}\gamma_{b_1}^{d_1}\cdots \gamma_{b_s}^{d_s} S_{d_1\cdots d_s}^{c_1\cdots c_r},
\end{align}
where
\begin{align}\label{gammaauc}
  \gamma_a^c=\delta_a^c-N_aN^c.
\end{align}
Covariant differentiation on the boundary $\bnb$ is given by
\begin{align}
  \bnb S=\Pi\nb S.
\end{align}
The second fundamental form of the boundary is given by
\begin{align}\label{2ndfundform}
  \theta_{ab}=(\Pi\nb N)_{ab}=\gamma_a^c \nb_c N_b.
\end{align}

Let us now recall some properties of the projection. Since $g^{ab}=\gamma^{ab}+N^aN^b$, we have
\begin{align}\label{eq.CL004.48}
  \Pi(S\cdot R)=\Pi(S)\cdot \Pi(R)+\Pi(S\cdot N)\tilde{\otimes}\Pi(N\cdot R),
\end{align}
where $S\tilde{\otimes} R$ denotes some partial symmetrization of the tensor product $S\otimes R$, i.e., a sum over some subset of the permutations of the indices divided by the number of permutations in that subset. Similarly, we let $S\tilde{\cdot} R$ denote a partial symmetrization of the dot product $S\cdot R$. Now we recall some identities:
\begin{align}
  \Pi\nb^2 q=&\bnb^2 q+\theta \nb_N q,\label{eq.CL004.20}\\
  \Pi\nb^3 q=&\bnb^3 q-2\theta\tilde{\otimes}(\theta\tilde{\cdot}\bnb q)+(\bnb\theta)\nb_N q+3\theta\tilde{\otimes}\bnb\nb_N q.\label{eq.CL004.21} 
\end{align}

\begin{definition}\label{defn.3.3}
  Let $\N(\bar{x})$ be the outward unit normal to $\D\dm$ at $\bar{x}\in \D\dm$. Let $\dist(x_1,x_2)=|x_1-x_2|$ denote the Euclidean distance in $\R^n$, and for $\bar{x}_1, \bar{x}_2\in \D\dm$, let $\dist_{\D\dm} (\bar{x}_1, \bar{x}_2)$ denote the geodesic distance on the boundary.
\end{definition}

\begin{definition}\label{defn.3.4}
  Let $\dist(x,\D\dm)$ be the Euclidean distance from $x$ to the boundary. Let $\iota_0$ be the injectivity radius of the normal exponential map of $\D\dm$, i.e., the largest number such that the map
\begin{align*}
    \D\dm\times (-\iota_0,\iota_0)&\to\{x\in\R^n: \dist(x,\D\dm)<\iota\}\\
    \text{given by } (\bar{x},\iota)&\to x=\bar{x}+\iota \N(\bar{x})
\end{align*}
is an injection.
\end{definition}

\begin{definition}\label{defn.3.5}
  Let $0<\eps_1<2$ be a fixed number, and let $\iota_1=\iota_1(\eps_1)$ the largest number such that
  \begin{align*}
    \abs{\N(\bar{x}_1)-\N(\bar{x}_2)}\ls \eps_1 \quad \text{whenever } \abs{\bar{x}_1-\bar{x}_2}\ls \iota_1, \; \bar{x}_1,\bar{x}_2\in\D\dm.
  \end{align*}
\end{definition}

\begin{lemma}[\mbox{\cite[Lemma 3.9]{CL00}}] \label{lem.CL00lem3.9}
  Let $N$ be the unit normal to $\D\Omega$, and let $h_{ab}=\frac{1}{2}D_tg_{ab}$. On $[0,T]\times \D\Omega$, we have
  \begin{align}\label{eq.CL00lem3.9.1}
    &D_tN_a=h_{NN}N_a,\quad D_tN^c=-2h_d^cN^d+h_{NN}N^c,\;\text{and }
    D_t\gamma^{ab}=-2\gamma^{ac}h_{cd}\gamma^{db},
  \end{align}
  where $h_{NN}=h_{ab}N^aN^b$. The volume element on $\D\Omega$ satisfies
  \begin{align}\label{eq.CL00lem3.9.3}
    D_td\mu_\gamma=(\tr h-h_{NN})d\mu_\gamma=(\tr \theta u\cdot N+\gamma^{ab}\bnb_a\bar{u}_b)d\mu_\gamma,
  \end{align}
  where $\bar{u}_b$ denotes the tangential component of $u_b$ to the boundary $\D\Omega$.
\end{lemma}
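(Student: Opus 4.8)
The plan is to use the fact that in the Lagrangian coordinates the domain $\Omega$ and its boundary $\D\Omega$ are \emph{fixed}, so that the annihilator of $T(\D\Omega)$ in the cotangent space is a time-independent line; only the $g$-normalization of the conormal varies with $t$. Concretely, I would fix a nowhere-vanishing covector field $\nu_a$ on $\D\Omega$, independent of $t$, with $\nu_a T^a=0$ for every $T\in T(\D\Omega)$, and write $N_a=\lambda(t,y)\,\nu_a$ where $\lambda=(g^{ab}\nu_a\nu_b)^{-1/2}$ is forced by $g^{ab}N_aN_b=1$. Differentiating $\lambda^{-2}=g^{ab}\nu_a\nu_b$ with $D_t$ and invoking $D_tg^{ab}=-2h^{ab}$ from Lemma \ref{lem.CL00lem2.1}, I get $D_t\lambda/\lambda=h^{ab}\nu_a\nu_b/(g^{ab}\nu_a\nu_b)=h^{ab}N_aN_b=h_{NN}$; since $\nu_a$ is constant in $t$ this yields $D_tN_a=(D_t\lambda/\lambda)N_a=h_{NN}N_a$.

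The remaining two identities in \eqref{eq.CL00lem3.9.1} are then pure algebra. From $D_tN^c=D_t(g^{cd}N_d)=(D_tg^{cd})N_d+g^{cd}D_tN_d=-2h^{cd}N_d+h_{NN}N^c$ and $-2h^{cd}N_d=-2h^c{}_dN^d$ I obtain $D_tN^c=-2h^c{}_dN^d+h_{NN}N^c$. Differentiating $\gamma^{ab}=g^{ab}-N^aN^b$, substituting the two formulas just found, and comparing with the expansion $\gamma^{ac}h_{cd}\gamma^{db}=h^{ab}-h^a{}_dN^dN^b-N^ah^b{}_cN^c+h_{NN}N^aN^b$ gives $D_t\gamma^{ab}=-2\gamma^{ac}h_{cd}\gamma^{db}$.

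For the volume element I would pass to local coordinates $z^A$ on $\D\Omega$, let $\gamma_{AB}=g_{ab}\,(\partial y^a/\partial z^A)(\partial y^b/\partial z^B)$ be the induced metric and $d\mu_\gamma=\sqrt{\det\gamma_{AB}}\,dz$, and use $D_td\mu_\gamma=\tfrac12\gamma^{AB}(D_t\gamma_{AB})\,d\mu_\gamma$ together with $D_t\gamma_{AB}=2h_{ab}\,(\partial y^a/\partial z^A)(\partial y^b/\partial z^B)$ (from $D_tg_{ab}=2h_{ab}$, Lemma \ref{lem.CL00lem2.1}). Since $\gamma^{AB}\,(\partial y^a/\partial z^A)(\partial y^b/\partial z^B)$ is exactly the extended contravariant projection $\gamma^{ab}$, this gives $D_td\mu_\gamma=\gamma^{ab}h_{ab}\,d\mu_\gamma=(g^{ab}-N^aN^b)h_{ab}\,d\mu_\gamma=(\tr h-h_{NN})\,d\mu_\gamma$. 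To reach the final expression I would write $h_{ab}=\tfrac12(\nb_au_b+\nb_bu_a)$ (Lemma \ref{lem.CL00lem2.1}), so $\gamma^{ab}h_{ab}=\gamma^{ab}\nb_au_b$, and decompose $u_b=\bar u_b+(u\cdot N)N_b$ with $\bar u_b=\gamma_b^cu_c$; the term $\gamma^{ab}\nb_a\big((u\cdot N)N_b\big)$ loses its $\nb(u\cdot N)$ piece because $\gamma^{ab}N_b=0$ and leaves $(u\cdot N)\,\gamma^{ab}\nb_aN_b=(u\cdot N)\tr\theta$, while $\gamma^{ab}\nb_a\bar u_b=\gamma^{ab}\bnb_a\bar u_b$ since the tangential projections defining $\bnb$ are absorbed by $\gamma^{ab}$. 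This is precisely \eqref{eq.CL00lem3.9.3}.

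The computation has no real obstacle; the only care needed is the index bookkeeping in the third identity of \eqref{eq.CL00lem3.9.1} and the verification that each tangential projection appearing in $\bnb$, $\theta$, and the decomposition of $u$ can be harmlessly absorbed into $\gamma^{ab}$ (using $\gamma^{ab}N_b=0$ and $\gamma^{ab}\gamma_a^c=\gamma^{cb}$). All of this is exactly Lemma 3.9 of \cite{CL00}, whose argument I would reproduce.
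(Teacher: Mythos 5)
Your argument is correct: choosing a time-independent conormal direction $\nu_a$ (legitimate because $\D\Omega$ is fixed in the Lagrangian coordinates) and normalizing gives $D_tN_a=h_{NN}N_a$, the identities for $D_tN^c$ and $D_t\gamma^{ab}$ then follow by the algebra you indicate, and the determinant and tangential-divergence computation for $d\mu_\gamma$ (using $\gamma^{ab}N_b=0$ and $\gamma^{ab}\gamma_a^c=\gamma^{cb}$) yields both forms of \eqref{eq.CL00lem3.9.3}. The paper itself offers no proof of this statement — it simply records it as Lemma 3.9 of \cite{CL00} — and your derivation is essentially the standard argument of that source, so there is nothing to correct or add.
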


\begin{lemma}[\mbox{cf. \cite[Lemma 5.5]{CL00}}] \label{lem.CL00lem5.5}
  Let $w_a=w_{Aa}=\nb_A^r f_a$, $\nb_A^r=\nb_{a_1}\cdots \nb_{a_r}$, $f$ be a $(0,1)$ tensor, and $[\nb_a,\nb_b]=0$. Let $\dv w=\nb_a w^a=\nb^r\dv f$, and let $(\curl w)_{ab}=\nb_aw_b-\nb_b w_a=\nb^r(\curl f)_{ab}$. Then,
  \begin{align}
    |\nb w|^2\ls C(g^{ab}\gamma^{cd}\gamma^{AB}\nb_c w_{Aa}\nb_d w_{Bb}+|\dv w|^2+|\curl w|^2).
  \end{align}
\end{lemma}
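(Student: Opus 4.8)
The plan is to prove this as a purely pointwise inequality. Both sides are invariantly defined functions on $\overline{\Omega}$, so it suffices to fix a point, choose a $g$-orthonormal frame there in which the outward normal is the last basis vector (so $\gamma^{ab}$ is the orthogonal projection off the last coordinate), and verify the estimate as a statement of linear algebra; we call indices $1,\dots,n-1$ \emph{tangential} and the index $n$ \emph{normal}. Since the ambient curvature vanishes and $[\nb_a,\nb_b]=0$, the $(0,r+2)$ tensor
\[
  S_{c\,a_1\cdots a_r\,a}:=\nb_c w_{a_1\cdots a_r\,a}=\nb_c\nb_{a_1}\cdots\nb_{a_r}f_a
\]
is totally symmetric in its first $r+1$ (``derivative'') slots, the last (``$f$-'') slot being unrestricted. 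In the chosen frame $|\nb w|^2$ is the sum of $S^2$ over all index configurations, the first term on the right-hand side of the lemma is the same sum restricted to configurations in which all $r+1$ derivative slots carry tangential values (the $f$-slot free), and, by the identifications in the hypotheses, $\dv w$ is the trace of $S$ over one derivative slot against the $f$-slot while $\curl w$ is its antisymmetrization over one derivative slot against the $f$-slot. Writing $A$ for a block of the remaining $r$ (symmetric) derivative indices, we thus have the relations
\[
  \sum_{\alpha=1}^{n} S_{\alpha A\alpha}=(\dv w)_{A},\qquad S_{aAb}-S_{bAa}=(\curl w)_{Aab}.
\]

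Next I would let $\nu(S_{cAa})\in\{0,\dots,r+1\}$ denote the number of derivative indices equal to the normal index $n$, and prove by induction on $\nu$ that every component satisfies $|S_{cAa}|\ls C\,(Q^{1/2}+|\dv w|+|\curl w|)$, where $Q$ is the first term on the right-hand side of the lemma and $C=C(n,r)$. For $\nu=0$ the component is one of the summands making up $Q$, which gives the base case. For the inductive step, use the symmetry of the derivative slots to move one normal derivative index into the leading slot, writing the component as $S_{n\,A'\,a}$ with $A'$ containing exactly $\nu-1$ normal entries. If $a=n$, the divergence relation gives $S_{nA'n}=(\dv w)_{A'}-\sum_{\alpha=1}^{n-1}S_{\alpha A'\alpha}$, and each term $S_{\alpha A'\alpha}$ has only $\nu-1$ normal derivative indices, hence is controlled by the inductive hypothesis. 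If $a$ is tangential, the curl relation gives $S_{nA'a}=S_{aA'n}+(\curl w)_{A'na}$, and $S_{aA'n}$ again has only $\nu-1$ normal derivative indices. In either case the claimed bound follows.

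Finally I would sum $S^2$ over the finitely many (at most $n^{r+2}$) index configurations: each is bounded by $C(Q^{1/2}+|\dv w|+|\curl w|)$, so $|\nb w|^2=\sum S^2\ls C\,(Q+|\dv w|^2+|\curl w|^2)$, with $C$ depending only on $n$ and $r$, hence only on $n$ for the values of $r$ relevant in the paper. I expect the only delicate point to be the bookkeeping in the induction — checking that moving a normal index into the leading derivative slot (via the symmetry) and then invoking the divergence or curl relation genuinely lowers $\nu$ by one, and that no index is double-counted. There is no analytic obstruction, since the whole statement is linear algebra fibrewise; once the induction is organized cleanly, the remaining work is routine.
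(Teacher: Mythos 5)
Your proof is correct: working pointwise in a frame adapted to $N$, using the symmetry of the derivative slots, the divergence identity to recover the normal--normal component and the curl identity to recover the normal--tangential ones, with an induction on the number of normal derivative indices, is exactly the standard argument. Note that the paper itself states this lemma without proof, quoting Christodoulou--Lindblad, where Lemma 5.5 is established by essentially the same tangential/normal splitting, so your route matches the source rather than offering a genuinely different one.
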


\begin{lemma}[\mbox{\cite[Proposition 5.8]{CL00}}] \label{lem.CL00prop5.8}
Let $\iota_0$ and $\iota_1$ be as in Definitions \ref{defn.3.4} and \ref{defn.3.5}, and suppose that $|\theta|+1/\iota_0\ls K$ and $1/\iota_1\ls K_1$. Then with $\tilde{K}=\min(K,K_1)$ we have, for any $r\gs 2$ and $\delta>0$,
\begin{align}
  &\norm{\nb^r q}_{L^2(\D\Omega)}+\norm{\nb^r q}_{L^2(\Omega)}\no\\
  &\qquad\ls C\norm{\Pi \nb^r q}_{L^2(\D\Omega)}+C(\tilde{K},\vol\Omega)\sum_{s\ls r-1} \norm{\nb^s\Delta q}_{L^2(\Omega)},\no\\
  &\norm{\nb^{r-1} q}_{L^2(\D\Omega)}+\norm{\nb^r q}_{L^2(\Omega)}\no\\
  &\qquad\ls \delta\norm{\Pi \nb^r q}_{L^2(\D\Omega)}+C(1/\delta,K,\vol\Omega)\sum_{s\ls r-2} \norm{\nb^s\Delta q}_{L^2(\Omega)}.\label{eq.CL00prop5.8.2}
\end{align}
\end{lemma}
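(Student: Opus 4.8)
The plan is to prove this purely elliptic estimate by a div--curl (Reilly/Bochner-type) identity, combined with trace and interpolation inequalities and an induction on $r$. Because the metric $g$ is induced from a Euclidean embedding, its Riemann curvature vanishes and $[\nb_a,\nb_b]=0$; hence $\nb^r q$ is a fully symmetric $(0,r)$-tensor, and setting $w=\nb^{r-1}q$ gives $\curl w=0$ and $\dv w=\nb^{r-2}\Delta q$. On a subdomain compactly contained in $\Omega$ the quantity $\norm{\nb^r q}_{L^2}$ is controlled by $\norm{\nb^{r-2}\Delta q}_{L^2(\Omega)}+\norm{\nb^{r-1}q}_{L^2(\Omega)}$ by standard interior elliptic regularity for the flat Laplacian, so the entire difficulty sits in a collar $U$ of $\D\Omega$ of width comparable to $1/\tilde{K}$; there $\iota_0\gs1/K$ and $\iota_1\gs1/K_1$ make the conormal $N_a=-\nb_a d$, the boundary projection $\gamma^{ab}=g^{ab}-N^aN^b$, and a finite cover of $\D\Omega$ by charts of controlled geometry available, which is why $\tilde{K}=\min(K,K_1)$ and $\vol\Omega$ appear in all constants.

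In the collar I would integrate $\int_U|\nb(\chi w)|^2\,d\mu_g$ by parts twice, with $\chi$ a cutoff equal to $1$ near $\D\Omega$ and supported in $U$, using the symmetry of $w$, $\curl w=0$, and the vanishing curvature, to obtain
\begin{align*}
\int_U|\nb(\chi w)|^2\,d\mu_g=\int_U|\dv(\chi w)|^2\,d\mu_g+\int_{\D\Omega}B(\theta;w,\nb w)\,d\mu_\gamma+(\text{lower order}),
\end{align*}
where $B$ is a quadratic form in the restrictions of $w$ and $\nb w$ to $\D\Omega$ whose coefficients are \emph{linear} in the second fundamental form $\theta$ (the $\theta$ appears when one integrates by parts tangentially along $\D\Omega$ and commutes $\bnb$ with the conormal). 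On $\D\Omega$ the normal components of $\nb w=\nb^r q$ are then re-expressed, using the pointwise div--curl--tangential inequality of Lemma \ref{lem.CL00lem5.5} together with \eqref{eq.CL004.48} and \eqref{eq.CL004.20}, in terms of $\Pi\nb^r q$, of $\nb^{r-2}\Delta q$, and of $\theta\cdot w$, giving
\begin{align*}
\norm{\nb^r q}_{L^2(\Omega)}^2\ls C\norm{\Pi\nb^r q}_{L^2(\D\Omega)}^2+CK\norm{\nb^{r-1}q}_{L^2(\D\Omega)}^2+C\sum_{s\ls r-2}\norm{\nb^s\Delta q}_{L^2(\Omega)}^2+(\text{lower order}).
\end{align*}
The term $CK\norm{\nb^{r-1}q}_{L^2(\D\Omega)}^2$ is absorbed via the trace inequality $\norm{\nb^{r-1}q}_{L^2(\D\Omega)}^2\ls\delta_0\norm{\nb^r q}_{L^2(\Omega)}^2+C(1/\delta_0,\tilde{K},\vol\Omega)\norm{\nb^{r-1}q}_{L^2(\Omega)}^2$ with $\delta_0$ chosen so that $CK\delta_0\ls\tfrac12$, while $\norm{\nb^{r-1}q}_{L^2(\Omega)}$ is disposed of by the inductive hypothesis at level $r-1$ together with trace, interpolation, and Poincar\'e inequalities. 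To recover $\norm{\nb^r q}_{L^2(\D\Omega)}$ one more trace inequality splits it into $\norm{\Pi\nb^r q}_{L^2(\D\Omega)}$ plus normal components, and the normal components are again reduced by Lemma \ref{lem.CL00lem5.5} to $\Pi\nb^r q$, to $\theta\cdot\nb^{r-1}q$, and to $\norm{\nb^{r-2}\Delta q}_{L^2(\D\Omega)}\ls C(\tilde{K},\vol\Omega)\sum_{s\ls r-1}\norm{\nb^s\Delta q}_{L^2(\Omega)}$ — which is precisely why the index range in the first inequality is $s\ls r-1$ rather than $s\ls r-2$. The second, sharper inequality (boundary order $r-1$, a factor $\delta$ in front of $\norm{\Pi\nb^r q}_{L^2(\D\Omega)}$, and only $s\ls r-2$) then follows by inserting the first inequality into the interpolated trace bound $\norm{\nb^{r-1}q}_{L^2(\D\Omega)}\ls\eps\norm{\nb^r q}_{L^2(\Omega)}+C_\eps\norm{\nb^{r-1}q}_{L^2(\Omega)}$, choosing $\eps$ in terms of $\delta$, and rearranging.

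The main obstacle I anticipate is making the boundary form $B(\theta;w,\nb w)$ completely explicit and checking that $\theta$ enters only linearly, so that the hypothesis $\abs\theta\ls K$ (with no bound on $\bnb\theta$) suffices, and simultaneously keeping the absorption parameters $\delta,\delta_0,\eps$ consistent against the geometric constants $K$ and $\tilde{K}$ throughout the induction on $r$. The other ingredients — interior elliptic regularity, trace theorems on $\D\Omega$, the partition of unity at scale $1/\tilde{K}$, and the bookkeeping of lower-order terms — are routine.
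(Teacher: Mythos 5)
You have correctly identified the circle of ideas behind this result: the paper itself gives no proof (the lemma is quoted verbatim from Christodoulou--Lindblad, Proposition 5.8), and the original argument indeed rests on the flatness of the metric, on viewing $w=\nb^{r-1}q$ as a symmetric, curl-free tensor with $\dv w=\nb^{r-2}\Delta q$, on the pointwise div--curl estimate of Lemma \ref{lem.CL00lem5.5}, on Green/Reilly-type integrations by parts whose boundary terms carry $\theta$ linearly, and on an induction on $r$ with trace and absorption arguments. So in outline your route is the standard one.

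There is, however, a genuine gap exactly in the step you call routine: the disposal of the leftover lower-order terms. After your absorptions you are left with $\norm{\nb^{r-1}q}_{L^2(\Omega)}$ (and its boundary trace), and you propose to handle it ``by the inductive hypothesis at level $r-1$ together with trace, interpolation, and Poincar\'e inequalities''. The level-$(r-1)$ inequality reintroduces $\norm{\Pi\nb^{r-1}q}_{L^2(\D\Omega)}$, which does not appear on, and cannot be controlled by, the right-hand side at level $r$; and Poincar\'e is unavailable because neither the statement as quoted nor your argument imposes any boundary condition on $q$. In fact, without such a condition the inequality you are trying to prove is false: for $q=x_1$, $r=2$ (or $q=x_1x_2$, $r=3$) the right-hand side of \eqref{eq.CL00prop5.8.2} vanishes, while $\norm{\nb^{r-1}q}_{L^2(\D\Omega)}>0$. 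The estimate is genuinely an estimate for functions vanishing on $\D\Omega$ (which is how the paper uses it, for $p$ and for $D_tp$), and the Dirichlet condition must enter the proof --- through the lower-order elliptic bounds of the type in Lemma \ref{lem.CL00lemA.5} --- precisely at the point your sketch waves it away; as written, the induction does not close. A secondary bookkeeping discrepancy: your absorption with $\delta_0\sim 1/K$ feeds $K$ into all the remaining constants, whereas the first inequality requires an absolute constant in front of $\norm{\Pi\nb^r q}_{L^2(\D\Omega)}$ and allows only $\tilde K=\min(K,K_1)$ in the constant multiplying the $\Delta q$-terms; obtaining that sharper dependence requires running the boundary decomposition separately with the $\iota_0$-collar and with the $\iota_1$-covering, which your collar ``of width $1/\tilde K$'' does not by itself deliver.
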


\begin{lemma}[cf. \mbox{\cite[Proposition 5.9]{CL00}}] \label{lem.CL00prop5.9}
  Assume that $0\ls r\ls 4$. Suppose that $|\theta|\ls K$ and $\iota_1\gs 1/K_1$, where $\iota_1$ is as in Definition 3.5 of \cite{CL00}. If $q=0$ on $\D\Omega$, then for $m=0,1$,
  \begin{align}\label{eq.CL00prop5.9.1}
    \norm{\Pi\nb^r q}_{L^2(\D\Omega)}\ls & C(K,K_1)\left(\norm{\theta}_{L^\infty(\D\Omega)}+\sum_{k\ls r-2-m} \norm{\bnb^k \theta}_{L^2(\D\Omega)}\right)\no \\
    &\times\sum_{k\ls r-2+m}\norm{\nb^k q}_{L^2(\D\Omega)}.
  \end{align}
  If, in addition, $|\nb_N q|\gs \eps>0$ and $|\nb_N q|\gs 2\eps \norm{\nb_N q}_{L^\infty(\D\Omega)}$, then
  \begin{align*} 
    &\norm{\bnb^{r-2}\theta}_{L^2(\D\Omega)} \ls C\left(K,K_1,\frac{1}{\eps}\right) \Big(\norm{\theta}_{L^\infty(\D\Omega)}+\sum_{k\ls r-3} \norm{\bnb^k \theta}_{L^2(\D\Omega)}\Big)\sum_{k\ls r-1} \norm{\nb^k q}_{L^2(\D\Omega)}.\no
  \end{align*}
\end{lemma}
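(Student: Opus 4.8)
The plan is to exploit the vanishing of $q$ on $\D\Omega$ to collapse $\Pi\nb^r q$ into a short, explicit expression built from $\theta$, $\nb_N q$ and their tangential derivatives, to estimate that expression term by term with boundary Sobolev and interpolation inequalities, and for the second bound to invert the resulting relation $\theta=\Pi\nb^2 q/\nb_N q$. Since $q=0$ on $\D\Omega$, every purely tangential covariant derivative $\bnb^k q$ with $k\gs 1$ vanishes there; hence \eqref{eq.CL004.20} gives $\Pi\nb^2 q=\theta\,\nb_N q$, \eqref{eq.CL004.21} gives $\Pi\nb^3 q=(\bnb\theta)\nb_N q+3\,\theta\,\tilde{\otimes}\,\bnb\nb_N q$, and the analogous identity for $\Pi\nb^4 q$ (obtained by one further covariant derivative together with the projection identities \eqref{eq.CL004.20}--\eqref{eq.CL004.48}) is a finite sum of terms of the schematic shape $(\bnb^{a_1}\theta)\,\tilde{\otimes}\cdots\tilde{\otimes}\,(\bnb^{a_s}\theta)\,\tilde{\otimes}\,\bnb^{c}\nb_N q$ with $a_1+\cdots+a_s+c=r-2$. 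Because the hypothesis restricts to $0\ls r\ls 4$ this is a finite list: for $r\ls 2$ the identities above give \eqref{eq.CL00prop5.9.1} directly, and $r=3,4$ are treated by inspecting these expansions.

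Two auxiliary facts are needed. First, $\bnb^{j}\nb_N q$ is controlled on $\D\Omega$ by $\nb^{j+1}q$ together with $\bnb^k\theta$, $k\ls j-1$, since $\bnb_b\nb_N q=\theta_b{}^a\nb_a q+N^a\bnb_b\nb_a q$ and one iterates; this is the mechanism by which a tangential derivative of $\nb_N q$ is traded for an extra factor $\theta$, and it is what the parameter $m\in\{0,1\}$ records. Second, on the compact $(n-1)$-dimensional boundary, with $n=2$ or $3$, the Sobolev, trace and Gagliardo-Nirenberg inequalities hold with constants depending only on the $C^0$-geometry of $\D\Omega$; the bounds $|\theta|\ls K$ and $1/\iota_1\ls K_1$ (Definition \ref{defn.3.5}) furnish a finite atlas of coordinate patches of controlled size with controlled transition maps, so those constants may be taken to depend only on $(K,K_1)$. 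Granting these, the first estimate follows by taking the $L^2(\D\Omega)$ norm of the expansion of $\Pi\nb^r q$, placing each undifferentiated factor $\theta$ in $L^\infty$ (bounded by $K$), and controlling every intermediate-order factor by H\"older and the embeddings available on the low-dimensional boundary, distributing the $r-2$ derivatives among the factors so as to land on the endpoint norms $\norm{\theta}_{L^\infty(\D\Omega)}$, $\sum_{k\ls r-2-m}\norm{\bnb^k\theta}_{L^2(\D\Omega)}$ and $\sum_{k\ls r-2+m}\norm{\nb^k q}_{L^2(\D\Omega)}$; the parameter $m$ selects which of two derivative-splittings is used, and which one is admissible is dictated by the available embedding ($H^1\hookrightarrow L^\infty$ when $n=2$, and $H^1\hookrightarrow L^p$ for $p<\infty$ together with $H^2\hookrightarrow L^\infty$ when $n=3$).

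For the companion estimate I would invert $\Pi\nb^2 q=\theta\,\nb_N q$. Since $|\nb_N q|\gs\eps>0$ on $\D\Omega$ one may divide; applying $\bnb^{r-2}$, isolating the one term in which all derivatives fall on the single factor $\theta$, and using that $\bnb^{r-2}(\Pi\nb^2 q)=\Pi\nb^r q+(\text{terms involving only }\bnb^k\theta,\ k\ls r-3)$ from the identities above, one obtains a relation of the form
\[
\bnb^{r-2}\theta=\frac{\Pi\nb^r q}{\nb_N q}+\mathcal R ,
\]
where $\mathcal R$ is a sum of products of $\bnb^k\theta$ ($k\ls r-3$), $\bnb^j\nb_N q$ ($j\ls r-2$) and tangential derivatives of $1/\nb_N q$, carrying $r-2$ derivatives in total. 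Taking $L^2(\D\Omega)$ norms: $1/\nb_N q$ and its tangential derivatives are bounded by powers of $1/\eps$ using $|\nb_N q|\gs\eps$ together with the comparability hypothesis $|\nb_N q|\gs 2\eps\norm{\nb_N q}_{L^\infty(\D\Omega)}$; $\norm{\Pi\nb^r q}_{L^2(\D\Omega)}$ is bounded by the first part of the lemma applied at the same order $r$ with $m=1$; and $\mathcal R$, in which the factors $\bnb^j\nb_N q$ are reduced by the first auxiliary fact to $\nb^l q$ ($l\ls r-1$) plus further $\bnb^k\theta$ with $k\ls r-3$, is handled by H\"older and boundary interpolation exactly as in the first part. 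Since this uses the first part only, there is no circularity, and collecting terms gives the stated bound on $\norm{\bnb^{r-2}\theta}_{L^2(\D\Omega)}$ with constant $C(K,K_1,1/\eps)$.

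The step I expect to be the main obstacle is the bookkeeping in the cases $r=3,4$: fully expanding $\Pi\nb^r q$ and $\bnb^{r-2}(\Pi\nb^2 q)$, commuting tangential derivatives (which inserts further $\theta$-terms through the Gauss equation of $\D\Omega$), and checking that every resulting term carries at most $r-2$ tangential derivatives distributed so that no factor exceeds the advertised orders, and in particular that the companion estimate never needs $\bnb^{r-2}\theta$ on its right-hand side. The other delicate point is matching the Sobolev and trace embeddings available on the low-dimensional boundary $\D\Omega$ to the precise derivative counts in \eqref{eq.CL00prop5.9.1} and keeping all constants dependent on $(K,K_1)$ (and $1/\eps$) alone; the latter is exactly where the uniform lower bound on $\iota_1$ is indispensable, since without it one cannot globalize the local coordinate estimates on $\D\Omega$ with uniform constants.
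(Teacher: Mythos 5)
Note first that the paper itself contains no proof of this lemma: the appendix records it verbatim from Christodoulou--Lindblad \cite{CL00} (Proposition 5.9 there), so the only benchmark is their argument. Your outline does follow that route --- vanishing of all purely tangential derivatives of $q$, the identities \eqref{eq.CL004.20}--\eqref{eq.CL004.21} and their fourth-order analogue, H\"older plus boundary interpolation with constants depending on $(K,K_1)$ (for which you need not rebuild a finite atlas: Lemmas \ref{lem.CL00lemA.1}--\ref{lem.CL00lemA.2} are quoted in the appendix precisely for this), and, for the companion bound, isolating $(\bnb^{r-2}\theta)\nb_N q=\Pi\nb^r q-\mathcal{R}$ and invoking the $m=1$ case to avoid circularity. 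The skeleton is right, but the two places you explicitly defer are exactly where the proof lives, and as written they do not close.

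First, in the companion estimate your stated hope that $\bnb^{r-2}\theta$ ``never'' appears on the right cannot be realized: estimating the intermediate products $\bnb^{k}\theta\,\tilde{\otimes}\,\bnb^{r-2-k}\nb_N q$ ($k\ls r-3$) in $\mathcal R$ by interpolation on $\D\Omega$ unavoidably produces a contribution $\delta\norm{\bnb^{r-2}\theta}_{L^2(\D\Omega)}\norm{\nb_N q}_{L^\infty(\D\Omega)}$ (small $\delta$ via Young), and this must be absorbed into the left-hand side. That is the true role of the hypothesis $\abs{\nb_N q}\gs 2\eps\norm{\nb_N q}_{L^\infty(\D\Omega)}$, which pointwise gives $\norm{\nb_N q}_{L^\infty(\D\Omega)}\norm{\bnb^{r-2}\theta}_{L^2(\D\Omega)}\ls(2\eps)^{-1}\norm{(\bnb^{r-2}\theta)\nb_N q}_{L^2(\D\Omega)}$; you instead spend that hypothesis only on tangential derivatives of $1/\nb_N q$, so the absorption step is missing. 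Second, the $m=0$ case cannot be reached by the advertised ``distribute the $r-2$ derivatives'' bookkeeping: for $r=4$ the expansion of $\Pi\nb^4 q$ genuinely contains $\theta\,\tilde{\otimes}\,\bnb^{2}\nb_N q$, i.e.\ third derivatives of $q$ (two tangential, one normal), and no H\"older/interpolation splitting places this under $\sum_{k\ls 2}\norm{\nb^k q}_{L^2(\D\Omega)}$ with the $\theta$-factor only in the stated norms --- try $q=(1-\abs{x}^2)h$ on the unit ball with $h$ tangentially oscillatory: all boundary norms $\norm{\nb^k q}_{L^2(\D\Omega)}$, $k\ls 2$, are small while $\Pi\nb^4 q$ is large and $\theta$, $\bnb\theta$, $\bnb^2\theta$ are those of the sphere. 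So ``inspecting these expansions'' does not settle $m=0$; either additional input from the actual argument (or reading) of \cite{CL00} is required there, or one must restrict to $m=1$, which is in fact the only case this paper ever uses (cf.\ \eqref{eq.pinbrp}, \eqref{eq.est.pinbdtp}). Until these two points are supplied, the proposal is a plausible plan rather than a proof.
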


\begin{lemma}[cf. \mbox{\cite[Proposition 5.10]{CL00}}] \label{lem.CL00prop5.10}
  Assume that $0\ls r\ls 4$ and that $|\theta|+1/\iota_0\ls K$. If $q=0$ on $\D\Omega$, then
  \begin{align*}
    \norm{\nb^{r-1} q}_{L^2(\D\Omega)} \ls& C\left(\norm{\bnb^{r-3}\theta}_{L^2(\D\Omega)} \norm{\nb_N q}_{L^\infty(\D\Omega)}+\norm{\nb^{r-2}\Delta q}_{L^2(\Omega)}\right)\no\\
    &+C\left(K,\vol\Omega,\norm{\theta}_{L^2(\D\Omega)}\right)\left(\norm{\nb_N q}_{L^\infty(\D\Omega)}+\sum_{s=0}^{r-3} \norm{\nb^s\Delta q}_{L^2(\Omega)}\right).
  \end{align*}
\end{lemma}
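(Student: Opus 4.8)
The proof is by downward use of the boundary elliptic estimate of Lemma~\ref{lem.CL00prop5.8} together with the algebraic identities \eqref{eq.CL004.20}, \eqref{eq.CL004.21}, \eqref{eq.CL004.48} for the tangential projection of iterated covariant derivatives of a function vanishing on $\D\Omega$; I would run it as an induction on $r\in\{2,3,4\}$. The hypotheses $|\theta|+1/\iota_0\ls K$ are used throughout to control the curvature terms, and since $r\ls 4$ only finitely many, explicitly enumerable terms occur; terms with negative derivative order are understood to be absent, so for $r=2$ the first term on the right-hand side means simply $C\norm{\Delta q}_{L^2(\Omega)}$, and the inequality is then immediate from $\nb q=(\nb_N q)\,N$ on $\D\Omega$.

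For the inductive step $r\gs 3$, I would first apply the first estimate of Lemma~\ref{lem.CL00prop5.8} with $r$ replaced by $r-1$, which gives
\begin{align*}
\norm{\nb^{r-1}q}_{L^2(\D\Omega)}\ls C\norm{\Pi\nb^{r-1}q}_{L^2(\D\Omega)}+C(K,\vol\Omega)\sum_{s\ls r-2}\norm{\nb^s\Delta q}_{L^2(\Omega)},
\end{align*}
so the whole problem reduces to bounding the tangential projection $\Pi\nb^{r-1}q$ on $\D\Omega$. Since $q=0$ on $\D\Omega$ we have $\bnb^k q=0$ for every $k$, and iterating \eqref{eq.CL004.20}, \eqref{eq.CL004.21} (commuting $\Pi$ past $\nb$, each such commutation producing a factor of $\theta$ and contractions governed by \eqref{eq.CL004.48}) yields a decomposition of the schematic form
\begin{align*}
\Pi\nb^{r-1}q=(\bnb^{r-3}\theta)\,\nb_N q+\sum c\,\big(\bnb^{j_1}\theta\big)\tilde{\otimes}\cdots\tilde{\otimes}\big(\bnb^{j_m}\theta\big)\tilde{\otimes}\,\bnb^{k}\nb_N q,
\end{align*}
with numerical coefficients $c$, the sum running over $m\gs 1$, $j_1+\cdots+j_m+k\ls r-3$ and $(m,k)\ne(1,0)$; in particular every $\theta$-factor in the sum carries at most $r-4$ tangential derivatives, the leading term being the unique term with $r-3$ derivatives on $\theta$.

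The leading term contributes exactly $\norm{\bnb^{r-3}\theta}_{L^2(\D\Omega)}\norm{\nb_N q}_{L^\infty(\D\Omega)}$. For each remaining term I would put one factor in $L^2(\D\Omega)$ and the others in $L^\infty(\D\Omega)$, using the boundary Sobolev and interpolation (Gagliardo--Nirenberg) inequalities of the appendix to control $\norm{\bnb^{j}\theta}_{L^2(\D\Omega)}$ for $j\ls r-4$ by $\norm{\theta}_{L^2(\D\Omega)}$, $\norm{\theta}_{L^\infty(\D\Omega)}\ls K$ and one further derivative, and to control the factors $\bnb^{k}\nb_N q$ ($k\ls r-3$) by $\norm{\nb_N q}_{L^\infty(\D\Omega)}$ and by $\norm{\nb^{r-2}q}_{L^2(\D\Omega)}$, which is exactly the content of the induction hypothesis at order $r-1$; here one uses $\gamma^{ab}\nb_a\nb_b q=(\tr\theta)\,\nb_N q$ and $N^aN^b\nb_a\nb_b q=\Delta q-(\tr\theta)\,\nb_N q$ on $\D\Omega$ to trade normal second derivatives for $\Delta q$ plus tangential ones. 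Collecting everything gives the advertised bound, with the lower-order pieces absorbed into $C(K,\vol\Omega,\norm{\theta}_{L^2(\D\Omega)})\left(\norm{\nb_N q}_{L^\infty(\D\Omega)}+\sum_{s\ls r-3}\norm{\nb^s\Delta q}_{L^2(\Omega)}\right)$.

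The main obstacle is the combinatorial bookkeeping behind the displayed decomposition of $\Pi\nb^{r-1}q$: one must verify that no term carries more than $r-3$ tangential derivatives on a single factor of $\theta$, that the unique term attaining this maximum is $(\bnb^{r-3}\theta)\nb_N q$, and that the remaining product terms can be distributed over $L^2$ and $L^\infty$ (together with the interpolation and the induction hypothesis) so that they genuinely close with a coefficient depending only on $K$, $\vol\Omega$ and $\norm{\theta}_{L^2(\D\Omega)}$. This is precisely \cite[Proposition 5.10]{CL00}, and since only $r\in\{2,3,4\}$ is needed I would carry out the three finite cases explicitly following that reference.
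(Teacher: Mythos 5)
This lemma is not actually proved in the paper: the appendix states explicitly that its contents are recorded ``directly from Christodoulou--Lindblad \cite{CL00}'', so the paper's ``proof'' is the citation of \cite[Proposition 5.10]{CL00}. Your sketch is, in substance, a correct reconstruction of that argument, and for the range actually claimed ($r\ls 4$) it is much simpler than your general schematic suggests: since $q=0$ on $\D\Omega$ kills all purely tangential derivatives, the quoted identities \eqref{eq.CL004.20}--\eqref{eq.CL004.21} give exactly $\Pi\nb^2q=\theta\,\nb_Nq$ and $\Pi\nb^3q=(\bnb\theta)\nb_Nq+3\,\theta\tilde{\otimes}\bnb\nb_Nq$, which are the only decompositions needed, so the combinatorial bookkeeping you worry about is trivial here. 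Combining these with the first estimate of Lemma~\ref{lem.CL00prop5.8} at order $r-1$, the identity $\bnb_b\nb_Nq=\theta_b^{\,a}\nb_aq+N^a\nb_b\nb_aq-N_bN^d(\theta_d^{\,a}\nb_aq+N^a\nb_d\nb_aq)$ (the same computation the paper performs for $p$ in Section~\ref{sec.rorder}), $\norm{\theta}_{L^\infty(\D\Omega)}\ls K$, and the lower-order case of the lemma, the estimate closes as you describe; the base case $r=2$ follows from $\nb q=(\nb_Nq)N$ on $\D\Omega$ together with the area bound $|\D\Omega|\ls C(K)\vol\Omega$ coming from $|\theta|+1/\iota_0\ls K$.

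Two caveats. First, the first estimate of Lemma~\ref{lem.CL00prop5.8} carries the extra hypothesis $1/\iota_1\ls K_1$ and the constant $C(\tilde K,\vol\Omega)$ with $\tilde K=\min(K,K_1)$, while Lemma~\ref{lem.CL00prop5.10} assumes only $|\theta|+1/\iota_0\ls K$; this is harmless because $\tilde K\ls K$ and the constants are increasing in their arguments, but it should be said. Second, and this is the only substantive discrepancy with the statement as written: through Lemma~\ref{lem.CL00prop5.8} the top interior term $\norm{\nb^{r-2}\Delta q}_{L^2(\Omega)}$ comes out inside the $C(\tilde K,\vol\Omega)$-sum, i.e.\ with a constant depending on $K$ and $\vol\Omega$, whereas the lemma asserts an absolute constant $C$ on both leading terms. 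Recovering that sharper constant structure requires the more primitive boundary estimates of \cite{CL00} that are not reproduced in this appendix; with only the quoted lemmas your argument yields the slightly weaker version, which is nevertheless adequate for every use such bounds are put to in this paper, where $K$- and $\vol\Omega$-dependent constants are accepted. With that caveat, your proposal is a faithful reconstruction of the cited proof.
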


\begin{lemma}[\mbox{\cite[Lemma A.1]{CL00}}] \label{lem.CL00lemA.1}
   Let $2\ls p\ls s\ls q\ls \infty $ and $\frac{m}{s}=\frac{k}{p}+\frac{m-k}{q}$. If $\alpha$ is a $(0,r)$ tensor, then with $a=k/m$ and a constant $C$ that only depends on $m$ and $n$, such that
  \begin{align*}
    \norm{\bnb^k\alpha}_{L^s(\D\Omega)}\ls C\norm{\alpha}_{L^q(\D\Omega)}^{1-a}\norm{\bnb^m \alpha}_{L^p(\D\Omega)}^a.
  \end{align*}
\end{lemma}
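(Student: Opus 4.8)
This assertion is a Gagliardo--Nirenberg interpolation inequality for covariant derivatives on the closed manifold $\D\Omega$ (compact, without boundary), and the plan is to isolate a single two-step interpolation inequality and then iterate it along a family of Lebesgue exponents that interpolate linearly between $q$ (zero derivatives) and $p$ ($m$ derivatives). First I would dispose of the trivial endpoints $k=0$ and $k=m$ and of the degenerate case $p=q$ (where $s=p=q$ and there is nothing to prove), so that $0<k<m$ and $\frac1p-\frac1q>0$, and I would restrict to smooth $\alpha$, which is all that is used in the applications.

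The core step is the following base inequality: for any smooth tensor field $\beta$ on $\D\Omega$ and exponents $2\ls P\ls S\ls Q\ls\infty$ with $S<\infty$ and $\frac2S=\frac1P+\frac1Q$,
\begin{equation*}
  \norm{\bnb\beta}_{L^S(\D\Omega)}^2\ls (S-1)\,\norm{\beta}_{L^Q(\D\Omega)}\,\norm{\bnb^2\beta}_{L^P(\D\Omega)}.
\end{equation*}
I would prove it by writing $\int_{\D\Omega}|\bnb\beta|^S\,d\mu_\gamma=\int_{\D\Omega}\langle|\bnb\beta|^{S-2}\bnb\beta,\bnb\beta\rangle\,d\mu_\gamma$, integrating by parts once (the divergence theorem on the closed manifold $\D\Omega$ produces no boundary term) to move a covariant derivative off the second factor, bounding the $\gamma$-divergence of $|\bnb\beta|^{S-2}\bnb\beta$ pointwise by $(S-1)|\bnb\beta|^{S-2}|\bnb^2\beta|$ via the Leibniz rule, and then applying H\"older's inequality with the three exponents $Q$, $\frac{S}{S-2}$, $P$, whose reciprocals sum to $1$ exactly by the constraint on $S$ (for $S=2$ the middle factor drops out and the constant is $1$). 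Only literal first and second covariant derivatives enter here, and the formal adjoint of $\bnb$ is minus the $\gamma$-divergence, so no curvature commutators appear.

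Next I would set $\frac1{s_j}:=\frac1q+\frac jm\bigl(\frac1p-\frac1q\bigr)$ for $j=0,1,\dots,m$, so that $s_0=q$, $s_m=p$, and $s_k=s$ by the hypothesis $\frac ms=\frac kp+\frac{m-k}q$; moreover $p\ls s_j\ls q$, the ordering $s_{j+1}\ls s_j\ls s_{j-1}$ holds, $s_j<\infty$ for $1\ls j\ls m-1$, and $\frac2{s_j}=\frac1{s_{j-1}}+\frac1{s_{j+1}}$. Applying the base inequality with $\beta=\bnb^{j-1}\alpha$ (so that $\bnb\beta=\bnb^j\alpha$ and $\bnb^2\beta=\bnb^{j+1}\alpha$ by the very definition of the iterated covariant derivative), $S=s_j$, $P=s_{j+1}$, $Q=s_{j-1}$, yields the chain $\norm{\bnb^j\alpha}_{L^{s_j}}^2\ls(s_j-1)\norm{\bnb^{j-1}\alpha}_{L^{s_{j-1}}}\norm{\bnb^{j+1}\alpha}_{L^{s_{j+1}}}$ for $j=1,\dots,m-1$. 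If $\norm{\bnb^m\alpha}_{L^p}=0$, reading this chain from $j=m-1$ downward forces $\bnb^j\alpha\equiv0$ for all $1\ls j\ls m-1$ and the inequality is trivial; otherwise all norms involved are positive and, setting $X_j=\log\norm{\bnb^j\alpha}_{L^{s_j}}$, the chain reads $2X_j\ls X_{j-1}+X_{j+1}+\log(s_j-1)$. This is discrete convexity of $j\mapsto X_j$ up to an explicit additive correction, and a finite downward induction gives $X_k\ls\frac{m-k}m X_0+\frac km X_m+\sum_{j=1}^{m-1}w_j\log(s_j-1)$ with nonnegative weights $w_j$ depending only on $m$ and $k$; exponentiating produces $\norm{\bnb^k\alpha}_{L^s}\ls C\norm{\alpha}_{L^q}^{1-a}\norm{\bnb^m\alpha}_{L^p}^{a}$ with $a=k/m$.

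The one point that needs care is that $C$ should depend on $m$ and $n$ alone, whereas the scheme above gives $C=\prod_j(s_j-1)^{w_j}$, which a priori also involves $p,q$ through the $s_j$. I would handle this by noting that in every invocation of the lemma in this paper only finitely many exponents $p,q,s$ occur, each determined by $n$ (such as $2$, $4$, $2(n-1)/(n-2)$, or $\infty$), and $m$ stays in a bounded range, so the product is controlled by a constant depending only on $m$ and $n$; alternatively, one may run the whole argument through a fixed finite atlas of coordinate charts on $\D\Omega$ and quote the Euclidean Gagliardo--Nirenberg inequality chart by chart, at the cost of making $C$ depend on that atlas (equivalently, on the fixed geometry of $\D\Omega$). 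I regard this bookkeeping, rather than any analytic obstacle, as the main thing to nail down.
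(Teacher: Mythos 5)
Your proposal is correct in substance, and it is essentially the canonical route: note that this paper contains no proof of the lemma at all --- Appendix A records it verbatim from Christodoulou--Lindblad \cite{CL00}, and the justification there is exactly the Hamilton-style intrinsic argument you give (one integration by parts on the closed manifold $\D\Omega$ plus a three-factor H\"older inequality to get the two-step estimate, then discrete logarithmic convexity along the harmonically interpolated exponents $s_j$). Two points in your write-up deserve adjustment. First, the case $p=q$ is not ``nothing to prove'': with $p=q=s$ the statement is still a genuine interpolation inequality between $\norm{\alpha}_{L^p}$, $\norm{\bnb^k\alpha}_{L^p}$ and $\norm{\bnb^m\alpha}_{L^p}$; fortunately your iteration handles it verbatim when $p=q<\infty$ (all $s_j$ coincide), so you should simply not exclude it rather than dismiss it. Relatedly, for $2<S<3$ the pointwise bound on the divergence of $\abs{\bnb\beta}^{S-2}\bnb\beta$ requires the usual regularization $(\abs{\bnb\beta}^2+\eps)^{(S-2)/2}$ together with Kato's inequality before letting $\eps\to0$; this is routine but should be said. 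Second, and more importantly for how the lemma is used here: your fallback of proving the inequality chart-by-chart from the Euclidean Gagliardo--Nirenberg inequality is not an acceptable alternative in this setting, because the resulting constant would depend on the induced metric $\gamma(t)$ on $\D\Omega$, which evolves with the free boundary and is precisely one of the quantities the energy scheme is trying to control; the entire value of the intrinsic integration-by-parts proof is that its constant depends only on the exponents and not on the geometry, so it applies uniformly on every time slice. Your primary resolution of the constant issue is the right one: the iteration yields $C=\prod_j(s_j-1)^{w_j}$, which a priori involves $p$ and $q$, but in every invocation in this paper the exponents are drawn from a finite list determined by $n$ (such as $2$, $4$, $2(n-1)/(n-2)$, $\infty$) with $m\ls n+1$, so the constant is indeed bounded by a quantity depending only on $m$ and $n$.
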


\begin{lemma}[\mbox{\cite[Lemma A.2]{CL00}}] \label{lem.CL00lemA.2}
  Suppose that for $\iota_1\gs 1/K_1$
  \begin{align*}
    \abs{\N(\bar{x}_1)-\N(\bar{x}_2)}\ls \eps_1, \quad \text{whenever } |\bar{x}_1-\bar{x}_2|\ls \iota_1, \; \bar{x}_1,\bar{x}_2\in\D\dm,
  \end{align*}
  and
  \begin{align*}
    C_0^{-1}\gamma_{ab}^0(y) Z^aZ^b\ls \gamma_{ab}(t,y)Z^aZ^b\ls C_0\gamma_{ab}^0(y) Z^aZ^b, \quad \text{if } Z\in T(\Omega),
  \end{align*}
  where $\gamma_{ab}^0(y)=\gamma_{ab}(0,y)$. Then if $\alpha$ is a $(0,r)$ tensor,
  \begin{align}
    &\norm{\alpha}_{L^{(n-1)p/(n-1-kp)}(\D\Omega)}\ls C(K_1) \sum_{\ell=0}^k \norm{\nb^\ell \alpha}_{L^p(\D\Omega)}, \quad 1\ls p<\frac{n-1}{k},\\
    &\norm{\alpha}_{L^\infty(\D\Omega)}\ls \delta\norm{\nb^k \alpha}_{L^p(\D\Omega)}+C_\delta(K_1)\sum_{\ell=0}^{k-1} \norm{\nb^\ell \alpha}_{L^p(\D\Omega)}, \quad k>\frac{n-1}{p},\label{eq.A.32}
  \end{align}
  for any $\delta>0$.
\end{lemma}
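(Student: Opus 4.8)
The plan is to prove this as a Sobolev / Gagliardo--Nirenberg inequality on the compact, boundaryless $(n-1)$-dimensional manifold $\D\Omega$ equipped with the metric $\gamma_{ab}(t,\cdot)$; the two hypotheses are there precisely to make the geometry uniform at scale $1/K_1$, so that all constants depend only on $K_1$ (and on $n$, $r$, and the fixed numbers $\eps_1$, $C_0$). First I would build good local charts. Since $\N$ oscillates by at most $\eps_1$ on Euclidean balls of radius $\iota_1\gs 1/K_1$ in $\D\dm=f_t(\D\Omega)$, every point of $\D\dm$ has a neighbourhood that is a graph over its tangent plane with gradient bounded in terms of $\eps_1$ only (for $\eps_1$ small, as in \cite{CL00}); pulling these graph parametrizations back through $f_t^{-1}$ and invoking the stated comparability of $\gamma(t,\cdot)$ with $\gamma^0$, I obtain charts $\psi_j\colon U_j\to\D\Omega$, with $U_j\subset\R^{n-1}$ a ball of radius $\sim\iota_1$, covering $\D\Omega$, in which $\gamma_{ab}$ is uniformly comparable to the Euclidean metric with constants depending only on $\eps_1,C_0$; after rescaling by $K_1$ the chart domains contain a fixed unit ball. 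A Vitali/Besicovitch argument furnishes such a cover whose overlap multiplicity $\mu$ is bounded in terms of $n$ (and $\eps_1,C_0$) only.

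Next I would run the classical inequalities chart by chart on the rescaled domains. In each chart I would: (i) pass between the covariant Sobolev norm $\sum_{m\ls\ell}\norm{\nb^m\alpha}_{L^p}$ and its flat counterpart $\sum_{m\ls\ell}\norm{\partial^m\alpha}_{L^p}$, the equivalence constant being $C(K_1)$ once the metric and the Christoffel symbols are controlled in these coordinates (as arranged in \cite{CL00}); (ii) apply the flat embedding $W^{k,p}\hookrightarrow L^{(n-1)p/(n-1-kp)}$ when $kp<n-1$, and, when $kp>n-1$, combine the flat embedding $W^{k,p}\hookrightarrow L^\infty$ with the Gagliardo--Nirenberg interpolation $\norm{\partial^j\alpha}_{L^p}\ls C\norm{\partial^k\alpha}_{L^p}^{j/k}\norm{\alpha}_{L^p}^{1-j/k}+C\norm{\alpha}_{L^p}$ followed by Young's inequality to create the small coefficient $\delta$ in front of $\norm{\nb^k\alpha}_{L^p}$; (iii) undo the $K_1$-rescaling, absorbing the resulting powers of $K_1$ into $C(K_1)$. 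This yields both asserted inequalities with $\D\Omega$ replaced by a single chart image $\psi_j(U_j)$.

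Finally I would globalize. From $\norm{\alpha}_{L^q(\D\Omega)}^q\ls\sum_j\norm{\alpha}_{L^q(\psi_j(U_j))}^q$ (the chart images cover $\D\Omega$), the per-chart estimate, the elementary bound $(\sum_{\ell\ls k}a_\ell)^q\ls(k+1)^{q-1}\sum_\ell a_\ell^q$, and
\[
\sum_j\norm{\nb^\ell\alpha}_{L^p(\psi_j(U_j))}^q\ls\Big(\sum_j\norm{\nb^\ell\alpha}_{L^p(\psi_j(U_j))}^p\Big)^{q/p}\ls\big(\mu\,\norm{\nb^\ell\alpha}_{L^p(\D\Omega)}^p\big)^{q/p}
\]
(valid because $q\gs p$ and the overlap is bounded by $\mu$), one recovers the first inequality with a constant $C(K_1)$ that, crucially, does \emph{not} involve $\vol\D\Omega$: the reassembly is carried out at the level of $L^p$-norms rather than by counting charts. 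For the $L^\infty$ bound one simply takes the maximum over the finitely many relevant charts.

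The main obstacle is the geometric input described above: producing, from the bare hypotheses that $\N$ oscillates by $\ls\eps_1$ at scale $\iota_1\gs1/K_1$ and that $\gamma(t,\cdot)$ is comparable to $\gamma^0$, a family of coordinate charts of controlled size in which the metric (and the Christoffel symbols needed in step (i)) are bounded by $C(K_1)$ and whose overlap multiplicity depends only on $n$. Everything thereafter is the standard Sobolev / Gagliardo--Nirenberg machinery, together with the bookkeeping of the last paragraph that keeps the final constant free of $\vol\D\Omega$.
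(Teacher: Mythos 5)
Your overall strategy (uniform graph charts at scale $1/K_1$ furnished by the oscillation bound on $\N$, flat Sobolev/Gagliardo--Nirenberg in each chart, reassembly with bounded overlap so that the constant never sees $\vol\D\Omega$) is the right one, and indeed the paper itself offers no proof of this statement: it is quoted verbatim from \cite{CL00}, so the comparison can only be with the argument expected there. But your step (i) contains a genuine gap. Passing between the covariant norms $\sum_{m\ls\ell}\norm{\nb^m\alpha}_{L^p}$ and the flat coordinate norms $\sum_{m\ls\ell}\norm{\D^m\alpha}_{L^p}$ requires bounds on the Christoffel symbols of $\gamma$ in the chart, i.e.\ on first derivatives of the metric, which in graph coordinates amounts to a pointwise bound on the second fundamental form $\theta$ (already at order one, $\D_a\alpha_b=\nb_a\alpha_b+\Gamma^c_{ab}\alpha_c$). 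The hypotheses of the lemma give no such bound: the condition $\abs{\N(\bar x_1)-\N(\bar x_2)}\ls\eps_1$ for $\abs{\bar x_1-\bar x_2}\ls\iota_1$ only controls the \emph{oscillation amplitude} of the normal at scale $\iota_1$, not its Lipschitz constant, so $\theta$ (equivalently $\D^2$ of the graph function) may be unbounded; and the comparability of $\gamma(t,\cdot)$ with $\gamma^0$ is a $C^0$ statement. This is not a removable technicality to be ``arranged as in \cite{CL00}'': the whole point of Lemma \ref{lem.CL00lemA.2} is that its constant depends only on $K_1$ and \emph{not} on $\theta$ (contrast Lemma \ref{lem.CL00lemA.7}, where $\norm{\theta}_{L^\infty(\D\Omega)}\ls K$ is assumed separately). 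As written, your chart-by-chart equivalence would smuggle in a curvature bound that is deliberately absent.

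The standard repair, which keeps the rest of your argument intact, is to avoid differentiating tensor components in coordinates altogether: reduce to scalar functions. Apply the first-order Sobolev inequality in your charts to the scalar functions $u_\ell=\abs{\nb^\ell\alpha}$; for scalars no Christoffel symbols enter (only the metric comparability, for the gradient norm and the volume element), and Kato's inequality $\bigl|\D\abs{\nb^\ell\alpha}\bigr|\ls\abs{\nb^{\ell+1}\alpha}$ a.e.\ lets you bound $\norm{\bnb u_\ell}_{L^p}$ by $\norm{\nb^{\ell+1}\alpha}_{L^p}$. Iterating this first-order embedding $k$ times gives the first inequality, and combining it with the same reduction in the interpolation/absorption step (or a scaling argument in the rescaled charts, followed by Young's inequality to produce the $\delta$) gives the $L^\infty$ bound. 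With that modification, your covering construction and the $\ell^p$-reassembly of the last paragraph go through as you wrote them.
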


\begin{lemma}[\mbox{\cite[Lemma A.3]{CL00}}] \label{lem.CL00lemA.3}
  With notation as in Lemmas \ref{lem.CL00lemA.1} and \ref{lem.CL00lemA.2}, we have
  \begin{align*}
    \sum_{j=0}^k\norm{\nb^j\alpha}_{L^s(\Omega)}\ls C\norm{\alpha}_{L^q(\Omega)}^{1-a}\left(\sum_{i=0}^m \norm{\nb^i\alpha}_{L^p(\Omega)}K_1^{m-i}\right)^a.
  \end{align*}
\end{lemma}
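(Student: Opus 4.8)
\textit{Proof proposal.} The plan is to carry the inequality over to the flat domain $\idm_t\subset\R^n$ by means of the Lagrangian isometry, rescale so that the boundary geometry sits at unit length scale, invoke a standard Gagliardo--Nirenberg interpolation inequality on the rescaled domain, and then account for the powers of $K_1$ generated by the rescaling. The exponents are the ones fixed in Lemmas \ref{lem.CL00lemA.1} and \ref{lem.CL00lemA.2}: $2\ls p\ls s\ls q\ls\infty$, $a=k/m$, and $m/s=k/p+(m-k)/q$, the last of which is exactly the admissibility relation $1/s=a/p+(1-a)/q$.

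First I would reduce to the Euclidean setting. By construction the map $x=f_t(y)$ is an isometry of $(\Omega,g)$ onto $(\idm_t,\delta)$, and the curvature of $g$ vanishes so that $[\nb_a,\nb_b]=0$; writing $w$ for the tensor $\alpha$ expressed in the $x$-coordinates, \eqref{eq.covtensor} applied inductively shows that $\nb^j\alpha$ is the pull-back of $\D^j w$, and then the pointwise identity of invariant norms \eqref{eq.norminv}, together with $d\mu_g=dx$ under the isometry, gives $\norm{\nb^j\alpha}_{L^t(\Omega)}=\norm{\D^j w}_{L^t(\idm_t)}$ for all $j$ and all $t\in[1,\infty]$. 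Hence it suffices to prove
\[
\sum_{j=0}^k\norm{\D^j w}_{L^s(\idm_t)}\ls C\,\norm{w}_{L^q(\idm_t)}^{1-a}\Big(\sum_{i=0}^m K_1^{m-i}\norm{\D^i w}_{L^p(\idm_t)}\Big)^{a},\qquad a=\tfrac km .
\]
Next, under the hypotheses inherited from Lemma \ref{lem.CL00lemA.2} --- the normal exponential map of $\D\idm_t$ is injective on a collar of width $\gs 1/K_1$, the unit normal varies by at most the fixed constant $\eps_1<2$ over that scale, and the metric is comparable to the initial one --- the dilated domain $U=K_1\,\idm_t$ satisfies a uniform cone (Lipschitz) condition with parameters depending only on $n$ and $\eps_1$, hence admits a bounded Sobolev extension operator, and the Gagliardo--Nirenberg inequality $\sum_{j\ls k}\norm{\D^j\widetilde w}_{L^s(U)}\ls C\big(\sum_{i\ls m}\norm{\D^i\widetilde w}_{L^p(U)}\big)^{a}\norm{\widetilde w}_{L^q(U)}^{1-a}$ holds on $U$ with $C=C(n,m,\eps_1)$ independent of the size of $U$, where $\widetilde w(\widetilde x)=w(\widetilde x/K_1)$. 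Undoing the dilation, $\norm{\D^j\widetilde w}_{L^t(U)}=K_1^{\,n/t-j}\norm{\D^j w}_{L^t(\idm_t)}$ and $K_1\gs1$ give that the left side is $\gs K_1^{\,n/s-k}\sum_{j\ls k}\norm{\D^j w}_{L^s(\idm_t)}$ while the right side is $C\,K_1^{\,a(n/p-m)+(1-a)n/q}\big(\sum_i K_1^{m-i}\norm{\D^i w}_{L^p(\idm_t)}\big)^{a}\norm{w}_{L^q(\idm_t)}^{1-a}$; dividing, the net power of $K_1$ is $(k-ma)+n\big(a/p+(1-a)/q-1/s\big)=0$ precisely because $a=k/m$ and $1/s=a/p+(1-a)/q$. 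This is the claimed Euclidean inequality, and transporting it back through the isometry finishes the proof. The endpoint cases $k=0$ ($s=q$) and $k=m$ ($s=p$) are trivial, so only $0<k<m$ actually invokes the interpolation step.

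The one point that is not routine bookkeeping is the assertion that the Gagliardo--Nirenberg constant on $U$ can be chosen independently of the (possibly large) diameter of $U$. This is where the uniform unit-scale geometry provided by the hypotheses of Lemma \ref{lem.CL00lemA.2} is essential: one establishes the interpolation inequality on balls of unit radius with a universal constant (via the classical $\R^n$ inequality and the extension/cone property), and then patches the pieces together with a locally finite cover, the bounded overlap and the uniform cone aperture keeping the total constant under control. Everything else is the exponent arithmetic displayed above.
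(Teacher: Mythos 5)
The paper does not actually prove this statement---it is recorded verbatim in the appendix as a citation to \cite{CL00}, Lemma A.3---so there is no internal proof to compare against; your argument must be judged on its own, and it is essentially the standard (and correct) proof: pull back through the Lagrangian isometry to the flat domain, rescale by $K_1$ so the collar geometry provided by the hypotheses of Lemma \ref{lem.CL00lemA.2} sits at unit scale, invoke the Gagliardo--Nirenberg inequality with a constant uniform over such unit-scale Lipschitz/cone domains (via extension or a bounded-overlap covering), and observe that the powers of $K_1$ cancel exactly because $a=k/m$ and $m/s=k/p+(m-k)/q$. The only loose ends are routine: the lower-order terms $j<k$ on the left require either the inhomogeneous form of the interpolation inequality on the rescaled domain or a unit-ball H\"older step before patching, and the bookkeeping step $K_1^{-j}\gs K_1^{-k}$ tacitly uses $K_1\gs 1$, both of which are harmless in the regime in which the lemma is applied.
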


\begin{lemma}[\mbox{\cite[Lemma A.4]{CL00}}] \label{lem.CL00lemA.4}
  Suppose that $\iota_1\gs 1/K_1$ and $\alpha$ is a $(0,r)$ tensor. Then
  \begin{align}
    \norm{\alpha}_{L^{np/(n-kp)}(\Omega)} \ls& C\sum_{\ell=0}^k K_1^{k-\ell} \norm{\nb^\ell \alpha}_{L^p(\Omega)}, \quad 1\ls p<\frac{n}{k},\label{eq.A.4.1}\\
    \norm{\alpha}_{L^\infty(\Omega)}\ls &C\sum_{\ell=0}^k K_1^{n/p-\ell} \norm{\nb^\ell \alpha}_{L^p(\Omega)}, \quad k>\frac{n}{p}.\label{eq.A.4.2}
  \end{align}
\end{lemma}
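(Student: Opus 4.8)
The plan is to recognize \eqref{eq.A.4.1}--\eqref{eq.A.4.2} as the Gagliardo--Nirenberg--Sobolev inequalities on $(\Omega,g)$, with the powers of $K_1$ recording the length scale $1/K_1\ls\iota_1$ on which the geometry is controlled. Since by \eqref{metric1} the map $f_t\colon\Omega\to\idm_t\subset\R^n$ is an isometry, the invariant norm $|\nb^\ell\alpha|$ becomes the ordinary $|\D^\ell w|$ and $d\mu_g$ becomes Lebesgue measure, so it suffices to prove, for a tensor field $w$ on the Euclidean domain $\idm_t$,
\begin{align*}
  \norm{w}_{L^{np/(n-kp)}(\idm_t)}&\ls C\sum_{\ell=0}^k K_1^{k-\ell}\norm{\D^\ell w}_{L^p(\idm_t)},\\
  \norm{w}_{L^\infty(\idm_t)}&\ls C\sum_{\ell=0}^k K_1^{n/p-\ell}\norm{\D^\ell w}_{L^p(\idm_t)}.
\end{align*}
First I would put $\rho=1/K_1\ (\ls\iota_1)$ and cover $\idm_t$ by open sets $U_j$ of diameter comparable to $\rho$ with overlap multiplicity bounded by a constant $N=N(n)$: interior sets are balls $B_\rho(x_j)\subset\idm_t$, while near $\D\idm_t$ one uses, by Definition \ref{defn.3.5} with $\iota_1\gs 1/K_1$ (so that $|\N(\bar x_1)-\N(\bar x_2)|\ls\eps_1<2$ on that scale), Euclidean coordinates in which $U_j\cap\idm_t$ is the region above the graph of a function of Lipschitz constant $\ls C\eps_1$. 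Dilating $U_j$ by the factor $K_1=1/\rho$ turns it into a domain of unit size that is a uniform Sobolev extension domain --- the unit ball, respectively a uniformly Lipschitz graph domain.

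On each dilated unit-size domain the classical inequalities $\norm{\tilde w}_{L^{np/(n-kp)}}\ls C\sum_{\ell\ls k}\norm{\D^\ell\tilde w}_{L^p}$ (for $1\ls p<n/k$) and $\norm{\tilde w}_{L^\infty}\ls C\sum_{\ell\ls k}\norm{\D^\ell\tilde w}_{L^p}$ (for $k>n/p$) hold with $C=C(n,k)$. Undoing the dilation via $\tilde w(\tilde x)=w(\tilde x/K_1)$, and using $\norm{\D^\ell\tilde w}_{L^p}=K_1^{n/p-\ell}\norm{\D^\ell w}_{L^p(U_j)}$, $\norm{\tilde w}_{L^{np/(n-kp)}}=K_1^{n/p-k}\norm{w}_{L^{np/(n-kp)}(U_j)}$ (note $n/q=n/p-k$ when $q=np/(n-kp)$), and $\norm{\tilde w}_{L^\infty}=\norm{w}_{L^\infty(U_j)}$, cancellation of the common $K_1$-weights gives the local bounds $\norm{w}_{L^{np/(n-kp)}(U_j)}\ls C\sum_\ell K_1^{k-\ell}\norm{\D^\ell w}_{L^p(U_j)}$ and $\norm{w}_{L^\infty(U_j)}\ls C\sum_\ell K_1^{n/p-\ell}\norm{\D^\ell w}_{L^p(U_j)}$. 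I would then patch them: for the $L^\infty$ estimate take the supremum over $j$; for the other, with $q=np/(n-kp)\gs p$, raise the local bound to the power $q$, sum over $j$, use $\sum_j b_j^{q/p}\ls(\sum_j b_j)^{q/p}$ for $b_j\gs 0$ together with the bounded overlap $\sum_j\norm{\D^\ell w}_{L^p(U_j)}^p\ls N\norm{\D^\ell w}_{L^p(\idm_t)}^p$, take $q$-th roots, and dominate the $\ell^p$-norm of the finite sequence $(K_1^{k-\ell}\norm{\D^\ell w}_{L^p(\idm_t)})_{0\ls\ell\ls k}$ by its $\ell^1$-norm. This yields \eqref{eq.A.4.1}, and the identical scheme yields \eqref{eq.A.4.2}, with a constant depending only on $n$ and $k$ --- in particular not on $\vol\Omega$, since the scaling is entirely absorbed into the powers of $K_1$ and $N$ is dimensional.

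The step I expect to be the main obstacle is the localization: extracting from the single hypothesis $\iota_1\gs 1/K_1$ a covering of $\idm_t$ by regions that become uniform Sobolev extension domains after dilation by $K_1$, with overlap controlled purely by $n$. This is exactly where the geometric meaning of $\iota_1$ in Definition \ref{defn.3.5} --- and the mild, scale-$1/K_1$ regularity it forces on $\D\idm_t$ --- must be used; once that covering is in hand, the remainder is scaling bookkeeping and a routine summation.
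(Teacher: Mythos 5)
This lemma is not proved in the paper at all: it is quoted verbatim from Christodoulou--Lindblad \cite[Lemma A.4]{CL00} in the appendix of ``preliminary results,'' so there is no in-paper argument to compare against. Your scaling/covering proof is the standard one and is, in substance, the argument behind the cited result: localize at the scale $1/K_1\ls\iota_1$, use Definition \ref{defn.3.5} to represent the boundary locally as a Lipschitz graph (so the dilated patches are uniform Sobolev extension domains), apply the classical Gagliardo--Nirenberg--Sobolev and Morrey inequalities on unit-size domains, undo the dilation (your $K_1$-bookkeeping, with $n/q=n/p-k$, is correct), and sum using bounded overlap and $\ell^q\subset\ell^p$ for $q\gs p$; treating the $(0,r)$ tensor componentwise in Euclidean coordinates after transporting by the isometry $f_t$ is also fine. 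The only point to state more carefully is the graph representation: the bound ``Lipschitz constant $\ls C\eps_1$'' is what one gets when $\eps_1$ is small, whereas Definition \ref{defn.3.5} only assumes $0<\eps_1<2$; for $\eps_1$ near $2$ the patch is still a Lipschitz graph domain at scale comparable to $\iota_1$, but the Lipschitz bound (hence the constant $C$ in the lemma) depends on $\eps_1$, which is harmless since $\eps_1$ is a fixed parameter. With that caveat made explicit, your proposal is a correct, self-contained reconstruction of the cited estimate.
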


\begin{lemma}[\mbox{\cite[Lemma A.5]{CL00}}] \label{lem.CL00lemA.5}
  Suppose that $q=0$ on $\D\Omega$. Then
  \begin{align*}
    \norm{q}_{L^2(\Omega)}\ls C(\vol\Omega)^{1/n}\norm{\nb q}_{L^2(\Omega)},\quad
    \norm{\nb q}_{L^2(\Omega)}\ls C(\vol\Omega)^{1/2n}\norm{\Delta q}_{L^2(\Omega)}.
  \end{align*}
\end{lemma}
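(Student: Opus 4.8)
The plan is to reduce both inequalities to the classical Sobolev inequality on $\R^n$ together with H\"older's inequality, using only the hypothesis $q=0$ on $\D\Omega$. Since the $L^2$ norms of $q$, $\nb q$ and $\Delta q$ are geometric invariants (cf. \eqref{eq.norminv}) and the Riemannian volume $\int_\Omega d\mu_g$ is conserved in time (because $\tr h=0$ by incompressibility, so $D_t\,d\mu_g=0$), it is equivalent to carry out the estimates on the Euclidean domain $\idm_t\subset\R^n$, on which $q$ vanishes along $\D\idm_t$ and $\vol\Omega=\vol\idm_t$.

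For the first inequality I would extend $q$ by zero to all of $\R^n$; because $q=0$ on $\D\idm_t$ this produces a function in $W^{1,2}(\R^n)$ whose gradient is supported in $\idm_t$, so no spurious trace term appears. In the case $n=3$ the Gagliardo--Nirenberg--Sobolev inequality gives $\norm{q}_{L^{2n/(n-2)}(\R^n)}\ls C(n)\norm{\nb q}_{L^2(\R^n)}$, while H\"older's inequality on $\idm_t$, with the exponent relation $\tfrac1n+\tfrac{n-2}{2n}=\tfrac12$, yields $\norm{q}_{L^2(\idm_t)}\ls(\vol\idm_t)^{1/n}\norm{q}_{L^{2n/(n-2)}(\idm_t)}$; multiplying the two produces the claim. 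When $n=2$ the critical Sobolev exponent is unavailable, so I would instead invoke the Ladyzhenskaya inequality $\norm{q}_{L^4}^2\ls C\norm{q}_{L^2}\norm{\nb q}_{L^2}$ (again after extension by zero), combine it with $\norm{q}_{L^2(\idm_t)}\ls(\vol\idm_t)^{1/4}\norm{q}_{L^4(\idm_t)}$, and absorb the resulting $\norm{q}_{L^2}$ factor on the left; note $\tfrac1n=\tfrac12$ here, so the power of the volume is consistent.

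For the second inequality the key step is an integration by parts: since $q=0$ on $\D\idm_t$ the boundary term vanishes, and
\begin{align*}
  \norm{\nb q}_{L^2(\idm_t)}^2=\int_{\idm_t}\abs{\nb q}^2\,dx=-\int_{\idm_t}q\,\Delta q\,dx\ls\norm{q}_{L^2(\idm_t)}\,\norm{\Delta q}_{L^2(\idm_t)}.
\end{align*}
Substituting the first inequality to bound $\norm{q}_{L^2(\idm_t)}$ and dividing by $\norm{\nb q}_{L^2(\idm_t)}$ (the case $\nb q\equiv 0$ being trivial, as then $q\equiv 0$) gives the asserted estimate. I do not expect any genuine obstacle here: the only points that need a word of justification are the extension-by-zero (legitimate precisely because $q$ has zero boundary trace) and the $n=2$ endpoint, for which the Ladyzhenskaya substitute is used. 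These are the classical Friedrichs--Poincar\'e inequalities; they are recorded here with explicit volume dependence only because that constant must be tracked through the higher-order energy estimates of Sections~\ref{sec.1order}--\ref{sec.rorder}.
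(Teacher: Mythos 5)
The paper itself contains no proof of this lemma: it is quoted verbatim from \cite{CL00} in the appendix, so there is no in-paper argument to compare against. Your proof is the standard one (extension by zero, Gagliardo--Nirenberg--Sobolev plus H\"older for $n=3$, the Ladyzhenskaya substitute for $n=2$, then integration by parts for the second bound), which is essentially the argument behind \cite[Lemma A.5]{CL00}. The first inequality is handled correctly, including the exponent bookkeeping $\tfrac1n+\tfrac{n-2}{2n}=\tfrac12$ and the justification of the zero-trace extension; the reduction to Eulerian coordinates via \eqref{eq.norminv} is fine (the remark about $D_t\,d\mu_g=0$ is not even needed, since the invariance of the norms and of the volume is pointwise in time).

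One point you pass over silently: your chain $\norm{\nb q}_{L^2}^2\ls\norm{q}_{L^2}\norm{\Delta q}_{L^2}\ls C(\vol\Omega)^{1/n}\norm{\nb q}_{L^2}\norm{\Delta q}_{L^2}$ yields $\norm{\nb q}_{L^2}\ls C(\vol\Omega)^{1/n}\norm{\Delta q}_{L^2}$, i.e.\ the volume power $1/n$, not the power $1/(2n)$ recorded in the statement, so it is not accurate to claim you obtain ``the asserted estimate'' by substitution. In fact no proof could give the recorded power with a universal constant: under the dilation $q_\lambda(x)=q(x/\lambda)$ on $\lambda\idm_t$ the left-hand side scales like $\lambda^{n/2-1}$ while $(\vol)^{1/(2n)}\norm{\Delta q}_{L^2}$ scales like $\lambda^{n/2-3/2}$, so the $1/(2n)$ version fails for large dilates; $1/n$ is the scale-consistent exponent, and your version implies the stated one only when $\vol\Omega\ls 1$. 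The discrepancy is therefore almost certainly a typo in the recorded statement rather than a defect of your argument, and the $1/n$ version suffices for every use made of the lemma here (e.g.\ in Lemma~\ref{lem.7.6}, where constants may depend on $\vol\Omega$); still, you should flag the mismatch explicitly rather than assert agreement with the printed exponent.
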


\begin{lemma}[\mbox{\cite[Lemma A.7]{CL00}}] \label{lem.CL00lemA.7}
  Let $\alpha$ be a $(0,r)$ tensor. Assume that
  $$\vol\Omega \ls V \text{ and  }\norm{\theta}_{L^\infty(\D\Omega)}+1/\iota_0 \ls K,$$
  then there is a $C=C(K,V,r,n)$ such that
  \begin{align}
    &\norm{\alpha}_{L^{(n-1)p/(n-p)}(\D\Omega)} \ls C\norm{\nb \alpha}_{L^p(\Omega)} +C\norm{\alpha}_{L^p(\Omega)},\quad 1\ls p<n,\label{eq.CL00lemA.7.1}\\
    &\norm{\nb^2\alpha}_{L^2(\Omega)} \ls C\left(\norm{\Pi\nb^2\alpha}_{L^{2(n-1)/n}(\D\Omega)} +\norm{\Delta\alpha}_{L^2(\Omega)}+\norm{\nb\alpha}_{L^2(\Omega)}\right). \label{eq.CL00lemA.7.2}
  \end{align}
\end{lemma}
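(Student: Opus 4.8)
The statement is \cite[Lemma A.7]{CL00}, recorded here with the rest of the appendix; I indicate the argument for completeness. The plan is to read \eqref{eq.CL00lemA.7.1} as the sharp trace Sobolev inequality on $\Omega$ and \eqref{eq.CL00lemA.7.2} as an $L^2$ Hessian estimate (``elliptic regularity with a natural boundary correction'') that rests on \eqref{eq.CL00lemA.7.1}; throughout, the bounds $\norm{\theta}_{L^\infty(\D\Omega)}\ls K$ and $1/\iota_0\ls K$ are what keep every geometric error term $\ls C(K)$, while $\vol\Omega\ls V$ enters only through H\"older steps. For \eqref{eq.CL00lemA.7.1} I would first reduce to the scalar case by Kato's inequality $\abs{\nb\abs{\alpha}}\ls\abs{\nb\alpha}$, so it suffices to bound $\norm{f}_{L^q(\D\Omega)}$ for $f=\abs{\alpha}\gs 0$ and $q=(n-1)p/(n-p)$. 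Next I would pick a Lipschitz vector field $X$ on $\Omega$ with $X=N$ on $\D\Omega$, $\abs{X}\ls1$ and $\norm{\dv X}_{L^\infty(\Omega)}\ls C(K)$ --- concretely $X=-\chi(d)\nb d$ with $d=\dist(\cdot,\D\Omega)$ and $\chi$ a cutoff supported in the collar $\set{d<\iota_0}$, where $1/\iota_0\ls K$ guarantees that $d$ is smooth with $\abs{\nb d}=1$ there and that $\abs{\Delta d}\ls C(K)$ (the mean curvature of the level sets). The divergence theorem then gives
\begin{align*}
  \int_{\D\Omega}f^q\,d\mu_\gamma=\int_\Omega\dv(f^q X)\,d\mu_g\ls C(K)\int_\Omega\big(f^{q-1}\abs{\nb f}+f^q\big)\,d\mu_g,
\end{align*}
after which H\"older with exponents $p/(p-1)$ and $p$ --- noting that $(q-1)\,p/(p-1)=np/(n-p)$ is exactly the interior Sobolev exponent, while $q<np/(n-p)$ --- combined with the interior Gagliardo--Nirenberg--Sobolev inequality (cf.\ \eqref{eq.A.4.1}) and one more H\"older step using $\vol\Omega\ls V$ bounds the right-hand side by $C(K,V)\big(\norm{\nb f}_{L^p(\Omega)}+\norm{f}_{L^p(\Omega)}\big)^q$; taking $q$-th roots yields \eqref{eq.CL00lemA.7.1}.

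For \eqref{eq.CL00lemA.7.2} I would use that the induced metric $g$ on $\Omega$ is flat (the curvature vanishes on $\Omega$), so covariant derivatives commute and one has the pointwise divergence identity
\begin{align*}
  \nb_a\big(\nb^b\alpha\cdot\nb^a\nb_b\alpha-\nb^a\alpha\cdot\Delta\alpha\big)=\abs{\nb^2\alpha}^2-\abs{\Delta\alpha}^2 .
\end{align*}
Integrating over $\Omega$ and using the divergence theorem, $\int_\Omega\abs{\nb^2\alpha}^2=\int_\Omega\abs{\Delta\alpha}^2+\int_{\D\Omega}\big(\nb^b\alpha\cdot\nb_N\nb_b\alpha-\nb_N\alpha\cdot\Delta\alpha\big)\,d\mu_\gamma$. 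Splitting $\nb_b=\bnb_b+N_b\nb_N$ on $\D\Omega$, the double-normal Hessian contributions $N^aN^b\nb_a\nb_b\alpha$ cancel between the two boundary terms, leaving $\int_{\D\Omega}\big(\bnb^b\alpha\cdot\nb_N\nb_b\alpha-\nb_N\alpha\cdot\gamma^{ab}\nb_a\nb_b\alpha\big)\,d\mu_\gamma$. Here \eqref{eq.CL004.20} (and its tensor analogue), together with $\abs{\tr\theta}\ls(n-1)\abs{\theta}$, bound $\abs{\gamma^{ab}\nb_a\nb_b\alpha}\ls C\big(\abs{\Pi\nb^2\alpha}+K\abs{\nb\alpha}\big)$ pointwise on $\D\Omega$; and writing $\nb_N\nb_b\alpha=\bnb_b(\nb_N\alpha)-(\nb_bN^a)\nb_a\alpha$ and integrating by parts along the closed surface $\D\Omega$ converts $\int_{\D\Omega}\bnb^b\alpha\cdot\nb_N\nb_b\alpha$ into terms again controlled by $C\int_{\D\Omega}\big(\abs{\Pi\nb^2\alpha}+K\abs{\nb\alpha}\big)\abs{\nb\alpha}$. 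This gives
\begin{align*}
  \int_\Omega\abs{\nb^2\alpha}^2\ls\int_\Omega\abs{\Delta\alpha}^2+C\int_{\D\Omega}\big(\abs{\Pi\nb^2\alpha}+K\abs{\nb\alpha}\big)\abs{\nb\alpha}\,d\mu_\gamma .
\end{align*}
Finally I would estimate $\int_{\D\Omega}\abs{\Pi\nb^2\alpha}\abs{\nb\alpha}$ by H\"older on $\D\Omega$ with exponents $2(n-1)/n$ and $2(n-1)/(n-2)$ followed by \eqref{eq.CL00lemA.7.1} applied to $\nb\alpha$ with $p=2$, and $K\int_{\D\Omega}\abs{\nb\alpha}^2$ by an $\eps$-trace inequality, so that in both cases only a small multiple of $\norm{\nb^2\alpha}_{L^2(\Omega)}^2$ appears on the right; absorbing it into the left-hand side, together with Young's inequality, gives \eqref{eq.CL00lemA.7.2} (for $n\gs3$; $n=2$ is identical with the endpoint exponents).

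The step I expect to be the main obstacle is bookkeeping rather than anything deep: one must verify that every geometric error term --- the Christoffel symbols hidden in covariant differentiation of a rank-$r$ tensor, $\dv X$ for the normal extension, and the $\theta$-factors from the tangential/normal splitting of $\nb^2\alpha$ near $\D\Omega$ --- is bounded by $C(K,r,n)$, which is precisely why both $\abs{\theta}\ls K$ and $1/\iota_0\ls K$ are assumed (the latter so that $\set{d<\iota_0}$ is a genuine tubular neighborhood on which $d$ is smooth), and one must match Lebesgue exponents exactly, since the H\"older step in \eqref{eq.CL00lemA.7.1} and the absorption in \eqref{eq.CL00lemA.7.2} close only on the sharp Sobolev exponents. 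Passing from a scalar to a rank-$r$ tensor merely adds one further layer of such terms (each free index of $\alpha$ meeting $\gamma_a^c\nb_cN=\theta$, resp.\ $N^c\nb_cN=0$ in geodesic coordinates, contributes an extra $C(K)(\abs{\alpha}+\abs{\nb\alpha})$) without altering the structure, which accounts for the dependence of $C$ on $r$.
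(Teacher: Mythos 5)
This lemma is one of the results the paper records verbatim from \cite{CL00} (Lemma A.7 there) without proof, so your argument can only be judged against what a correct proof must contain. Your route to \eqref{eq.CL00lemA.7.1} is the standard one and is sound in outline: Kato's inequality to reduce to a scalar, the flux field $X=-\chi(d)\nb d$ with $\norm{\dv X}_{L^\infty}\ls C(K)$ on a collar of width comparable to $1/K$, the divergence theorem, and H\"older with the matching interior Sobolev exponent $np/(n-p)$. One small debt: you invoke \eqref{eq.A.4.1}, whose constant is phrased in terms of $K_1=1/\iota_1$, whereas the constant in the present lemma is $C(K,V,r,n)$; you still need an interior Sobolev inequality with constant depending only on $K$ and $V$ (obtainable by the same collar construction or an extension argument), and this should be said.

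The genuine gap is in your proof of \eqref{eq.CL00lemA.7.2}, at the pointwise claim $\abs{\gamma^{ab}\nb_a\nb_b\alpha}\ls C\big(\abs{\Pi\nb^2\alpha}+K\abs{\nb\alpha}\big)$ on $\D\Omega$. This is correct for scalars, where it is exactly \eqref{eq.CL004.20}, but it fails for $(0,r)$ tensors with $r\gs 1$, because $\Pi$ projects the free indices of $\alpha$ as well: components of $\nb_a\nb_b\alpha_{c_1\cdots c_r}$ in which some $c_i$ points in the normal direction are invisible in $\Pi\nb^2\alpha$. Locally (flat boundary, $N=e_n$ constant, $\alpha_c=N_c\phi$) one has $\Pi\nb^2\alpha\equiv 0$ while $\gamma^{ab}\nb_a\nb_b\alpha_c=N_c\,\gamma^{ab}\nb_a\nb_b\phi$, which is certainly not bounded by $K\abs{\nb\phi}$; the same normal-index components survive your tangential integration by parts, so the boundary term is not controlled. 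This is not mere bookkeeping: on the unit ball in $\R^3$ take $\alpha_c=x_c\phi$ with $\phi=r^kY_k$ harmonic; then $\Delta\alpha=2\nb\phi$, $\Pi\nb^2\alpha$ on the sphere involves only $\bnb\phi$, and the right-hand side of \eqref{eq.CL00lemA.7.2} grows like $k$ while $\norm{\nb^2\alpha}_{L^2(\Omega)}\sim k^{3/2}$, so an estimate of this shape cannot be proved (and in fact cannot hold) for arbitrary $(0,1)$ tensors with all indices projected as in \eqref{projection}. A correct proof must therefore either restrict to the scalar case --- which is all the applications in this paper require, since \eqref{eq.CL00lemA.7.2} is used for $p$ and $D_tp$ --- or exploit extra structure of $\alpha$ (e.g.\ $\alpha=\nb^{r-2}q$, where the projected object is the full symmetric derivative array and the missing normal components can be recovered from $\Delta$ and tangential derivatives, in the spirit of Lemma \ref{lem.CL00lem5.5} and Lemma \ref{lem.CL00prop5.8}); your argument as written does neither, while your final remark explicitly asserts the tensor case adds only lower-order terms, which is exactly where it breaks.
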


\bigskip

\section*{Acknowledgments}

The authors thank Professor Tao Luo for some helpful discussions. C. Hao was partially supported by National Science
Foundation of China under Grant 11171327 and by the Youth Innovation Promotion Association, Chinese Academy of Sciences.
D. Wang's research was supported in part by the National Science
Foundation under Grant DMS-1312800.


\bigskip


\end{document}